\DeclareMathOperator{\Br}{Br}
\DeclareMathOperator{\codim}{codim}
\DeclareMathOperator{\eff}{eff}
\newcommand{\et}{\textinmath{\'et}}
\newcommand{\ev}{\textinmath{\'ev}}
\DeclareMathOperator{\Det}{Det}
\DeclareMathOperator{\Fr}{Fr}
\DeclareMathOperator{\Gr}{Gr}
\DeclareMathOperator{\Hilb}{Hilb}
\DeclareMathOperator{\Hom}{Hom}
\DeclareMathOperator{\inv}{inv}
\DeclareMathOperator{\Mor}{Mor}
\DeclareMathOperator{\Pic}{Pic}
\DeclareMathOperator{\pr}{pr}
\DeclareMathOperator{\rg}{rg}
\DeclareMathOperator{\Spec}{Spec}
\DeclareMathOperator{\Sym}{Sym}
\DeclareMathOperator{\Tr}{Tr}
\DeclareMathOperator{\Val}{Val}
\DeclareMathOperator{\Vol}{Vol}
\newcommand{\card}{\sharp}
\newcommand{\llog}{\textinmath{\textup{\bf log}}}
\newcommand{\FF}{{\mathbf F}}
\newcommand{\NN}{{\mathbf N}}
\newcommand{\ZZ}{{\mathbf Z}}
\newcommand{\QQ}{{\mathbf Q}}
\newcommand{\RR}{{\mathbf R}}
\newcommand{\RRp}{\mathbf R_{\geq 0}}
\newcommand{\RRpp}{\mathbf R_{>0}}
\newcommand{\KK}{{\mathbf K}}
\newcommand{\LL}{{\mathbf L}}
\newcommand{\Adeles}{{\boldsymbol A}}
\newcommand{\ci}{{\mathit i}}
\newcommand{\dual}{^\vee}
\newcommand{\Aff}{{\mathbf A}}
\newcommand{\PP}{{\mathbf P}}
\newcommand{\exterieur}{{\mathsf{\Lambda}}}
\newcommand{\dega}{\mathop{\widehat\deg}}
\newcommand{\haar}[1]{{\text{d}#1\,}}
\newcommand{\oomega}{{\boldsymbol\omega}}
\newcommand{\ddelta}{{\boldsymbol\delta}}
\newcommand{\mmu}{\boldsymbol\mu}
\date{\today}
\title{Libert\'e et accumulation}
\author{Emmanuel Peyre}
\address{Institut Fourier\\
UFR IM$^2$AG\\
Universit\'e Grenoble Alpes et CNRS\\
BP 74\\ 38402 Saint-Martin d'H\`eres CEDEX\\ France}
\email{Emmanuel\!.Peyre@univ-grenoble-alpes.fr}
\urladdr{http://www-fourier.ujf-grenoble.fr/\~{}peyre}
\thanks{Je remercie l'universit\'e Grenoble Alpes, le CNRS,
  l'Institute for Advanced Studies de l'universit\'e de Bristol
  et le projet ANR GARDIO
  pour leur soutien durant la r\'edaction de cet article}
\begin{document}
%% Including file abstract.tex
\begin{abstract}
Le principe de Batyrev et Manin et ses variantes
donne une interpr\'etation conjecturale 
pr\'ecise pour le terme dominant du nom\-bre de points de hauteur 
born\'ee d'une vari\'et\'e alg\'ebrique dont l'oppos\'e du 
faisceau canonique est suffisamment positif.
Comme l'a clairement montr\'e le contre-exemple de Batyrev et Tschinkel
la mise en \oe uvre de ce principe n\'ecessite l'exclusion de domaines
d'accumulation qui sont le plus souvent d\'etermin\'es en proc\'edant par 
r\'ecurrence sur la dimension de la vari\'et\'e. Cette m\'ethode ne donne
cependant pas de crit\`ere direct permettant de dire si un point
rationnel donn\'e doit \^etre exclu ou pas.
L'ambition de cet article est de d\'efinir une mesure de la libert\'e d'un
point rationnel de sorte que les points d'une libert\'e suffisante
se r\'epartissent effectivement de mani\`ere uniforme sur
la vari\'et\'e, c'est-\`a-dire qu'ils soient distribu\'es
sur l'espace ad\'elique
associ\'e \`a la vari\'et\'e conform\'ement \`a la mesure de distribution
ad\'elique introduite dans un article ant\'erieur de l'auteur.
De ce point de vue, les points assez libres devraient \^etre ceux qui
respectent le principe de Batyrev et Manin.
\end{abstract}
\begin{altabstract}
The principle of Batyrev and Manin and its variants give a 
precise conjectural interpretation for the dominant term
for the number of points of bounded height on an algebraic variety
for which the opposite of the canonical line bundle is sufficiently positive.
As was clearly shown by the counter-example of Batyrev and Tschinkel,
the implementation of this principle requires the exclusion of accumulating
domains which, up to now, are found using an induction procedure on the
dimension of the variety. However this method does not yield a direct
caracterisation of the points to be excluded. The aim of this paper is to
propose a measure of freedom for rational points so that the points with 
sufficiently positive freedon are randomly distributed on the variety according
to a probability measure on the adelic space introduced by the author in
a previous paper. From that point of view the rational points which 
are sufficiently free ought be the ones which respect the principle of Batyrev
and Manin.
\end{altabstract}
\begin{deabstract}
Das Prinzip vom Batyrev und Manin und seine variationen geben eine genaue 
vermutliche Beschreibung des Hauptterms f\"ur die Zahl der rationalen Punkten
mit begrenzter H\"ohe auf eine Mannigfaltigkeit, deren kanonishe Geradeb\"undel
genug negativ ist. Wie das Gegenbeispiel von Batyrev und Tschinkel es erwiesen
hat, die Ausf\"uhrung dieses Prinzip benutzt die Ausschlie\ss ung von 
akkumulierenden Untermengen die, heutzutage, nur durch Induktion 
auf der Dimension herauszufinden sind. 
Das Ziel dieses Artikel ist ein Ma\ss\ f\"ur
die Freiheit rationalen Punkten vorzuschlagen, 
so da\ss\ die Punkten mit eine genugende 
Freiheit zuf\"allig nach einem Wahrscheinlichkeitsma\ss, die
in einem fr\"uheren Artikel des Autors definiert wurde, verteilt sind. 
Mit dieser Hinsicht,
die genugend freie Punkte sollten die Punkte sein, die das Prinzip von Batyrev
und Manin folgen. 
\end{deabstract}
%% End of file abstract.tex
\maketitle
\tableofcontents
%% Including file 01_introduction.tex
\section{Introduction}%
\label{section.introduction}
Le programme d\'evelopp\'e par Y.~I.~Manin et ses collaborateurs
dans les ann\'ees 1980 (\cite{batyrevmanin:hauteur} et \cite{fmt:fano})
a permis une compr\'ehension en profondeur du comportement asymptotique
du nombre de points de hauteur born\'ee sur les vari\'et\'es alg\'ebriques
et en particulier sur les vari\'et\'es dont l'oppos\'e du
faisceau canonique est suffisamment positif, telles que
les vari\'et\'es de Fano.
\par
Un des points-clefs de ce programme est le concept de sous-vari\'et\'e
accumulatrice \cite[p.~30]{batyrevmanin:hauteur}, qu'on peut d\'ecrire,
au sens le plus faible, comme
une sous-vari\'et\'e stricte de la vari\'et\'e consid\'er\'ee
dont la contribution au nombre total de points sur la vari\'et\'e
n'est pas n\'egligeable. Lorsque la hauteur choisie n'est pas relative
\`a l'oppos\'e du faisceau canonique, la vari\'et\'e peut \^etre r\'eunion
de telles sous-vari\'et\'es, c'est ainsi le cas pour le produit de
deux droites projectives, cette situation est d\'ecrite plus
en d\'etail par V.~V.~Batyrev et Y.~Tschinkel dans
\cite{batyrevtschinkel:tamagawa}. Supposons maintenant que la hauteur
est relative \`a l'oppos\'e du fibr\'e canonique.
Le contre-exemple de V.~V. Batyrev et Y.~Tschinkel \`a la question
initiale de V.~V. Batyrev
et Y.~I. Manin \cite{batyrevtschinkel:counter} montre que,
m\^eme dans le cas d'une vari\'et\'e de Fano, la r\'eunion
de ces sous-vari\'et\'es peut, en g\'en\'eral, \^etre Zariski dense.
Diff\'erentes \'etudes bas\'ees sur la g\'eom\'etrie
des vari\'et\'es ont r\'ev\'el\'e que cette
situation est en fait tr\`es g\'en\'erale (\cite{lehmanntanimototschinkel}
et \cite{browningloughran:toomany}).
\par
L'esprit du programme de Manin est qu'en dehors de ces sous-vari\'et\'es
accumulatrices, le comportement asymptotique du nombre de points de hauteur 
born\'ee s'interpr\`ete en termes d'invariants globaux de la vari\'et\'e.
Dans \cite{rudulier:surfaces}, C.~Le Rudulier fait sauter un verrou
suppl\'ementaire en d\'emontrant que sur certains
espaces de Hilbert les sous-vari\'et\'es accumulatrices au sens pr\'ec\'edent
sont denses pour la topologie de Zariski, mais le nombre de points
de hauteur born\'ee sur le compl\'ementaire se comporte de la fa\c con
esp\'er\'ee. Comme nous le verrons plus loin, C.~Le Rudulier donne
\'egalement un exemple d'un ensemble mince faiblement accumulateur
qui ne s'obtient pas \`a partir d'une r\'eunion de sous-vari\'et\'es
faiblement accumulatrices.
\par
Dans~\cite[\S3]{peyre:fano} nous allions plus loin 
sur la question de la distribution asymptotique,
en construisant une mesure de probabilit\'e 
sur l'espace ad\'elique  correspondant \`a la vari\'et\'e, 
qui donnerait conjecturalement la distribution asymptotique 
des points de hauteur born\'ee en dehors des sous-vari\'et\'es 
accumulatrices. Notons au passage que pour cette mesure, les points
ad\'eliques d'une sous-vari\'et\'e stricte forment un ensemble de mesure
nulle, il en r\'esulte que cette distribution asymptotique ne saurait \^etre
valable que sur le compl\'ementaire des sous-vari\'et\'es accumulatrices
au sens pr\'ec\'edent.
\par
La m\'ethode d\'ecrite d\`es l'article de V.~V.~Batyrev et Y.~I.~Manin
pour trouver les sous-vari\'et\'es accumulatrices est de proc\'eder
par r\'eccurence sur la dimension des sous-vari\'et\'es. En effet,
si l'on note~$B$ la borne choisie pour la hauteur, le comportement
asymptotique attendu sur une vari\'et\'e~$V$
en dehors des sous-vari\'et\'es accumulatrices
est de la forme $CB^a\log(V)^{b-1}$, o\`u~$a$ et~$b$ ont des interpr\'etations
g\'eom\'etriques. \`A partir de ces invariants g\'eom\'etriques, on
peut donc d\'efinir une notion de sous-vari\'et\'es \emph{g\'eom\'etriquement}
accumulatrices: ce sont celles pour lequel la pr\'ediction pour le
nombre de points de la sous-vari\'et\'e est
au moins du m\^eme ordre de grandeur que
celui pr\'edit pour la vari\'et\'e dans son ensemble.
Cette approche s'est r\'evel\'e tr\`es efficace dans la plupart
des cas connus jusqu'\`a aujourd'hui.
\par
Toutefois, elle a l'inconv\'enient de ne pas donner de crit\`ere permettant
de d\'eterminer directement si un point rationnel donn\'e doit \^etre exclu
du d\'enombrement. Il est alors enrichissant de comparer cette
situation avec l'analogue g\'eom\'etrique. Dans l'\'etude des espaces
de morphismes d'une courbe dans une vari\'et\'e donn\'ee, il convient de
consid\'erer les morphismes tr\`es libres dont les d\'eformations 
permettent de couvrir la vari\'et\'e. Ces morphismes sont ais\'ement
caract\'eris\'es \`a l'aide des pentes de l'image inverse du fibr\'e
tangent sur la courbe. Or la g\'eom\'etrie d'Arakelov et notamment
la th\'eorie des pentes d\'evelopp\'ee par J.-B.~Bost fournissent un analogue
arithm\'etique \`a ces pentes. Toutefois, comme toujours dans
cette traduction, l'invariant obtenu au lieu d'\^etre entier
est r\'eel et on peut donc \^etre confront\'e \`a des situations
o\`u la pente minimale d'une famille de points rationnels tend vers~$0$.
\par
L'ambition de ce texte est donc d'utiliser les pentes de la g\'eom\'etrie
d'Arakelov pour d\'efinir un invariant qui mesure la \emph{libert\'e}
d'un point rationnel. L'invariant que nous construisons est un \'el\'ement de
l'interval r\'eel $[0,1]$. Il fait intervenir le choix
d'une m\'etrique ad\'elique dont on munit la vari\'et\'e, m\'etrique qui
est l'analogue dans notre cadre ad\'elique
d'une m\'etrique riemannienne. Dans les
exemples \'etudi\'es, nous montrons que les points qu'il convient 
d'exclure ont une libert\'e qui tend vers $0$ lorsque leur hauteur
tend vers~$+\infty$, ce qui permet d'\'enoncer une variante de la conjecture
raffin\'ee de Batyrev et Manin qui ne semble pas en contradiction avec les
exemples consid\'er\'es ici.
\par
Certes, il convient de relativiser les progr\`es que permettent
ce nouvel invariant en raison de la difficult\'e de sa d\'etermination
en g\'en\'eral; n\'eanmoins c'est un invariant tr\`es naturel
et les quelques indices qui suivent confirment sa capacit\'e \`a
d\'etecter une obstruction \`a la distribution uniforme des points
rationnels.
\par
Ce texte est organis\'e de la fa\c con  suivante: on commence
par fixer le cadre g\'eom\'etrique, avant de pr\'eciser la notion
de m\'etrique ad\'elique.  Le paragraphe suivant consiste en des rappels sur
les notions de pentes, ce qui nous permet de d\'efinir l'invariant 
qui est l'objet de cet article.
Nous en donnons ensuite quelques propri\'et\'es
\'el\'ementaires 
avant d'introduire une nouvelle variante du principe de Batyrev et Manin, la
formule empirique \eqref{equ.laformule}.
Le reste de l'article est consacr\'e \`a l'application
de la notion de libert\'e \`a divers
exemples ou contre-exemples connus: l'espace
projectif, le produit de vari\'et\'es, les quadriques,
le contre-exemple de V.~V. Batyrev et Y. Tschinkel ainsi
que ceux de C.~Le Rudulier.

Je remercie T.~Browning, \'E.~Gaudron
et D.~Loughran pour plusieures discussions
et suggestions qui m'ont permis d'am\'eliorer ce texte.
%% End of file 01_introduction.tex
%% Including file 02_cadre.tex
\section{Cadre}%
\label{section.cadre}
On fixe un corps de nombres~$\KK$.
Pour toute $\KK$-alg\`ebre commutative~$A$ et toute vari\'et\'e~$V$
sur~$\KK$, on note $V_A$ le produit sch\'ematique $V\times_{\Spec(\KK)}\Spec(A)$ et
$V(A)$ l'ensemble $\Mor_{\Spec(\KK)}(\Spec(A),V)$ des $A$-points de $V$.
\par
On note $\Val(\KK)$ l'ensemble des places du corps de nombres~$\KK$.
Soit~$w$ une place de $\KK$. On note $\KK_w$ le compl\'et\'e de~$\KK$ pour~$w$.
Soit~$v$ la place de~$\QQ$ d\'eduite de~$w$ par restriction, on pose
\[|x|_w=|N_{\KK_w/\QQ_v}(x)|_v\]
pour $x\in \KK_w$. Si $\KK_w$ est isomorphe au corps des complexes,
$|\cdot|_w$ correspond au carr\'e du module, sinon 
l'application $|\cdot|_w$ est une valeur absolue 
repr\'esentant~$w$. Cette normalisation permet d'\'ecrire la formule
du produit pour~$\KK$ sous la forme
\begin{equation}
\label{equ.product}
\prod_{w\in\Val(\KK)}|x|_w=1
\end{equation}
pour $x\in \KK^\times$.
\par
\begin{conv}
Dans la suite, on ne consid\`ere que de \emph{belles}%
\index{Variete=Vari\'et\'e>belle}%
\index{Bonne vari\'et\'e}
% V\'erifier terminologie
vari\'et\'es sur~$\KK$, c'est-\`a-dire des vari\'et\'es projectives
lisses et g\'eom\'etriquement int\`egres sur le corps de nombres~$\KK$.
\end{conv}
%% End of file 02_cadre.tex
%% Including file 03_metrique.tex
\section{M\'etriques ad\'eliques}%
\label{section.metrique}

\subsection{Normes $w$-adiques}%
\label{subsection.normes}
Nous introduisons maintenant une notion de \emph{nor\-me classique}
sur les espaces vectoriels sur les compl\'et\'es du corps de nombres~$\KK$.
\begin{defi}
Soit~$w$ une place de~$\KK$.
Soit~$E$ un espace vectoriel sur le corps compl\'et\'e~$\KK_w$.
Dans ce texte, on appelle \emph{norme $w$-adique}%
\index{Norme>wadique=$w$-adique}%
\index{wadique=$w$-adique>norme=(Norme ---)}
une application $\Vert\cdot\Vert_w:x\mapsto\Vert x\Vert_w$
de~$E$ dans~$\RR$ qui v\'erifie les conditions suivantes
\begin{conditions}
\item On a la relation $\Vert\lambda x\Vert_w=|\lambda|_w\,\Vert x\Vert_w$
pour $\lambda\in \KK_w$ et $x\in E$;
\item Si~$w$ est une place ultram\'etrique, alors
on a l'in\'egalit\'e $\Vert x+y\Vert_w\leq\max(\Vert x\Vert_w,\Vert y\Vert_w)$
pour $x,y\in E$;
\item Si $w$ est une place r\'eelle,
alors $\Vert x+y\Vert_w\leq\Vert x\Vert_w+\Vert y\Vert_w$;
\item Enfin, si $w$ est complexe,
alors $\Vert x+y\Vert_w^{1/2}\leq\Vert x\Vert_w^{1/2}+\Vert y\Vert_w^{1/2}$;
\end{conditions}
Nous dirons que cette norme est \emph{classique}%
\index{Norme>classique}%
\index{Classique (Norme ---)}
si elle v\'erifie en outre les conditions suivantes:
\begin{conditions}
\setcounter{enumi}{1}\renewcommand{\theenumi}{\roman{enumi}$'$}
\item Si $w$ est ultram\'etrique,
l'image de $\Vert\cdot\Vert_w$ est contenue dans l'image
de la valeur absolue $|\cdot|_w$;
\item Si $w$ est r\'eelle, il existe une forme quadratique d\'efinie
positive~$q$
sur~$E$ telle que $\Vert x\Vert_w=\sqrt{q(x)}$;
\item Si $w$ est complexe, il existe une forme hermitienne
d\'efinie positive~$h$ sur~$E$ telle que
$\Vert x\Vert_w=h(x)$.
\end{conditions}
\end{defi}
\begin{rema}
\label{rema.II.3.11}
On notera que pour une place ultram\'etrique~$w$, la donn\'ee
d'une norme classique $\Vert\cdot\Vert_w$ sur un $\KK_w$-espace
vectoriel~$E$ est \'equivalente \`a la donn\'ee
du $\mathcal O_w$-module
\[\mathcal E=\{\,x\in E\mid \Vert x\Vert_w\leq 1\,\},\]
qui est libre de rang la dimension de $E$.
En outre, pour tout $x\in E$, on a dans ce cas l'\'egalit\'e
\[\Vert x\Vert_w=\min\{|\lambda|_w^{-1},\ 
\lambda\in\KK_w\setminus\{0\}\text{ tels que }\lambda x\in\mathcal E\}.\]
\end{rema}

\subsection{Normes ad\'eliques}
Dans ce paragraphe, la lettre~$V$ d\'esigne une \emph{belle}
vari\'et\'e sur~$\KK$.
Du point de vue de la g\'eom\'etrie d'Arakelov, une hauteur
va \^etre d\'efinie \`a partir d'un fibr\'e en droites
muni de normes.
\begin{defi}
Soit~$E$ un fibr\'e vectoriel sur~$V$. On note ${\pi:E\to V}$ son
morphisme structural. Pour toute extension~$\LL$ de~$\KK$
et tout $x\in V(\LL)$, on note~$E(x)$ le $\LL$-espace vectoriel
$\pi^{-1}(x)$.
Soit $w\in\Val(\KK)$. Une \emph{norme $w$-adique}%
\index{Norme>wadique=$w$-adique}
sur~$E$
est une application $\Vert\cdot\Vert_w:x\mapsto \Vert x\Vert_w$%
\glossary{$\Vert\cdot\Vert_w$} de
$E(\KK_w)$ dans $\RRp$ v\'erifiant les conditions suivantes:
\begin{conditions}
\item Pour tout $x\in V(\KK_w)$,
la restriction de $\Vert\cdot\Vert_w$ \`a $E(x)$ est une norme
$w$-adique.
\item Pour tout ouvert~$U$ de~$V$ et toute section~$s$
de~$E$ sur~$U$, l'application de $U(\KK_w)$ dans $\RR$
donn\'ee par $x\mapsto \Vert s(x)\Vert_w$ est continue
pour la topologie $w$-adique.
\end{conditions}
Cette norme $\Vert\cdot\Vert_w$ est dite \emph{classique}%
\index{Norme>wadique=$w$-adique>classique}
si sa restriction \`a $E(x)$ est classique pour tout 
$x\in V(\KK_w)$.
\end{defi}
\begin{exem}
On suppose donn\'e un \emph{mod\`ele}
\index{Modele=Mod\`ele}
de $V$ sur $\mathcal O_w$, c'est-\`a-dire
un sch\'ema $\mathcal V$ projectif et
lisse sur $\mathcal O_w$ muni d'un isomorphisme de $\mathcal V_{\KK_w}$ 
sur $V_{\KK_w}$ et un mod\`ele de $E$
au-dessus de $\mathcal V$, c'est-\`a-dire un fibr\'e vectoriel
$\mathcal E$ au-dessus de $\mathcal V$ muni d'un isomorphisme
de fibr\'es vectoriels de $\mathcal E_{\KK_w}$ sur $E_{\KK_w}$. 
Alors, comme $\mathcal V$ est projective, l'application
naturelle de $\mathcal V(\mathcal O_w)$ dans $V(\KK_w)$ est bijective.
Soit $x\in V(\KK_w)$. Soit $\tilde x$
le point correspondant de $\mathcal V(\mathcal O_w)$.
Alors $\tilde x^*(\mathcal E)$ d\'efinit un sous-$\mathcal O_w$-module
de $E(x)$, que l'on notera $\mathcal E(x)$,
qui est libre de rang la dimension de $E(x)$.
Par la remarque~\ref{rema.II.3.11}, cela d\'efinit une norme
classique sur $E(x)$. On obtient ainsi une norme $w$-adique
classique sur $E$ qu'on dira \emph{associ\'ee au mod\`ele} $\mathcal E$%
\index{Norme>wadique=$w$-adique>associee=associ\'ee \`a un mod\`ele}
de~$E$.
\end{exem}
\begin{defi}\label{defi.II.3.14}
Soit~$E$ un fibr\'e vectoriel au-dessus de~$V$.
Une \emph{norme ad\'elique}%
\index{Norme>adelique=ad\'elique}
sur $E$ est une famille
$(\Vert\cdot\Vert_w)_{w\in\Val(\KK)}$, o\`u
$\Vert\cdot\Vert_w$ est une norme $w$-adique
sur $E$ pour tout $w\in\Val(\KK)$,
de sorte que la condition suivante
soit v\'erifi\'ee: il existe un ensemble fini de places~$S$ et
un mod\`ele~$\mathcal E$ de~$E$ sur l'anneau $\mathcal O_S$ des $S$-entiers tels
que pour toute place $w\in\Val(\KK)\setminus S$, la norme $\Vert\cdot\Vert_w$
soit la norme associ\'ee \`a $\mathcal E_{\mathcal O_w}$.
On dira que cette norme ad\'elique est \emph{classique} si toutes les normes 
$\Vert\cdot\Vert_w$ sont classiques.
\par
Un \emph{fibr\'e vectoriel (\resp\ fibr\'e en droites)
ad\'eliquement norm\'e}%
\index{Fibrevectoriel=Fibr\'e
vectoriel>adeliquementnorme=ad\'eliquement norm\'e}
sur~$V$ est la donn\'ee d'un fibr\'e
vectoriel (\resp\ un fibr\'e en droites)~$E$ au-dessus de $V$, muni d'une 
norme ad\'elique.
Un \emph{fibr\'e ad\'elique}%
\index{Fibr\'e ad\'elique}
d\'esigne, dans ce texte, un fibr\'e ad\'eliquement norm\'e
dont la norme est classique.
\par
Dans ce texte, nous r\'eserverons la terminologie 
\og\emph{m\'etrique ad\'elique
sur $V$}\fg%
\index{Metriqueadelique=M\'etrique ad\'elique}%
\index{Adelique=Ad\'elique>Metrique=M\'etrique (---)}\footnote{Dans des
textes ant\'erieurs, j'ai appel\'e m\'etrique ad\'elique
ce que j'appelle ici plus justement norme ad\'elique 
pour \'eviter des confusions.}
\`a la donn\'ee d'un norme ad\'elique classique sur le fibr\'e tangent $T_V$
de~$V$. 
\end{defi}
\begin{listexems}
\exemple\label{exem.4.4.a}
Le fibr\'e en droites trivial $V\times \Aff^1_\KK$, muni
des normes naturelles ${\Vert\cdot\Vert_w}$
d\'efinies par $(x,\lambda)\mapsto |\lambda|_w$
pour $w\in\Val(\KK)$ est un fibr\'e vectoriel ad\'eliquement norm\'e
qu'on dira \'egalement trivial.
Consid\'erons l'ensemble $\mathcal F$ des familles $(f_w)_{w\in\KK}$,
o\`u $f_w$ est une application continue de $V(\KK_w)$ dans $\RRpp$
pour tout $w\in\Val(\KK)$, et o\`u $f_w$ est
constante de valeur de $1$ en dehors 
d'un ensemble fini de places.
Alors l'application qui \`a un \'el\'ement $(f_w)_{w\in\KK}$
de $\mathcal F$ associe la famille $(f_w\Vert\cdot\Vert_w)_{w\in\Val(\KK)}$
est une bijection de $\mathcal F$ sur les normes ad\'eliques sur le 
fibr\'e en droites trivial.
\exemple\label{exem.4.4.b}
Plus g\'en\'eralement, soit~$L$ un fibr\'e en droites
et soit $(\Vert\cdot\Vert_w)_{w\in\Val(\KK)}$ une norme ad\'elique sur~$L$.
Toute norme ad\'elique sur le fibr\'e $L$ est de la forme
$(f_w\Vert\cdot\Vert_w)_{w\in\Val(\KK)}$ pour une unique famille $(f_w)_{w\in\KK}$
de $\mathcal F$.
Cela d\'ecoule du fait que si $\mathcal E$ et $\mathcal E'$
sont des mod\`eles d'un fibr\'e vectoriel~$E$ sur un anneau de
$S$-entiers $\mathcal O_S$, alors il existe un ensemble fini
de places $S'$ contenant~$S$ de sorte que l'isomorphisme
de $\mathcal E_\KK$ sur $\mathcal E_\KK'$ provienne d'un isomorphisme de
$\mathcal E_{\mathcal O_{S'}}$ sur $\mathcal E'_{\mathcal O_{S'}}$.
\exemple\label{exem.fibre.somme}
Soient~$E$ (\resp~$E'$) un fibr\'e vectoriel sur
la vari\'et\'e~$V$ muni
d'une norme ad\'elique
classique ${(\Vert\cdot\Vert_w)_{w\in\Val(\KK)}}$ 
(\resp ${(\Vert\cdot\Vert'_w)_{w\in\Val(\KK)}}$). 
Alors on d\'efinit une norme ad\'elique classique 
$(\Vert\cdot\Vert''_w)_{w\in\Val(\KK)}$
sur le fibr\'e vectoriel $E\oplus E'$ par les conditions suivantes:
\begin{conditions}
\item Si $w\in \Val(\KK)$ est une place non archim\'edienne, alors on pose
$\Vert (y,y')\Vert''_w=\max(\Vert y\Vert_w,\Vert y'\Vert'_w)$
pour tout $x\in V(\KK_w)$, tout $y\in E(x)$ et tout $y'\in E'(x)$.
\item Si $w$ est une place r\'eelle de $\KK$, alors on pose
$\Vert (y,y')\Vert''_w=(\Vert y\Vert^2_w+{\Vert y'\Vert'_w}^2)^{1/2}$
pour tout $x\in V(\KK_w)$, tout $y\in E(x)$ et tout $y'\in E'(x)$.
\item Si $w$ est une place complexe, alors on d\'efinit
$\Vert (y,y')\Vert''_w=\Vert y\Vert_w+\Vert y'\Vert'_w$
pour tout $x\in V(\KK_w)$, tout $y\in E(x)$ et tout $y'\in E'(x)$.
\end{conditions}
Le fibr\'e vectoriel ad\'eliquement norm\'e ainsi obtenu
sera appel\'e la \emph{somme directe} des fibr\'es vectoriels ad\'eliquement
norm\'es $E$ et $E'$.%
\index{Fibrevectoriel=Fibr\'e
vectoriel>adeliquementnorme=ad\'eliquement norm\'e>somme directe}
\exemple\label{exem.fibre.produit}
Soient~$E$ (\resp~$E'$) un fibr\'e vectoriel sur
la vari\'et\'e~$V$ muni
d'une norme ad\'elique
classique ${(\Vert\cdot\Vert_w)_{w\in\Val(\KK)}}$ 
(\resp ${(\Vert\cdot\Vert'_w)_{w\in\Val(\KK)}}$). 
Alors il existe une unique norme ad\'elique classique 
$(\Vert\cdot\Vert''_w)_{w\in\Val(\KK)}$
sur le fibr\'e vectoriel $E\otimes E'$ qui v\'erifie:
\[\Vert y\otimes y'\Vert''_w=\Vert y\Vert_w\Vert y'\Vert'_w\]
pour $w\in\Val(\KK)$, $x\in V(\KK_w)$, $y\in E(x)$ et $y'\in E'(x)$.
Le fibr\'e vectoriel ad\'eliquement norm\'e ainsi obtenu
sera appel\'e la \emph{produit tensoriel} des fibr\'es vectoriels ad\'eliquement
norm\'es $E$ et $E'$.%
\index{Fibrevectoriel=Fibr\'e
vectoriel>adeliquementnorme=ad\'eliquement norm\'e>produit tensoriel}
\exemple\label{exem.4.4.e}
Soit~$E$ un fibr\'e vectoriel sur~$V$ muni
d'une norme ad\'elique que l'on note ${(\Vert\cdot\Vert_w)_{w\in\Val(\KK)}}$.
Soit~$F$ un sous-fibr\'e de~$E$.
La famille de normes
$(\Vert\cdot\Vert'_w)_{w\in\Val(\KK)}$
sur le fibr\'e vectoriel $F$ d\'efinie par les relations
\[\Vert y\Vert'_w=\Vert y\Vert_w\]
pour $w\in\Val(\KK)$, $x\in V(\KK_w)$ et $y\in F(x)$
est une norme ad\'elique
appel\'ee la \emph{restriction} de la norme ad\'elique de~$E$
\`a~$F$.%
\index{Fibrevectoriel=Fibr\'e
vectoriel>adeliquementnorme=ad\'eliquement norm\'e>restriction}
Si la norme ad\'elique de~$E$ est classique, sa restriction
\`a~$F$ l'est \'egalement.

\exemple\label{exem.4.4.f}
On conserve les notations de l'exemple pr\'ec\'edent.
Soit $w\in\Val(\KK)$, on d\'efinit une application $E/F(\KK_w)\to\RR_+$
par
\[\Vert \overline y\Vert''_w=\inf_{y\in\overline y}\Vert y\Vert_w\]
pour $w\in\Val(\KK)$, $x\in V(\KK_w)$ et $\overline y\in E(x)/F(x)$.
Mais comme cette borne inf\'erieure est atteinte, $\Vert\cdot\Vert''_w$
est une norme sur le fibr\'e $E/F$.
En outre, si $\Vert\cdot\Vert_w$ est d\'efini par un mod\`ele
$\mathcal E$ et la restriction $\Vert\cdot\Vert_w'$ par
un sous-fibr\'e $\mathcal F$, alors la norme $\Vert\cdot\Vert''_w$
est d\'efinie par le quotient $\mathcal E/\mathcal F$.

La norme ad\'elique 
$(\Vert\cdot\Vert''_w)_{w\in\Val(\KK)}$
sur le fibr\'e vectoriel $E/F$
est appel\'ee \emph{norme d\'eduite de la norme ad\'elique de~$E$
par passage au quotient}.%
\index{Fibrevectoriel=Fibr\'e
vectoriel>adeliquementnorme=ad\'eliquement norm\'e>passage au quotient}
Supposons que la norme ad\'elique de~$E$ soit classique. La norme qui s'en 
d\'eduit par passage au quotient
l'est \'egalement. De mani\`ere plus pr\'ecise, soit~$w$
une place de~$\KK$ et soit~$x$ un \'el\'ement de $V(\KK_w)$.
Si $w$ est une place finie, alors le sous-$\mathcal O_w$-module 
de $E(x)/F(x)$
associ\'e \`a $\Vert\cdot\Vert''_w$ est le quotient 
du $\mathcal O_w$-module associ\'e \`a $\Vert\cdot\Vert_w$
par son intersection avec l'intersection avec $F(x)$;
si $w$ est r\'eelle (\resp\ complexe) alors la projection
d\'efinit un isomorphisme d'espace
quadratique (\resp\ hermitien) de l'orthogonal de $F(x)$
sur $E/F(x)$.

\exemple\label{exem.4.4.g}
Soit~$E$ un fibr\'e ad\'elique et~$n$
un entier positif. On d\'efinit
une norme ad\'elique classique sur le fibr\'e $\exterieur^nE$ de fa\c con
\`a obtenir une structure de $\lambda$-anneau sur l'anneau
de Grothendieck des fibr\'es vectoriels munis de normes ad\'eliques
classiques \cite[\S2]{berthelot:lambda}.
En particulier, si $E$ est une somme directe $F\oplus F'$,
les normes choisies doivent \^etre compatibles avec
l'isomorphisme canonique de $\exterieur^nE$ sur
\[\bigoplus_{p+q=n}\exterieur^pF\otimes\exterieur^qF'.\]
Soit $w$ une place de $K$ et $x\in V(\KK_w)$
si $w$ est non-archim\'edienne, la restriction de $\Vert\cdot\Vert_w$
\`a $\exterieur^nE(x)$ est donn\'e par le $\mathcal O_w$-module engendr\'e
par les produits $y_1\wedge\dots\wedge y_n$, o\`u $y_i\in E(x)$ avec
$\Vert y_i\Vert_w\leq 1$ pour $i\in\{1,\dots,n\}$.
Si $w$ est une place archim\'edienne, alors on prend
la norme associ\'ee \`a la forme bilin\'eaire caract\'eris\'ee
par la formule
\[\langle x_1\wedge\dots\wedge x_n,y_1\wedge\dots\wedge y_n
\rangle_{\exterieur^n E}
=\det(\langle x_i,y_i\rangle_E),\]
o\`u $x_1,\dots,x_n,y_1,\dots,y_n\in E(x)$ et $\langle\cdot,\cdot\rangle_E$
d\'esigne la forme d\'efinissant la norme sur $E(x)$.
En particulier, si $(e_1,\dots,e_r)$ est une base orthonorm\'ee
de $E(x)$, alors les produits $e_{i_1}\wedge\dots\wedge e_{i_n}$
avec $1\leq i_1<i_2<\dots<i_n\leq r$ forment une base orthonorm\'ee
de $\exterieur^nE(x)$.
En prenant pour~$n$ le rang de $E$, on munit ainsi le fibr\'e 
$\det(E)=\exterieur^nE$
d'une structure de fibr\'e en droite ad\'eliquement norm\'e.
En particulier, une m\'etrique ad\'elique sur~$V$ permet de d\'efinir
une norme ad\'elique classique sur
le fibr\'e anticanonique $\omega_V^{-1}=\det(T_V)$.

\exemple\label{exem.4.4.h}
Soit~$X$ une belle vari\'et\'e sur~$K$ et soit $f:X\to Y$
un morphisme de vari\'et\'es. Soit~$E$ un fibr\'e vectoriel 
au-dessus de~$Y$ muni d'une norme ad\'elique 
$(\Vert\cdot\Vert_w)_{w\in\Val(\KK)}$.
Soit $x\in X(\KK)$. On peut identifier $f^*(E)(x)$ avec
$E(f(x))$. Pour tout place~$w$ de~$\KK$, la norme $\Vert\cdot\Vert_w$
sur $E(f(x))$ induit donc une norme sur $f^*(E)(x)$. Le fibr\'e
vectoriel $f^*(E)$ muni de la norme ad\'elique ainsi d\'efinie
est appel\'e l'\emph{image inverse par~$f$} du fibr\'e vectoriel
ad\'eliquement norm\'e~$E$.%
\index{Imageinverse=Image inverse>dunfibreadeliquementnorme=d'un fibr\'e 
ad\'eliquement norm\'e}%
\index{Fibrevectoriel=Fibr\'e
vectoriel>adeliquementnorme=ad\'eliquement norm\'e>image inverse}
\end{listexems}

\begin{defi}\label{defi.II.3.16}
Soient~$E$ et~$F$ des fibr\'es ad\'eliquement norm\'es
au-dessus de~$V$. On note $(\Vert\cdot\Vert_w)_{w\in\Val(\KK)}$ 
(\resp\ $(\Vert\cdot\Vert'_w)_{w\in\Val(\KK)}$) la norme ad\'elique
sur~$E$ (\resp\ $F$).
\par
Un \emph{morphisme de~$E$ dans~$F$} est
un morphisme~$\varphi$ de fibr\'es
vectoriels de~$E$ dans~$F$.
\par
Un \emph{plongement de~$E$ dans~$F$} est un morphisme~$\varphi$ de fibr\'es
vectoriels de~$E$ dans~$F$ tel que pour toute place~$w$
de~$\KK$, tout point~$x$ appartenant \`a~$V(\KK_w)$ et tout
$y\in E(x)$, on ait $\Vert\varphi(y)\Vert'_w=\Vert y\Vert_w$.%
\index{Fibrevectoriel=Fibr\'e
vectoriel>adeliquementnorme=ad\'eliquement norm\'e>plongement}%
\index{Plongement>defibresvectorielsadeliquementnormes=de
  fibr\'es vectoriels ad\'eliquement norm\'es}
\par
Les fibr\'es vectoriels ad\'eliquement norm\'es~$E$ et~$F$
seront dits \emph{\'equivalents}%
\index{Fibrevectoriel=Fibr\'e
vectoriel>adeliquementnorme=ad\'eliquement 
norm\'e>equivalence=\'equivalence}
si et seulement s'il existe
un isomorphisme de fibr\'es vectoriels $\varphi:E\to F$
et une famille $(\lambda_w)_{w\in\Val(\KK)}$ de nombres r\'eels
tels que
\begin{conditions}
\item L'ensemble $\{\,w\in\Val(\KK)\mid \lambda_w\neq 1\,\}$
est fini;
\item Le produit $\prod_{w\in\Val(\KK)}\lambda_w$ vaut $1$;
\item On a la relation $\Vert\varphi(y)\Vert'_w=\lambda_w\Vert y\Vert_w$
pour tout $w\in\Val(\KK)$, tout $x\in V(\KK_w)$ et tout $y\in E(x)$.
\end{conditions}
L'ensemble des classes d'\'equivalence de fibr\'es
en droites munis d'une norme ad\'elique pour la relation d'\'equivalence
pr\'ec\'edente forme un groupe pour le produit tensoriel,
qu'on note $\mathcal H(V)$.%
\glossary{$\mathcal H(V)$}
\end{defi}

\begin{exem}
Dans le cas o\`u $V=\Spec(\KK)$, la donn\'ee d'un fibr\'e
ad\'elique sur~$V$ est \'equivalente \`a celle d'un
$\KK$-espace vectoriel de dimension finie~$n$
muni d'une famille de normes classiques
$(\Vert\cdot\Vert_w)_{w\in\Val(\KK)}$
de sorte que pour toute base $(e_1,\dots,e_n)$ de~$E$, il existe
un ensemble fini de places $S$ tel que
\[\Bigl\Vert \sum_{i=1}^nx_ie_i\Bigr\Vert_w=\max_{1\leq i\leq n}(|x_i|_w)\]
pour toute place~$w$ en dehors de~$S$ et
tout $(x_1,\dots,x_n)\in \KK_w^n$.
L'ensemble~$M$ des \'el\'ements~$x$ de~$E$ tels que
$\Vert x\Vert_w\leq 1$ pour toute place finie~$w$ de~$K$
est alors un $\mathcal O_\KK$-module projectif de rang constant~$n$.
Notons qu'inversement $\Vert\cdot\Vert_w$ est la norme $w$-adique
d\'efinie par $M\otimes_{\mathcal O_K}\mathcal O_w$, si bien que
la donn\'ee d'un fibr\'e ad\'elique sur~$\Spec(\KK)$ est \'egalement
\'equivalente \`a celle d'un \emph{r\'eseau ad\'elique sur~$\KK$}, 
c'est-\`a-dire
d'un $\mathcal O_\KK$-module projectif~$M$
de rang fini constant muni, pour tout place 
archim\'edienne $w$ d'une norme classique $\Vert\cdot\Vert_w$ sur
$M_w=M\otimes_{\mathcal O_K}\KK_w$.

En particulier, un fibr\'e en droites sur~$\Spec(\KK)$
muni d'une norme ad\'elique ad\'elique est
d\'efini par un $\KK$-espace vectoriel~$E$ de dimension un
muni d'une famille de normes $(\Vert\cdot\Vert_w)_{w\in\Val(\KK)}$
de sorte que pour tout \'el\'ement non nul~$e$ de~$E$
l'ensemble de places  ${\{w\in\Val(\KK)\mid\,\Vert e\Vert_w\neq 1\}}$
soit fini. Par la formule du produit~\eqref{equ.product}, le produit 
$\prod_{w\in\Val(\KK)}\Vert e\Vert_w$ est ind\'ependant de l'\'el\'ement
non nul~$e$ de~$E$ et on peut d\'efinir le \emph{degr\'e} de $E$
par la formule
\begin{equation}
  \label{equa.pente.droite}
  \dega(E)=-\log\Biggl(\prod_{w\in\Val(\KK)}\Vert e\Vert_w\Biggr).
\end{equation}
\glossary{$\dega(E)$}%
\index{Degre=Degr\'e>adelique=ad\'elique}%
Supposons que le degr\'e de~$E$ soit \'egal \`a~$0$. Soit~$e$
une \'el\'ement non nul de~$E$ et posons 
$\lambda_w=\Vert e\Vert_w^{-1}$ pour $w\in\Val(\KK)$. Alors l'application
lin\'eaire $\varphi: \KK\to E$ qui envoie $1$ sur $e$ d\'efinit un
isomorphisme de fibr\'e ad\'elique du fibr\'e ad\'elique trivial
(exemple~\ref{exem.4.4.a}) sur~$E$ muni de la norme ad\'elique
$(\lambda_w\Vert\cdot\Vert_w)_{w\in\Val(\KK)}$. Par cons\'equent,
le degr\'e d\'efinit un isomorphisme de groupes
de $\mathcal H(\Spec(\KK))$ sur~$\RR$.
\end{exem}
\begin{prop}
  \label{prop.maj.morph}
  Soient~$E$ et~$F$ des fibr\'es vectoriels ad\'eliquement norm\'es
  au-dessus de la vari\'et\'e~$V$
  et soit~$\varphi$ un morphisme de~$E$ dans~$F$.
  On note ${(\Vert\cdot\Vert_w)_{w\in\Val(\KK)}}$ 
  (\resp\ ${(\Vert\cdot\Vert'_w)_{w\in\Val(\KK)}}$) la norme ad\'elique
  sur~$E$ (\resp\ $F$).  Alors il existe une
  famille $(\lambda_w)_{w\in\Val(\KK)}$ de nombres r\'eels
  tels que
  \begin{conditions}
  \item L'ensemble $\{\,w\in\Val(\KK)\mid \lambda_w\neq 1\,\}$
    est fini;
  \item On a la relation $\Vert\varphi(y)\Vert'_w\leq\lambda_w\Vert y\Vert_w$
    pour tout $w\in\Val(\KK)$, tout $x\in V(\KK_w)$ et tout $y\in E(x)$.
  \end{conditions}
\end{prop}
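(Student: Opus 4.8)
The plan is to reduce the global statement to a finite verification at a bad set of places together with an automatic bound at almost all places. First I would choose, by the definition of adelic norm (Definition~\ref{defi.II.3.14}), a finite set of places $S_0$ and models $\mathcal E$, $\mathcal F$ of $E$ and $F$ over $\mathcal O_{S_0}$ such that for $w\notin S_0$ the norms $\Vert\cdot\Vert_w$ and $\Vert\cdot\Vert'_w$ are the ones associated to $\mathcal E_{\mathcal O_w}$ and $\mathcal F_{\mathcal O_w}$. Since $\varphi\colon E\to F$ is a morphism of vector bundles over $V$, after possibly enlarging $S_0$ to a finite set $S$ I may assume that $\varphi$ extends to a morphism of $\mathcal O_S$-bundles $\tilde\varphi\colon\mathcal E_{\mathcal O_S}\to\mathcal F_{\mathcal O_S}$ (generic fibres of coherent sheaves spread out over an open subscheme of $\Spec\mathcal O_\KK$, and a morphism of bundles is a section of an internal $\mathcal Hom$ bundle, which again spreads out). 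The point of this extension is that for every $w\notin S$ and every $\tilde x\in\mathcal V(\mathcal O_w)$ the map $\tilde\varphi$ carries $\tilde x^*\mathcal E$ into $\tilde x^*\mathcal F$, i.e.\ it carries $\mathcal E(x)$ into $\mathcal F(x)$; by Remark~\ref{rema.II.3.11} this says exactly that $\Vert\varphi(y)\Vert'_w\le\Vert y\Vert_w$ for all $y\in E(x)$, so we may take $\lambda_w=1$ for all $w\notin S$, which gives condition~(i).

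It remains to produce, for each of the finitely many places $w\in S$, a finite real $\lambda_w$ with $\Vert\varphi(y)\Vert'_w\le\lambda_w\Vert y\Vert_w$ for all $x\in V(\KK_w)$ and all $y\in E(x)$. Fix such a $w$. Because $V$ is projective, $V(\KK_w)$ is compact for the $w$-adic topology; cover it by finitely many affine opens $U$ over which both $E$ and $F$ are trivialised, say $E|_U\cong U\times\KK_w^r$ and $F|_U\cong U\times\KK_w^s$, so that $\varphi$ becomes an $s\times r$ matrix $M(x)$ with entries regular functions on $U$. In the chosen frames the quantities $x\mapsto\Vert e_i(x)\Vert_w$ (for the frame vectors $e_i$ of $E$) and the analogous ones for $F$ are continuous and strictly positive on $U(\KK_w)$ by the continuity axiom in the definition of a $w$-adic norm on a bundle; using the (ultrametric, resp.\ triangle, resp.\ $1/2$-triangle) inequality one bounds $\Vert\varphi(y)\Vert'_w$ on $U$ by $C_U(x)\,\Vert y\Vert_w$ with $C_U$ a continuous function of $x$ built from the norms of the frame vectors and the absolute values of the entries of $M(x)$. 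On the compact piece $\overline{U}\subset V(\KK_w)$ cut out in the cover, $C_U$ is bounded; taking the maximum over the finitely many $U$ in the cover yields a finite $\lambda_w$ valid on all of $V(\KK_w)$. Combining the two cases establishes conditions~(i) and~(ii).

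The routine but slightly delicate point is the homogeneity/matching of the local estimate with the scaling axiom: one must check that the bound $\Vert\varphi(y)\Vert'_w\le C_U(x)\Vert y\Vert_w$ is compatible with $\Vert\lambda y\Vert_w=|\lambda|_w\Vert y\Vert_w$, which it is because $C_U(x)$ depends only on $x$ and not on $y$, so it suffices to bound $\Vert\varphi(\cdot)\Vert'_w$ on the ``unit sphere'' in each fibre — and by axiom~(i) that sphere is non-empty and the supremum over it is finite by continuity and compactness. The genuinely substantive step, and the one I expect to be the main obstacle, is the spreading-out argument in the first paragraph: one has to be careful that enlarging $S$ so that \emph{both} the models and the morphism $\varphi$ are simultaneously defined, and so that $\tilde\varphi$ respects the integral structures at all remaining places, is legitimate; this is where one uses that $\mathcal V$ is projective over $\mathcal O_S$ (so $\mathcal V(\mathcal O_w)\to V(\KK_w)$ is a bijection, as recalled in the example following Definition~\ref{defi.II.3.14}) together with finite presentation of the sheaves involved. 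Everything else is a compactness argument.
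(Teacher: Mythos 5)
Your proof is correct and follows essentially the same route as the paper: spreading $\varphi$ out to a morphism of the models over $\mathcal O_S$ gives $\lambda_w=1$ at almost all places, and a compactness argument at the finitely many remaining places supplies the constants $\lambda_w$ there. The only cosmetic difference is that the paper packages the local step via the compactness of $\PP(E)(\KK_v)$ (bounding the ratio $\Vert\varphi(y)\Vert'_v/\Vert y\Vert_v$ on the projectivized bundle), whereas you use local trivializations of $E$ and $F$ over a cover of $V(\KK_w)$ together with homogeneity, which amounts to the same compactness-plus-continuity argument.
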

\begin{proof}
  Soit $\PP(E)$ le fibr\'e projectif sur~$V$ correspondant aux
  droites des fibres de~$E$. Pour tout $v\in T$, on consid\`ere l'application
  $\rho_v:\PP(E)(\KK_v)\to\RRpp$ d\'efinie par
  la relation $\Vert\phi(y)\Vert'_v=\rho_v(x)\Vert y\Vert_v$ pour
  tout $x\in \PP(E)(\KK_v)$ et tout $y$ non nul appartenant \`a la droite
  correspondante de $E$.
  Comme $\PP(E)(\KK_v)$ est compact, l'application~$\rho_v$ est major\'ee et
  atteint sa borne sup\'erieure qu'on note $\lambda_v$. Pour presque
  toute place finie~$v$ de~$\KK$, le morphisme~$\phi$ est induit
  par un morphisme de $\mathcal E$ vers $\mathcal F$, pour un
  mod\`ele $\mathcal E$ (\resp~$\mathcal F$) de~$E$ (\resp~$F$)
  d\'efinissant la norme consid\'er\'ee. Pour toute telle place,
  on a $\lambda_v\leq 1$. Cela prouve la proposition.
\end{proof}
\begin{nota}
  \label{nota.norme}
  Sous les hypoth\`eses de la proposition, pour toute place~$v$ de~$\KK$
  et tout~$x$ de $V(K_v)$, on note $\vvert\varphi_x\vvert_v$
  le plus petit nombre r\'eel $\lambda\geq 0$ tel que
  \[\Vert \varphi(y)\Vert_v\leq\lambda \Vert y\Vert_v\]
  pour $y\in E(x)$. Si~$x$ appartient \`a $V(\KK)$, on pose
  \'egalement
  \[\vvert\varphi_x\vvert=
  \prod_{v\in\Val(\KK)}\vvert\varphi_x\vvert_v.\]
\end{nota}
\subsection{Hauteurs d'Arakelov}
\label{subsection.hauteur}
Rappelons maintenant comment un fibr\'e en droites
muni d'une norme ad\'elique permet
de d\'efinir une hauteur sur une vari\'et\'e.

\begin{defi}
Soit~$\KK$ un corps de nombres et~$V$ une belle vari\'et\'e sur~$\KK$.
On appelle \emph{hauteur d'Arakelov}%
\index{Hauteur>darakelov=d'Arakelov}
sur~$V$ un fibr\'e en droites sur~$V$ muni d'une norme ad\'elique.
Soit~$L$ une hauteur d'arakelov sur~$V$ et $x\in V(\KK)$ un point rationnel
de~$V$. L'image inverse $x^*(L)$ de~$L$ par~$x$ (exemple~\ref{exem.4.4.h})
d\'efinit un \'el\'ement du groupe $\mathcal H(\Spec(\KK))$. La 
\emph{hauteur logarithmique de~$x$ relativement \`a~$L$}%
\index{Hauteur>danslecasarithm\'etique=dans le cas arithm\'etique}
est d\'efinie par
\[h_L(x)=\dega(x^*(L)).
\glossary{$h_L$}\]
La \emph{hauteur exponentielle de~$x$ relativement \`a~$L$}%
\index{Hauteur>exponentielle}
est d\'efinie par
\[H_L(x)=\exp(h_L(x)).
\glossary{$H_L$}\]
\end{defi}
\begin{exem}
  \label{exem.proj.naif}
  Munissons $\RR^{n+1}$ de sa structure euclidienne usuelle
  et, pour un nombre premier $p$, l'espace vectoriel
  $\QQ_p^{n+1}$ de la norme usuelle:
  \[\Vert (x_0,\dots,x_n)\Vert_p=\max_{0\leq i\leq n}(|x_i|_p).\]
  En identifiant l'ensemble des points de l'espace projectif
  sur un corps $\KK$ \`a celui des droites de $\KK^{n+1}$,
  l'espace tangent en la droite $L$ peut s'identifier
  au quotient $L\dual\otimes\KK^{n+1}/L\dual\otimes L$.
  Les normes pr\'ec\'edentes induisent par restriction,
  produit tensoriel et quotient une m\'etrique ad\'elique
  sur l'espace projectif, et donc une norme ad\'elique
  sur $\omega_{\PP^n_\QQ}^{-1}$. La hauteur correspondante
  est donn\'ee par
  \[H_n(x)=\dega(T_x\PP^n(\QQ))={\sqrt{\sum_{i=0}^nx_i^2}\rlap{.}}^{\,n+1}\]
  si $x=(x_0:\dots:x_n)$ avec $x_0,\dots,x_n$ des entiers relatifs
  premiers entre eux dans leur ensemble.
\end{exem}

%% End of file 03_metrique.tex
%% Including file 04_pentes.tex
\section{Pentes \`a la Bost}%
\label{section.pentes}
\subsection{Pentes d'un r\'eseau ad\'elique}
Nous allons tout d'abord rappeler la d\'efinition des pentes
arithm\'etiques pour un r\'eseau ad\'elique.
\begin{defi}
Soit~$E$ un $\KK$-espace vectoriel~$E$
muni d'une part d'un $\mathcal O_\KK$-sous-module $M$ projectif de rang
constant \'egal \`a la dimension de~$E$ et, pour toute place 
archim\'edienne~$w$ de~$\KK$, d'une norme classique $\Vert\cdot\Vert_w$
sur $E\otimes_\KK\KK_w$. Soit $n$ la dimension de~$E$, la construction 
de l'exemple \ref{exem.4.4.g}
munit alors $\det(E)=\exterieur^nE$ d'une norme ad\'elique
et on d\'efinit le \emph{degr\'e arithm\'etique de~$E$} par la relation
\[\dega(E)=\dega(\det(E)),\]
o\`u la d\'efinition dans le cas de la dimension~1 est donn\'ee
par la formule~\eqref{equa.pente.droite}.
Par d\'efinition, le degr\'e arithm\'etique de l'espace vectoriel
nul est~$0$.
\end{defi}
\begin{remas}
Ce degr\'e peut \'egalement \^etre d\'ecrit de la fa\c con suivante:
les normes d\'efinissent une norme euclidienne sur le $\RR$-espace vectoriel
$E_\RR=E\otimes_\QQ\RR$ qui est isomorphe \`a la somme de $\RR$-espaces 
vectoriels $\bigoplus_{w\mid\infty}E\otimes_\KK\KK_w$. Notons $\iota$ le plongement
canonique de $E$ dans $E_\RR$.
Le module~$\iota(M)$ est un r\'eseau de $E_\RR$ et le degr\'e est
donn\'e par
\[\dega(E)=-\log(\Vol(E_\RR/\iota(M)))\]
o\`u $\Vol(E_\RR/\iota(M))$ est, par d\'efinition, le volume 
euclidien d'un domaine fondamental
pour le r\'eseau $\iota(M)$.
\end{remas}
\begin{exem}
Pla\c cons-nous dans le cas o\`u $\KK=\QQ$, soit $E$ un $\QQ$-espace vectoriel
muni d'un r\'eseau~$\Lambda$ et d'une norme euclidienne~$\Vert\cdot\Vert$. 
Pour toute droite 
vectorielle~$F$ de~$E$, le degr\'e arithm\'etique
de~$F$ pour la structure induite
est $-\log(\Vert f\Vert)$ o\`u $f$ est un des deux g\'en\'erateurs
de l'intersection $\Lambda\cap F$.
\end{exem}
\begin{defi}
\'Etant donn\'e un r\'eseau ad\'elique $E$ de dimension~$n$, 
le \emph{polyg\^one de Newton}
$\mathcal P(E)$ de~$E$ est l'enveloppe convexe des points $(\dim(F),\dega(F))$
du plan o\`u~$F$ d\'ecrit l'ensemble des sous-espaces vectoriels de~$E$
munis des structures ad\'eliques induites. Le covolume des sous-r\'eseaux
de~$E$ \'etant minor\'e, le polyg\^one est born\'e sup\'erieurement
par le graphe d'une application affine par morceaux et on d\'efinit
une application $m_E$ de $[0,n]$ dans $\RR$ par la relation
\begin{equation}
\label{equ.2}
m_E(x)=\sup\{y\in\RR\mid (x,y)\in\mathcal P(E)\}
\end{equation}
pour tout~$x$ de $[0,n]$. Les \emph{pentes arithm\'etiques}\index{Pentes}
de~$E$ sont alors donn\'ee par
\[\mu_i(E)=m_E(i)-m_E(i-1)\]
pour $i\in\{1,\dots,n\}$. Ce sont les pentes des segments
formant le graphe de $m_E$.
\end{defi}
\begin{listrems}
  \remarque
  Notons que nous avons les in\'egalit\'es
  \[\mu_n(E)\leq \mu_{n-1}(E)\leq\dots\leq \mu_1(E)\]
  et la relation
  \[\sum_{i=1}^n\mu_i(E)=\dega(E).\]
  \remarque
  \label{rema.pente.dualite}
  On a \'egalement la relation $\mu_i(E)=-\mu_{n+1-i}(E\dual)$
  pour $1\leq i\leq\dim(E)$ o\`u~$E\dual$ d\'esigne l'espace
  dual de~$E$ \cite[(3.5)]{bostkuennemann:hermitianI}.
  \remarque
  Dans le cadre du programme de Batyrev et Manin, il est naturel
  d'utiliser des hauteurs qui ne sont pas normalis\'ees, et donc
  ne sont pas invariantes par extension de corps.
  De fa\c con coh\'erente, nous n'utilisons pas les pentes normalis\'ees: contrairement
  \`a Bost \cite[p. 195]{bost:leaves} nous ne divisons pas
  par le degr\'e du corps de nombres. Ces pentes
  ne sont donc pas invariantes par extension de corps.
\end{listrems}
\begin{nota}
  Avec les notations de la d\'efinition pr\'ec\'edente,
  on notera \'egalement $\mu_{\max}(E)$ pour $\mu_1(E)$ et
  $\mu_{\min}(E)$ pour $\mu_n(E)$. La \emph{pente de $E$} est la
  moyenne des pentes: $\mu(E)=\frac{\dega(E)}{\dim(E)}$.
\end{nota}
\begin{listrems}
  \remarque
  La notion de pentes a \'et\'e g\'en\'eralis\'ee par E.~Gaudron
  aux normes non classiques \cite{gaudron:pentes}.
  
  \remarque
  \label{rema.espaces.minima}
  Le $i$-\`eme minima de~$E$, not\'e $\lambda_i(E)$
  peut \^etre d\'efini comme la borne inf\'erieure
  des nombres $\theta\in\RRpp$ tels qu'il existe une famille libre
  $(x_1,\dots,x_i)$ de~$E$ v\'erifiant les in\'egalit\'es
  \[\prod_{w\in\Val(\KK)}\Vert x_i\Vert_w\leq \theta\]
  pour $i\in\{1,\dots,n\}$ (\cf\ \cite{thunder:adelic}).
  Le  th\'eor\`eme de Minkowski permet d'obtenir
  une constante explicite $C_\KK$
  de sorte que
  \[0\leq \log(\lambda_i(E))+\mu_i(E)\leq C_\KK\]
  pour $i\in\{1,\dots, n\}$; cet encadrement
  est donn\'e par E.~Gaudron dans \cite[th\'eor\`eme 5.20]{gaudron:pentes},
  compte tenu du fait que $\Delta(\overline E)=1$ dans
  notre cas, avec une r\'ef\'erence \`a un travail
  de T.~Borek \cite{borek:minima}.
\end{listrems}
\subsection{Pentes d'un point rationnel}
Nous allons maintenant appliquer cette construction aux points rationnels
d'une vari\'et\'e
\begin{defi}
  Soit~$V$ une belle vari\'et\'e de dimension~$n$
  sur le corps de nombres~$\KK$.
  Soit~$E$ un fibr\'e ad\'elique sur la vari\'et\'e~$V$. Soit~$r$
  le rang du fibr\'e~$E$. \'Etant donn\'e
  un point rationnel~$x$ de~$V$, les \emph{pentes de~$x$ relativement \`a~$E$}
  sont donn\'ees par la formule $\mu_i^E(x)=\mu_i(E(x))$ o\`u $i\in\{1,\dots,r\}$.
\par
En particulier, lorsque la vari\'et\'e~$V$ est muni d'une m\'etrique
ad\'elique, les \emph{pentes de $x$} sont les pentes arithm\'etiques 
de l'espace tangent $T_xV$. On les note simplement $\mu_i(x)$
pour $1\leq i\leq n$.
\end{defi}
\begin{listrems}
  \remarque
  La somme $\sum_{i=1}^n\mu_i(x)$ est le degr\'e arithm\'etique de l'espace
  tangent $T_xV$ c'est \`a dire le degr\'e arithm\'etique de la fibre en~$x$ du
  fibr\'e anticanonique, qui n'est rien d'autre que la hauteur logarithmique
  de~$x$ relativement \`a ce fibr\'e.
  
  \remarque
  La d\'efinition des pentes dans le cadre arithm\'etique
  est un analogue de celle introduite pour les fibr\'es vectoriels
  sur les courbes alg\'ebriques.
  Plus pr\'ecis\'ement, soit~$\mathcal C$ une courbe
  projective, lisse et g\'eom\'etriquement
  int\`egre sur un corps~$k$ et soit~$V$ une bonne vari\'et\'e sur~$k$.
  \'Etant donn\'e un morphisme
  de vari\'et\'es~$\varphi$ de~$\mathcal C$ dans~$V$,
  le polyg\^one d'Harder-Narasimhan du fibr\'e vectoriel $\phi^*(TV)$
  est d\'efini comme l'enveloppe convexe des couples $(\rg(F),\deg(F))$
  o\`u~$F$ d\'ecrit l'ensemble des sous-fibr\'es de $\phi^*(TV)$.
  Les pentes d'un point rationnel $\mu_i(x)$ d\'efinies ici correspondent
  donc aux pentes $\mu_i(\phi^*(TV))$. Dans le cas particulier o\`u
  la courbe $\mathcal C$ est la droite projective sur~$k$,
  le fibr\'e $\phi^*(TV)$ est isomorphe \`a une
  unique somme directe
  $\bigoplus_{i=1}^n\mathcal O(a_i)$
  avec $a_1\geq a_2\geq\dots \geq a_n$ et on a les \'egalit\'es
  $\mu_i(\phi^*(TV))=a_i$ pour $1\leq i\leq n$.
\end{listrems}
\begin{nota}
  Avec les notations de la d\'efinition pr\'ec\'edente, on d\'efinit
  \[h(x)=\sum_{i=1}^n\mu_i(x)\qquad\text{et}\qquad H(x)=\exp(h(x))\]
  pour tout point rationel $x$ de $V$.
  La fonction $H:V(\KK)\to\RR$ ainsi d\'efinie est donc une hauteur
exponentielle relative au fibr\'e anticanonique.
\par
On pose \'egalement $\mu_{\min}(x)=\mu_n(x)$,
$\mu_{\max}(x)=\mu_1(x)$ et $\mu(x)=\mu(T_xV)=h(x)/n$.
\par
\end{nota}
\subsection{Libert\'e d'un point rationnel}
Nous allons utiliser le pentes d\'efinies au paragraphe pr\'ec\'edent pour
d\'efinir une notion de libert\'e pour les points rationnels. Cette notion
est inspir\'ee de la notion de courbe tr\`es libre. De mani\`ere plus
pr\'ecise, le fait d'avoir une libert\'e strictement positive pour
un point rationnel correspond au fait d'\^etre tr\`es libre pour un 
morphisme de la droite projective dans la vari\'et\'e. Notons que cette
notion de mesure de la libert\'e, qui, \`a la connaissance de l'auteur,
n'a pas \'et\'e formalis\'e dans le cadre g\'eom\'etrique, 
pourrait \'egalement avoir un int\'er\^et dans ce cadre-l\`a. 
\begin{defi}
Soit~$V$ une belle vari\'et\'e de dimension~$n>0$ sur le corps de nombres~$\KK$.
On suppose que la vari\'et\'e~$V$ est munie d'une m\'etrique ad\'elique.
Pour tout point rationnel~$x$ de~$V$, la libert\'e de~$x$ est le nombre
r\'eel
\[l(x)=\frac{\mu_{\min}(x)}{\mu(x)}\]
si $\mu_{\min}(x)>0$ et vaut~$0$ sinon.
\end{defi}
\begin{listrems}
\remarque Soit~$x$ un point rationnel de~$V$. Rappelons
que $\mu(x)=\frac{h(x)}n$ n'est rien d'autre que la moyenne des pentes.

\remarque Par d\'efinition, $l(x)\in[0,1]$.

\remarque La libert\'e $l(x)$ est nulle si et seulement si la
pente minimale est n\'egative.

\remarque On a l'\'egalit\'e $l(x)=1$ si et seulement toutes
les pentes sont \'egales:
\[\mu_1(x)=\mu_2(x)=\dots=\mu_n(x)=\frac{h(x)}n.\]
Cela revient \`a dire que l'espace tangent $T_xV$ est semi-stable
(\cf \cite[\S 1.2]{bostchen:semistability}). C'est par exemple
le cas sur $\QQ$ si le r\'eseau dans $T_xV$ est engendr\'e par une
base orthonormale.

\remarque La libert\'e d'un point rationnel d\'epend du choix
de la m\'etrique sur~$V$. Nous verrons dans le paragraphe suivant
que cette d\'ependance diminue avec la hauteur du point consid\'er\'e.

\remarque Bien que nos conventions font que les pentes d'un point
ne sont pas stables par extension de corps, la libert\'e d'un point rationnel
est, quant \`a elle, stable par extension de corps.

\remarque
La remarque \ref{rema.espaces.minima} fournit une constante~$C$
de sorte que
\[|\mu_n(x)-\log(\lambda_1(T_xV\dual))|\leq C.\]
Par cons\'equent, \`a un terme born\'e pr\`es,
la condition  $l(x)\leq \epsilon$
pour un point $x\in V(\KK)$ peut \^etre vue comme l'existence
d'un \'el\'ement $y\in T_xV\dual$ tel que
\[\prod_{w\in\Val(\KK)}\Vert y\Vert_w\leq H(x)\rlap{.}^\epsilon \]
\end{listrems}
L'objectif du reste de cet article est de motiver le slogan suivant
\begin{slo}
Les mauvais points d'une vari\'et\'e du point de vue
de l'\'equidistribution
sont ceux dont la libert\'e est r\'eduite.
\end{slo}
%% End of file 04_pentes.tex
%% Including file 05_elementaires.tex
\section{Propri\'et\'es \'el\'ementaires}%
\label{section.elementaires}
\subsection{D\'ependance de la libert\'e en la m\'etrique}
Nous commen\c cons par un r\'esultat \'el\'ementaire
concernant les morphismes de fibr\'es.
\begin{lemm}
  \label{lemm.maj.morph}
Soit~$V$ une belle vari\'et\'e sur le corps de nombres~$\KK$,
Soient~$E$ et~$F$ des fibr\'es vectoriels ad\'eliquement norm\'es
au-dessus de~$V$ et soit~$\varphi$ un morphisme de~$E$ dans~$F$.
Alors, il existe une constante r\'eelle $C\geq 0$
telle que, pour tout point~$x$ de $V(\KK)$ en lequel
l'application induite $\varphi_x:E(x)\to F(x)$ est surjective,
on a
\[m_{F(x)\dual}(i)\leq m_{E(x)\dual}(i)+C\]
pour $i\in\{0,\dots,\rg(F)\}$. En particulier,
\[\mu_{\min}(E(x))\leq\mu_{\min}(F(x))+C.\]
\end{lemm}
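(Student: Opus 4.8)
The statement is a slope inequality for a surjective morphism of adelically normed bundles. The natural strategy is to dualise: a surjection $\varphi_x\colon E(x)\twoheadrightarrow F(x)$ gives an injection $\varphi_x^\vee\colon F(x)^\vee\hookrightarrow E(x)^\vee$, and by Proposition~\ref{prop.maj.morph} applied to the transpose morphism $\varphi^\vee\colon F^\vee\to E^\vee$ of adelically normed bundles over $V$, there is a family $(\lambda_w)_{w\in\Val(\KK)}$, trivial outside a finite set, with $\Vert\varphi^\vee(y)\Vert_w\leq\lambda_w\Vert y\Vert_w$ for all $w$, all $x\in V(\KK_w)$ and all $y\in F(x)^\vee$. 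Set $C=\sum_{w}\log^+\lambda_w$ (a finite constant independent of $x$); I would in fact take $C=\sum_w|\log\lambda_w|$ to be safe. The point is that $\varphi_x^\vee$ then becomes, up to this bounded distortion of norms, a norm-nonincreasing injection, so it can only decrease adelic degrees by a bounded amount.

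Concretely, first I would record the elementary fact that if $W\hookrightarrow W'$ is an injection of adelic lattices such that $\Vert\iota(y)\Vert_w\leq\lambda_w\Vert y\Vert_w$ for all $w$ with $\prod_w\lambda_w$ finite, then for every subspace $G\subseteq W$ one has $\dega(G)\leq\dega(\iota(G))+\sum_w\log^+\lambda_w$: this follows by computing the adelic degree of $\det(G)$ as $-\log\prod_w\Vert e\Vert_w$ for a generator $e$ (formula~\eqref{equa.pente.droite}) and comparing $\Vert e\Vert_w$ with $\Vert\iota(e)\Vert_w\leq(\prod\lambda_w^{(\cdot)})\Vert e\Vert_w$ on the relevant exterior power. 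Apply this with $W=F(x)^\vee$, $W'=E(x)^\vee$, $\iota=\varphi_x^\vee$: for every $i\in\{0,\dots,\rg(F)\}$ and every $i$-dimensional $G\subseteq F(x)^\vee$ realising a vertex of the Newton polygon, $\dega(G)\leq\dega(\varphi_x^\vee(G))+C\leq m_{E(x)^\vee}(i)+C$, since $\varphi_x^\vee(G)$ is an $i$-dimensional subspace of $E(x)^\vee$. Taking the supremum over such $G$ gives exactly $m_{F(x)^\vee}(i)\leq m_{E(x)^\vee}(i)+C$.

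For the last assertion I would pass back through duality on slopes. By Remark~\ref{rema.pente.dualite}, $\mu_{\min}(E(x))=\mu_n(E(x))=-\mu_1(E(x)^\vee)=-\big(m_{E(x)^\vee}(1)-m_{E(x)^\vee}(0)\big)=-m_{E(x)^\vee}(1)$ since $m_{\cdot}(0)=0$, and likewise $\mu_{\min}(F(x))=-m_{F(x)^\vee}(1)$. The inequality $m_{F(x)^\vee}(1)\leq m_{E(x)^\vee}(1)+C$ from the previous paragraph therefore yields $\mu_{\min}(E(x))=-m_{E(x)^\vee}(1)\leq -m_{F(x)^\vee}(1)+C=\mu_{\min}(F(x))+C$, as claimed. (One must check $\rg(F)\geq 1$ for this to be non-vacuous; if $\rg(F)=0$ the statement is empty.)

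The only genuinely delicate point is making the "bounded distortion implies bounded degree loss" lemma precise at the non-archimedean places, where $\lambda_w$ need not lie in the value group $|\KK_w^\times|$ and the norm on $\det$ is defined via the $\mathcal O_w$-module generated by wedges of unit vectors rather than by an orthonormality condition; here one uses Remark~\ref{rema.II.3.11} to compare the lattice $\det\mathcal F(x)$ with $\det\mathcal E(x)$ and bounds the index by $\prod_w\lambda_w^{\dim G}$ at the finitely many bad places, which only changes the constant $C$ by a factor depending on $\rg(F)$ — harmless. Everything else is bookkeeping with~\eqref{equ.2} and the definition of $m_E$. I expect no real obstacle; the proof is short once Proposition~\ref{prop.maj.morph} is invoked for $\varphi^\vee$.
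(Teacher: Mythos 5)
Your argument coincides with the paper's own proof: dualise to get the injection $\varphi_x\dual:F(x)\dual\hookrightarrow E(x)\dual$, control the loss of adelic degree of a $k$-dimensional subspace through the operator norms of $\wedge^k\varphi_x\dual$ (bounded uniformly in $x$ via Proposition~\ref{prop.maj.morph}), deduce $m_{F(x)\dual}(i)\leq m_{E(x)\dual}(i)+C$, and conclude with the duality of slopes from Remark~\ref{rema.pente.dualite}. The only cosmetic difference is that the paper bounds $\log\vvert\wedge^k\varphi\dual_x\vvert$ directly whereas you take $C$ proportional to $\rg(F)\sum_w\log^+\lambda_w$, which, as you note, is harmless.
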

\begin{proof}
  Par dualit\'e, le morphisme $\varphi$ d\'efinit un morphisme
  $\varphi\dual$ de $F\dual$ dans $E\dual$.
  Soit~$x$ un \'el\'ement de $V(\KK)$
  en lequel $\varphi_x$ est surjectif. L'application
  lin\'eaire $\varphi_x\dual$ est injective.
  Pour tout sous-espace~$H$ de $F(x)\dual$
  de dimension~$k$ et tout $y\in\exterieur^kH$,
  en utilisant les notations~\ref{nota.norme}, on a les relations
  \[\Vert\wedge^k\varphi(y)\Vert_w\leq\vvert \wedge^k\varphi\dual_x\vvert_w
  \Vert y\Vert_w\]
  pour $w\in\Val(\KK)$, ce qui implique l'in\'egalit\'e
  \[\dega(\varphi_x\dual(H))\geq \dega(H)
  -\log(\vvert \wedge^k\varphi\dual_x\vvert).\]
  En posant $C_x=\max_{1\leq k\leq \rg(F)}(\log(\vvert \wedge^k\varphi\dual_x
  \vvert))$, on en d\'eduit que
  \[m_{F(x)\dual}(i)\leq m_{E(x)\dual}(i)+C_x\]
  pour $1\leq i\leq\rg(F)$. Compte tenu de la
  proposition~\ref{prop.maj.morph}, il
  existe une constante $C\geq0$ telle que $C_x\leq C$
  pour tout $x\in V(K)$. En particulier, on obtient l'in\'egalit\'e
  \[\mu_1(F(x))\dual)\leq \mu_1(E(x)\dual)+C.\]
  La formule de la remarque~\ref{rema.pente.dualite} donne
  alors l'in\'egalit\'e
  \[\mu_{\min}(E(x))\leq\mu_{\min}(F(x))+C\]
  ce qui permet de conclure.
\end{proof}
\begin{rema}
  Notons que la preuve donne en fait une in\'egalit\'e plus
  pr\'ecise:
  \[\mu_{\min}(E(x))\leq\mu_{\min}(F(x))
  +\max_{1\leq k\leq \rg(F)}\Bigl(
  \frac{\log(\vvert \wedge^k\varphi\dual_x
  \vvert)}k\Bigr).\]
\end{rema}
Comme annonc\'e, nous allons maintenant d\'emontrer que la libert\'e
d\'epend peu de la m\'etrique choisie.
\begin{lemm}
\label{lemm.5.2}
Soit~$V$ une belle vari\'et\'e sur le corps de nombres~$\KK$,
soit~$E$ un fibr\'e vectoriel de rang~$r$ sur~$V$. Donnons-nous
des normes ad\'eliques classiques
${(\Vert\cdot\Vert_w)_{w\in\Val(\QQ)}}$ et ${(\Vert\cdot\Vert'_w)_{w\in\Val(\QQ)}}$
sur~$E$.
On note $\mu_i^E$ et $\mu_i^{E'}$ les fonctions de pentes associ\'ees.
Il existe une constante r\'eelle $C\geq 0$ telle que
\[|\mu_i^E(x)-\mu_i^{E'}(x)|\leq C\]
pour $i\in\{1,\dots,r\}$ et $x\in V(\KK)$.
\end{lemm}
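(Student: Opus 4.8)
The plan is to reduce the statement to two applications of Lemma~\ref{lemm.maj.morph}, using the identity map of~$E$ as the morphism~$\varphi$ but interpreting it as a morphism between two different adelically normed bundles. First I would observe that the identity $\mathrm{id}\colon E\to E$, where the source carries the adelic norm $(\Vert\cdot\Vert_w)_{w}$ and the target carries $(\Vert\cdot\Vert'_w)_{w}$, is a morphism of adelically normed vector bundles in the sense of Definition~\ref{defi.II.3.16} (it is certainly a morphism of vector bundles; the norms need not agree, which is fine since a morphism is not required to be an embedding). Since the identity is fibrewise an isomorphism, hence surjective at every $x\in V(\KK)$, Lemma~\ref{lemm.maj.morph} applies at every rational point and yields a constant $C_1\geq 0$ with
\[m_{(E',x)\dual}(i)\leq m_{(E,x)\dual}(i)+C_1\]
for all $i\in\{0,\dots,r\}$ and all $x\in V(\KK)$, where I write $(E,x)$ for $E(x)$ equipped with the first adelic structure and $(E',x)$ for $E(x)$ equipped with the second. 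Applying the same lemma to the identity viewed the other way round gives a constant $C_2\geq 0$ with $m_{(E,x)\dual}(i)\leq m_{(E',x)\dual}(i)+C_2$.

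Next I would pass from the functions $m_{(\cdot,x)\dual}$ back to the slopes of $E(x)$ and $E'(x)$. By Remark~\ref{rema.pente.dualite} one has $\mu_i(E(x))=-\mu_{r+1-i}(E(x)\dual)=-\bigl(m_{(E,x)\dual}(r+1-i)-m_{(E,x)\dual}(r-i)\bigr)$, and likewise for $E'$. Combining the two inequalities above, for each index $j\in\{0,\dots,r\}$ we get
\[|m_{(E,x)\dual}(j)-m_{(E',x)\dual}(j)|\leq C_0\]
with $C_0=\max(C_1,C_2)$, uniformly in $x\in V(\KK)$. Taking successive differences, $|\mu_i^E(x)-\mu_i^{E'}(x)|=|\mu_{r+1-i}(E(x)\dual)-\mu_{r+1-i}(E'(x)\dual)|$ is bounded by the difference of two consecutive values of $m_{(E,x)\dual}-m_{(E',x)\dual}$, hence by $2C_0$. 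Setting $C=2C_0$ finishes the argument.

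The only genuine point requiring care — and the place I would expect to spend a sentence or two — is the bookkeeping that turns the uniform estimate on the Newton-polygon functions $m_{(\cdot,x)\dual}$ into a uniform estimate on the individual slopes, since a priori controlling consecutive values of a piecewise-affine function controls the slopes only up to a factor; here it works cleanly because we control \emph{every} value $m(j)$ for integer $j$, not merely the endpoints, so each slope $\mu_i=m(i)-m(i-1)$ is a difference of two controlled quantities. Everything else is a direct invocation of Lemma~\ref{lemm.maj.morph} and Remark~\ref{rema.pente.dualite}; no new geometric input is needed, and in particular the classicality hypothesis on the two adelic norms is used only through the inputs to that lemma. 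One should also note, as the statement already hints by quantifying over all $x\in V(\KK)$, that the constant $C$ depends on $E$ and the two norms but not on the point.
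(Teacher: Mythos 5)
Your proposal is correct and follows essentially the same route as the paper, which likewise applies Lemma~\ref{lemm.maj.morph} to the identity endomorphism (in both directions) to bound the differences of the Newton-polygon functions uniformly in $x$ and then deduces the bound on each slope as a difference of consecutive values. The extra bookkeeping you spell out (passing through the duals via Remark~\ref{rema.pente.dualite} and taking successive differences, at the cost of a factor $2$) is exactly what the paper leaves implicit in \og Le lemme en r\'esulte\fg.
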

\begin{proof}
  En appliquant le lemme~\ref{lemm.maj.morph} \`a l'endomorphisme
  identit\'e, on obtient que les
  applications $x\mapsto m'_{E(x)}(i)-m_{E(x)}(i)$
  sont born\'ees. Le lemme en r\'esulte.
\end{proof}
\begin{prop}
\label{prop.5.2}
Soit~$V$ une belle vari\'et\'e sur le corps~$\KK$,
munie de deux m\'etriques. On note~$l$ et~$l'$ les fonctions
libert\'es associ\'ees \`a ces m\'etriques et $h$ la fonction hauteur
d\'efinie par la premi\`ere d'entre elles.
Alors il existe une constante r\'eelle $C>0$ telle que
\[|l(x)-l'(x)|<\frac C{h(x)}\]
pour tout $x\in V(\KK)$ tel que $h(x)>0$.
\end{prop}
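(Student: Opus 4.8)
The plan is to compare the two freedom functions $l$ and $l'$ by controlling separately the numerator $\mu_{\min}$ and the denominator $\mu = h/n$ appearing in the definition $l(x)=\mu_{\min}(x)/\mu(x)$, using the uniform slope comparison already established in Lemma~\ref{lemm.5.2}. Write $\mu_i$ and $\mu_i'$ for the slope functions of the two metrics on $T_V$ (so $\mu_i = \mu_i^{T_V}$ in the notation of Lemma~\ref{lemm.5.2}), and let $C_0\geq 0$ be the constant furnished by that lemma, so that $|\mu_i(x)-\mu_i'(x)|\leq C_0$ for all $i\in\{1,\dots,n\}$ and all $x\in V(\KK)$. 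Summing over $i$ gives $|h(x)-h'(x)|\leq nC_0$, hence also $|\mu(x)-\mu'(x)|\leq C_0$, and in particular $|h'(x)-h(x)|$ and $|\mu_{\min}(x)-\mu_{\min}'(x)|$ are both bounded by $nC_0$.

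First I would dispose of the degenerate cases. If $\mu_{\min}(x)\leq 0$ and $\mu_{\min}'(x)\leq 0$ then $l(x)=l'(x)=0$ and there is nothing to prove. If exactly one of them is nonpositive, say $\mu_{\min}(x)\leq 0<\mu_{\min}'(x)$, then $l(x)=0$ and $0<l'(x)=\mu_{\min}'(x)/\mu'(x)\leq (\mu_{\min}(x)+nC_0)/\mu'(x)\leq nC_0/\mu'(x)$; since $\mu'(x)=h'(x)/n \geq (h(x)-nC_0)/n$, this is $\leq n^2C_0/(h(x)-nC_0)$, which is $<C/h(x)$ for a suitable $C$ once $h(x)$ is larger than, say, $2nC_0$ (the finitely many — indeed, by Northcott, the points with bounded height — remaining points are absorbed by enlarging $C$). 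The symmetric case is identical. So it remains to treat the main case $\mu_{\min}(x)>0$ and $\mu_{\min}'(x)>0$, where both freedoms are genuine ratios.

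In that main case I would estimate directly:
\[
l(x)-l'(x)=\frac{\mu_{\min}(x)}{\mu(x)}-\frac{\mu_{\min}'(x)}{\mu'(x)}
=\frac{\mu_{\min}(x)\bigl(\mu'(x)-\mu(x)\bigr)+\mu(x)\bigl(\mu_{\min}(x)-\mu_{\min}'(x)\bigr)}{\mu(x)\mu'(x)}.
\]
Since $l(x)=\mu_{\min}(x)/\mu(x)\leq 1$, the first term in the numerator is at most $\mu(x)\cdot|\mu'(x)-\mu(x)|\leq \mu(x)C_0$ in absolute value, while the second is at most $\mu(x)\cdot nC_0$; dividing by $\mu(x)\mu'(x)$ gives $|l(x)-l'(x)|\leq (n+1)C_0/\mu'(x)$. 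Finally $\mu'(x)=h'(x)/n\geq (h(x)-nC_0)/n$, so for $h(x)\geq 2nC_0$ one gets $\mu'(x)\geq h(x)/(2n)$ and hence $|l(x)-l'(x)|\leq 2n(n+1)C_0/h(x)$. Taking $C$ to be the maximum of $2n(n+1)C_0$ and a constant large enough to cover the (Northcott-)finite set of points with $0<h(x)<2nC_0$ yields $|l(x)-l'(x)|<C/h(x)$ for all $x$ with $h(x)>0$, with strict inequality obtained by enlarging $C$ slightly if necessary.

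I do not expect a serious obstacle here: all the real content is in Lemma~\ref{lemm.5.2}, and what remains is the elementary algebra of bounding a difference of quotients together with the observation that $h(x)$ and $h'(x)$ differ by a bounded amount, so that the denominator $\mu'(x)$ is comparable to $h(x)/n$ once $h(x)$ is large. The only mildly delicate point is the bookkeeping needed to fold the finitely many small-height points and the case-distinction on the sign of $\mu_{\min}$ into a single clean constant $C$, which is why I invoke Northcott's property of the anticanonical height $h$ to know that $\{x\in V(\KK)\mid 0<h(x)<2nC_0\}$ is finite.
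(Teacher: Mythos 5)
Your argument is correct in substance and follows essentially the same route as the paper: everything rests on Lemma~\ref{lemm.5.2}, then a case distinction on the signs of the minimal slopes, an elementary bound on the difference of the two quotients, and the observation that $h'(x)\geq h(x)-nC_0$ makes the denominator comparable to $h(x)$ once $h(x)$ is large. The one step you should delete is the appeal to Northcott. The proposition makes no positivity assumption on $\omega_V^{-1}$ (the variety is merely \emph{belle}), so the anticanonical height need not satisfy a Northcott property and the set $\{\,x\in V(\KK)\mid 0<h(x)<2nC_0\,\}$ may well be infinite. Fortunately no finiteness is needed: since $l$ and $l'$ take values in $[0,1]$, one has $|l(x)-l'(x)|\leq 1<C/h(x)$ automatically whenever $C>2nC_0\geq h(x)$, so a constant such as $C=2n(n+1)C_0$ (enlarged slightly to make all the inequalities strict) handles the small-height points uniformly, with no counting argument at all. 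This is precisely how the paper disposes of them: it remarks that $|l(x)-l'(x)|\leq 1$ and reduces immediately to the range $h(x)>2C_2$.
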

\begin{proof}
Notons $\mu_i$ (resp. $\mu'_i$) les fonctions de pente
correspondant \`a la premi\`ere m\'etrique (resp. la seconde).
On primera de m\^eme la notation pour la hauteur.
Il r\'esulte du lemme~\ref{lemm.5.2} qu'il existe des constantes
r\'eelles strictement positives $C_1$ et $C_2$ telles que
\[|\mu_n(x)-\mu'_n(x)|\leq C_1\qquad\text{et}\qquad|h(x)-h'(x)|\leq C_2\]
pour $x\in V(\KK)$. Par d\'efinition, on a la majoration
$|l(x)-l'(x)|\leq 1$, il suffit donc de d\'emontrer le r\'esultat lorsque
$h(x)>2C_2$. Notons qu'on a alors les in\'egalit\'es
$h'(x)>h(x)-C_2>h(x)/2$ et,
par cons\'equent, ${1/h'(x)<2/h(x)}$. 
Si $\mu'_n(x)\leq 0$ et $\mu_n(x)\leq 0$, on a $l(x)=l'(x)=0$ ce qui
d\'emontre le r\'esultat. Si $\mu'_n(x)<0$ et $\mu_n(x)>0$,
alors on obtient les relations $l(x)=n\mu_n(x)/h(x)\leq nC_1/h(x)$.
Si $\mu'_n(x)>0$ et $\mu_n(x)<0$, on obtient
de m\^eme $l'(x)\leq 2nC_1/h(x)$. Enfin
si les deux pentes minimales sont strictement positives, 
on a les in\'egalit\'es
\[|l(x)-l'(x)|=n\left|\frac{\mu_n(x)}{h(x)}-\frac{\mu'_n(x)}{h'(x)}\right|
\leq\frac{nC_1}{h(x)}+\frac{n\mu'_n(x)C_2}{h(x)h'(x)}\leq
\frac{nC_1+C_2}{h(x)}.\quad\qed\]
\noqed
\end{proof}
\Subsection{Libert\'e et morphismes de vari\'et\'es}
\begin{prop}
  \label{prop.inegalite.morph.var}
  Soient~$X$ et~$Y$ de belles vari\'et\'es sur~$\KK$
  de dimensions respectives~$m$ et~$n$ et soit
  $\varphi:X\to Y$ un morphisme de vari\'et\'es.
  Alors il existe une constante $C\geq 0$ telle qu'en tout
  $x\in X(\KK)$ en lequel l'application tangente $T_x\varphi$ est
  surjective,
  \[\mu_{\min}(x)\leq\mu_{\min}(\varphi(x))+C.\]
  Si, en outre, $h(x)$ est strictement positif,
  Il en r\'esulte que
  \[l(x)\leq \frac{m\,h(\varphi(x))}{n\,h(x)}l(\varphi(x))+
  \frac{mC}{h(x)}.\]
\end{prop}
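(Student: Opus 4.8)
The plan is to realise the tangent map of~$\varphi$ as a morphism of adelically normed bundles on~$X$ and then invoke Lemma~\ref{lemm.maj.morph}. Concretely, the differential of~$\varphi$ is a morphism of vector bundles $d\varphi\colon T_X\to\varphi^*(T_Y)$ over~$X$, and at a rational point~$x$ the induced linear map $(d\varphi)_x\colon T_xX\to\varphi^*(T_Y)(x)$ is precisely the tangent map $T_x\varphi$ under the canonical identification $\varphi^*(T_Y)(x)\simeq T_{\varphi(x)}Y$. The metric on~$X$ makes $T_X$ an adelically normed bundle, and pulling back the metric on~$Y$ (Example~\ref{exem.4.4.h}) makes $\varphi^*(T_Y)$ one; moreover, by the very definition of the inverse image norm, the identification $\varphi^*(T_Y)(x)\simeq T_{\varphi(x)}Y$ is an isometry at every place~$w$, hence an isomorphism of adelic vector spaces, so that in particular $\mu_{\min}(\varphi^*(T_Y)(x))=\mu_{\min}(\varphi(x))$ and, more generally, $l$ and $h$ computed from either side agree.

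First I would apply Lemma~\ref{lemm.maj.morph} to $E=T_X$, $F=\varphi^*(T_Y)$ and the morphism $d\varphi$. It produces a constant $C\geq 0$ such that at every $x\in X(\KK)$ where $(d\varphi)_x$ is surjective, that is where $T_x\varphi$ is surjective, one has $\mu_{\min}(T_xX)\leq\mu_{\min}(\varphi^*(T_Y)(x))+C$. By the isometry above this reads $\mu_{\min}(x)\leq\mu_{\min}(\varphi(x))+C$, which is the first assertion.

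For the inequality on the freedoms, assume $h(x)>0$ and argue by cases on the sign of $\mu_{\min}(x)$. If $\mu_{\min}(x)\leq 0$, then $l(x)=0$, while the right-hand side is non-negative: the term $mC/h(x)$ is, and $\frac{m\,h(\varphi(x))}{n\,h(x)}\,l(\varphi(x))$ either vanishes (when $l(\varphi(x))=0$) or is positive, since $l(\varphi(x))>0$ forces $\mu_{\min}(\varphi(x))>0$, hence $h(\varphi(x))\geq n\,\mu_{\min}(\varphi(x))>0$. If $\mu_{\min}(x)>0$, then, using $\mu(x)=h(x)/m$, we have $l(x)=m\,\mu_{\min}(x)/h(x)$, and the first assertion gives $l(x)\leq m\,\mu_{\min}(\varphi(x))/h(x)+mC/h(x)$. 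Here, if $\mu_{\min}(\varphi(x))\leq 0$ the first summand is $\leq 0$ and the claim follows from $\frac{m\,h(\varphi(x))}{n\,h(x)}\,l(\varphi(x))=0$; otherwise $l(\varphi(x))=n\,\mu_{\min}(\varphi(x))/h(\varphi(x))$, i.e. $\mu_{\min}(\varphi(x))=h(\varphi(x))\,l(\varphi(x))/n$, and substituting yields exactly $l(x)\leq\frac{m\,h(\varphi(x))}{n\,h(x)}\,l(\varphi(x))+\frac{mC}{h(x)}$.

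The only genuinely delicate point is the bookkeeping of the identification $\varphi^*(T_Y)(x)\simeq T_{\varphi(x)}Y$ as an isometric isomorphism of adelic vector spaces, so that the slopes transported from~$Y$ really are the ones appearing in the definitions of $\mu_{\min}(\varphi(x))$ and $l(\varphi(x))$; once this is in place the statement is a direct combination of Lemma~\ref{lemm.maj.morph} with the elementary case analysis above.
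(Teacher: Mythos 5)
Your proposal is correct and takes essentially the same route as the paper: the first inequality is obtained by applying Lemma~\ref{lemm.maj.morph} to the tangent morphism $T\varphi\colon T_X\to\varphi^*(T_Y)$ (the pullback norm of Example~\ref{exem.4.4.h} making the identification $\varphi^*(T_Y)(x)\simeq T_{\varphi(x)}Y$ isometric), and the second follows from the definition of the freedom. Your case analysis on the signs of $\mu_{\min}(x)$ and $\mu_{\min}(\varphi(x))$ simply spells out what the paper leaves implicit in the phrase \og la seconde de la d\'efinition de la libert\'e\fg.
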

\begin{proof}
  La premi\`ere assertion r\'esulte du lemme~\ref{lemm.maj.morph} appliqu\'e
  au morphisme tangent ${T\varphi:TX\to\varphi^*(TY)}$,
  la seconde de la d\'efinition de la libert\'e.
\end{proof}
\begin{rema}
  \label{rema.morph.liberte}
  Il r\'esulte de cette proposition que, si $y\in Y(\KK)$ et $X_y$
  d\'esigne la fibre de~$\varphi$ au-dessus de~$y$,
  alors la liberte $l(x)$ converge vers $0$ lorsque $h(x)$ tend vers
  $+\infty$ dans $X_y(\KK)$ en dehors des points critiques.
\end{rema}
Le r\'esultat suivant montre le lien
entre la libert\'e d'une courbe rationnelle et celle de ses
points.
\begin{nota}
  Soit $\varphi:\PP^1_\KK\to V$ un morphisme de vari\'et\'es,
  le fibr\'e $\varphi^*(T_V)$ est isomorphe \`a un
  unique fibr\'e de la forme $\bigoplus_{i=1}^n\mathcal O_{\PP^1_\KK}(a_i)$
  avec $a_1\geq a_2\geq\dots\geq a_n$. On note $\mu_i(\varphi)=a_i$
  pour $i\in{1,\dots,n}$ et $\deg(\varphi)=\sum_{i=1}^n\mu_i(\varphi)$.
  On d\'efinit alors la libert\'e de $\varphi$ comme le
  nombre rationnel $l(\varphi)=n\mu_n(\varphi)/\deg(\varphi)$
  si $\mu_n(\varphi)>0$.
  On pose $l(\varphi)=0$, si $\mu_n(\varphi)<0$.
\end{nota}
\begin{prop}
  \label{prop.courbes}
  Soit $\varphi:\PP^1_\KK\to V$ un morphisme de vari\'et\'es non constant,
  alors
  \begin{assertions}
  \item
    Si $\mu_n(\varphi)<0$, alors $l(\varphi(x))=0$ pour tout $x$
    de $\PP^1(\KK)$ en dehors d'une partie finie;
  \item
    Si $\mu_n(\varphi)\geq0$, alors $l(\varphi(x))$ converge vers~$l(\varphi)$
    dans $\PP^1(\KK)$ pour le filtre de Fr\'echet du compl\'ementaire
    des parties finies.
  \end{assertions}
\end{prop}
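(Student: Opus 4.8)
The plan is to compare, for a rational point $x$ of $\PP^1_\KK$, the arithmetic Newton polygon of the adelic vector space $T_{\varphi(x)}V$ with the Harder--Narasimhan polygon of $\varphi^*(T_V)$, and then to invoke Northcott's theorem: running $x$ along the Fréchet filter forces its height to tend to $+\infty$. This refines Proposition~\ref{prop.inegalite.morph.var}, where only an upper bound on $l$ was obtained; the point is that on $\PP^1_\KK$ the bundle $\varphi^*(T_V)$ splits as a sum of line bundles, so one controls $\mu_{\min}$ from \emph{both} sides.

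First I would fix the data. Equip $\mathcal O_{\PP^1_\KK}(1)$ with a classical adelic norm, write $h_0$ for the associated logarithmic height, and give $\mathcal O_{\PP^1_\KK}(a)$ the corresponding tensor power, so that $\dega\bigl(x^*\mathcal O_{\PP^1_\KK}(a)\bigr)=a\,h_0(x)$ for $x\in\PP^1(\KK)$. Choose an isomorphism $\varphi^*(T_V)\simeq\bigoplus_{i=1}^n\mathcal O_{\PP^1_\KK}(a_i)$ with $a_1\geq\dots\geq a_n$, so that $\mu_i(\varphi)=a_i$ and $\deg(\varphi)=\sum_i a_i$. This places two classical adelic norms on the \emph{same} bundle $\varphi^*(T_V)$: the norm $E$ obtained by pulling back the metric on $V$ (Example~\ref{exem.4.4.h}), for which $E(x)=T_{\varphi(x)}V$ as an adelic vector space, and the direct sum norm $E'$ (Example~\ref{exem.fibre.somme}) assembled from the norms on the $\mathcal O_{\PP^1_\KK}(a_i)$.

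The core estimate to establish is that there is a constant $C\geq0$ with $|\mu_i(\varphi(x))-a_i\,h_0(x)|\leq C$ for every $i$ and every $x\in\PP^1(\KK)$. Since $E$ and $E'$ share their underlying vector bundle, Lemma~\ref{lemm.5.2} yields $|\mu_i(\varphi(x))-\mu_i(E'(x))|\leq C_1$ uniformly, so it suffices to treat $E'(x)=\bigoplus_i x^*\mathcal O_{\PP^1_\KK}(a_i)$, a direct sum of adelic lines of degrees $a_1h_0(x)\geq\dots\geq a_nh_0(x)$ (I work first on the cofinite set where $h_0(x)>0$, the remaining points being absorbed into $C$). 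Since $\dega$ is additive over direct sums (Example~\ref{exem.4.4.g}) and the direct sum norm restricts to the given norm on each summand, picking a nonzero vector in each of the $i$ summands of largest degree gives a free family showing $\lambda_i(E'(x))\leq e^{-a_ih_0(x)}$; the minima--slopes comparison of Remark~\ref{rema.espaces.minima} then gives $\mu_i(E'(x))\geq a_ih_0(x)-C_2$, and applying the same argument to the dual $\bigoplus_i x^*\mathcal O_{\PP^1_\KK}(-a_i)$ together with the slope duality of Remark~\ref{rema.pente.dualite} gives $\mu_i(E'(x))\leq a_ih_0(x)+C_2$. In particular $h(\varphi(x))=\deg(\varphi)\,h_0(x)+O(1)$ and $\mu_{\min}(\varphi(x))=\mu_{\min}(\varphi)\,h_0(x)+O(1)$.

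To conclude, recall that by Northcott's theorem $h_0(x)\to+\infty$ along the Fréchet filter on $\PP^1(\KK)$. If $\mu_{\min}(\varphi)<0$, then $\mu_{\min}(\varphi(x))\to-\infty$, so $\mu_{\min}(\varphi(x))<0$ off a finite set and $l(\varphi(x))=0$ there, which is assertion~(i). If $\mu_{\min}(\varphi)\geq0$, then, $\varphi$ being non-constant, the differential $d\varphi\colon T_{\PP^1_\KK}=\mathcal O_{\PP^1_\KK}(2)\to\varphi^*(T_V)$ is nonzero, hence $a_1\geq2$; and $a_n\geq0$ forces all $a_i\geq0$, so $\deg(\varphi)\geq2>0$. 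Thus for $h_0(x)$ large enough $h(\varphi(x))>0$ and, recalling that $l(\varphi(x))=\max\bigl(0,n\,\mu_{\min}(\varphi(x))/h(\varphi(x))\bigr)$,
\[l(\varphi(x))=\max\Bigl(0,\frac{n\,\mu_{\min}(\varphi)\,h_0(x)+O(1)}{\deg(\varphi)\,h_0(x)+O(1)}\Bigr)\longrightarrow\max\Bigl(0,\frac{n\,\mu_{\min}(\varphi)}{\deg(\varphi)}\Bigr)=l(\varphi),\]
which is assertion~(ii). The only genuinely delicate point is the two-sided control of the arithmetic slopes of a direct sum of adelic line bundles in the core estimate; everything else is bookkeeping together with the input of Northcott's theorem.
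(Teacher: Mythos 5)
Your proof is correct and follows essentially the same route as the paper: two adelic norms on $\varphi^*(T_V)$ (pullback versus the one from the splitting $\bigoplus_i\mathcal O_{\PP^1_\KK}(a_i)$), comparison via Lemma~\ref{lemm.5.2} to get $\mu_n(\varphi(x))=\mu_n(\varphi)h_0(x)+O(1)$ and $h(\varphi(x))=\deg(\varphi)h_0(x)+O(1)$, Northcott along the Fr\'echet filter, and the differential $\mathcal O_{\PP^1_\KK}(2)\to\varphi^*(T_V)$ forcing $\deg(\varphi)>0$ in case (ii). The only difference is that you make explicit, via successive minima and slope duality, the slope estimate for the split metric, which the paper treats as immediate.
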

\begin{proof}
  On peut munir $\varphi^*(TV)$ de deux m\'etriques: d'une part
  de celle d\'eduite par image inverse de la m\'etrique de~$V$,
  et d'autre part de celle issue de l'isomorphisme de $\varphi^*(TV)$
  sur $\bigoplus_{i=1}^n\mathcal O_{\PP^1_\KK}(\mu_i(\varphi))$.
  D'apr\`es le lemme~\ref{lemm.5.2}, on en d\'eduit
  que
  \[\mu_n(\varphi(x))=\mu_n^{\varphi^*(TV)}(x)=\mu_n(\varphi)h_1(x)+O(1)\]
  et
  \[h(\varphi(x))=\deg(\varphi)h_1(x)+O(1)\]
  o\`u $h_1(x)$ d\'esigne une hauteur sur $\PP^1_\KK$ relative au
  fibr\'e $\mathcal O_{\PP^1_\KK}(1)$.
  L'assertion a) en r\'esulte par d\'efinition de la libert\'e.
  D'autre part, comme le morphisme $\varphi$ est non constant,
  l'application d\'eriv\'ee de $\varphi$ donne un morphisme non
  trivial de $\mathcal O_{\PP^1_\KK}(2)$ dans $\varphi^*(TV)$
  ce qui prouve que $\mu_1(\varphi)\geq 2$. En particulier, dans le cas b),
  ${\deg(\varphi)>0}$ ce qui compl\`ete la
  preuve.
\end{proof}
%% End of file 05_elementaires.tex
%% Including file 06-01_constante.tex
\section{Empirisme}%
\label{section.formule}
\subsection{Rappels sur la constante empirique}
Nous allons rappeler ici l'interpr\'etation envisag\'ee
pour la constante qui appara\^\i t dans le terme dominant
du nombre de bons points rationnels de hauteur born\'ee.
Une premi\`ere version de cette constante a d'abord \'et\'e
d\'efinie dans~\cite{peyre:fano}. Par la suite, Batyrev
et Tschinkel dans \cite{batyrevtschinkel:toric} l'ont corrig\'ee en rajoutant
le facteur entier $\beta(V)$. Depuis, une r\'einterpr\'etation
de cette constante par Salberger dans \cite{salberger:tamagawa}
et les nombreux exemples connus ont confirm\'e son int\'er\^et.
D'autre part, Batyrev et Tschinkel l'ont g\'en\'eralis\'ee dans
le cas o\`u la hauteur n'est pas relative au fibr\'e anticanonique
\cite{batyrevtschinkel:tamagawa}
mais cette g\'en\'eralisation sort du cadre de cet article.

Dans ce paragraphe, la lettre~$V$ d\'esigne une belle
vari\'et\'e munie d'une m\'etrique ad\'elique.
Pour pouvoir d\'efinir la constante, nous
supposerons en outre que
\begin{conditions}
  \item Les groupes de cohomologie $H^i(V,\mathcal O_V)$ sont
    nuls pour $i=1$ ou $2$;
  \item Le groupe de Picard g\'eom\'etrique de~$\overline V$
    est un $\ZZ$-module libre de rang fini.
\end{conditions}
Pour toute place~$v$ de~$\KK$, on note $\haar {x_v}$ la mesure de Haar
sur~$\KK_v$ normalis\'ee de la fa\c con suivante:
\begin{itemize}
\item Si $v$ est une place r\'eelle, alors $\haar {x_v}$ est la
  mesure de Lebesgue usuelle sur $\RR$;
\item Si $v$ est une place complexe, alors
  $\haar {x_v}=\ci\haar{z}\haar{\overline z}=2\haar x\haar y$;
\item Sinon $\int_{\mathcal O_v}\haar{x_v}=1$.
\end{itemize}
La m\'etrique ad\'elique sur~$V$ induit une norme ad\'elique
$(\Vert\cdot\Vert_v)_{v\in\Val(\KK)}$ sur le faisceau anticanonique.
Rappelons la construction de la mesure de Tamagawa \`a partir
de cette norme ad\'elique.
Pour tout place~$v$ de~$\KK$, on peut alors d\'efinir une mesure
bor\'elienne sur l'espace $V(\KK_v)$ donn\'ee dans
un syst\`eme de coordonn\'ees locales
${(x_1,x_2,\dots,x_n)}$ par la formule
\[\oomega_v=\left\Vert\frac\partial{\partial x_1}\wedge
\dots\wedge\frac\partial{\partial x_n}\right\Vert_v
\haar {x_{1,v}}\dots\haar {x_{n,v}}.\]
La formule de changement de variables \cite{weil:adeles} permet de montrer
que cette expression est bien ind\'ependante du syst\`eme de
coordonn\'ees choisi.
\begin{lemm}
  \label{lemme.mesure.finie}
  Soit~$v$ une place de~$\KK$.
  Supposons que la norme $\Vert\cdot\Vert_v$ est d\'efinie
  par un mod\`ele projectif et lisse $\mathcal V$ de~$V$ sur l'anneau
  des entiers $\mathcal O_v$ de $\KK_v$. Soit $\mathfrak m_v$
  l'id\'eal maximal de $\mathcal  O_v$ et
  $\FF_v$ le corps $\mathcal O_v/\mathfrak m_v$; pour tout entier~$k$
  notons
  \[\pi_k:V(\KK_v)\longrightarrow \mathcal V(\mathcal O_v/\mathfrak m_v^k)\]
  l'application de r\'eduction modulo~$\mathfrak m_v^k$. Alors $\oomega_v$
  est la \emph{mesure naturelle} caract\'eris\'ee par
  les relation
  \begin{equation}\label{equ.mesure.can}
    \oomega_v(\pi_k^{-1}(X))=\frac{\card X}{\card{\FF_v^{nk}}}
  \end{equation}
  o\`u $k$ parcourt les entiers strictement positifs
  et $X$ les parties de l'ensemble fini
  $\mathcal V(\mathcal O_v/\mathfrak m_v^k)$.
  En particulier, on a l'\'egalit\'e
  \[\oomega_v(V(\KK_v))=d_v(V)=\frac{\card \mathcal V(\FF_v)}{\card{\FF_v^n}}.\]
\end{lemm}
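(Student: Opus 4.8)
The plan is to reduce the identity to a local computation in the $v$-adic charts through which $\oomega_v$ is defined, and then to evaluate it there by hand. First I would set up the measure-theoretic skeleton. Since $\mathcal V$ is projective, $V(\KK_v)$ is identified with $\mathcal V(\mathcal O_v)$, and the sets $\pi_k^{-1}(X)$, for $k\geq 1$ and $X\subseteq\mathcal V(\mathcal O_v/\mathfrak m_v^k)$, form a countable family stable under finite intersection (indeed an algebra, since $\pi_k^{-1}(\tilde x_0)=\bigsqcup\pi_{k+1}^{-1}(\tilde x_1)$ over the $\tilde x_1$ refining $\tilde x_0$) which generates the Borel $\sigma$-algebra of this compact space; hence a Borel measure satisfying \eqref{equ.mesure.can} is unique, and it suffices to check that $\oomega_v$ satisfies the relation. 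By finite additivity one may assume $X=\{\tilde x_0\}$ is a single point of $\mathcal V(\mathcal O_v/\mathfrak m_v^k)$, and since $\oomega_v$ is built from local coordinate systems and is chart-independent by the change of variables formula \cite{weil:adeles}, the question becomes local around the closed point $\tilde x$ of the special fibre which $\tilde x_0$ refines.

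Next I would produce good coordinates near $\tilde x$. As $\mathcal V$ is smooth over $\mathcal O_v$ of relative dimension $n$, there is an affine open $\mathcal U\subseteq\mathcal V$ containing $\tilde x$ together with an étale $\mathcal O_v$-morphism $\psi=(x_1,\dots,x_n)\colon\mathcal U\to\Aff^n_{\mathcal O_v}$. Because $\mathcal U$ is open and $\mathcal O_v$ is local, every $x\in V(\KK_v)$ with $\pi_1(x)=\tilde x$ automatically factors through $\mathcal U$, so $\pi_1^{-1}(\tilde x)\subseteq\mathcal U(\mathcal O_v)$; and because $\mathcal O_v$ is complete, the lifting property of the étale morphism $\psi$ (Hensel's lemma) shows that for every $k\geq 1$ the map $\psi$ restricts to a bijection from $\pi_k^{-1}(\{\tilde x_0\})$ onto the coset of $(\mathfrak m_v^k)^n$ in $\mathcal O_v^n$ lying above $\psi(\tilde x_0)$. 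I expect this step — extracting the étale chart from smoothness, checking that a single affine chart already contains the whole fibre $\pi_1^{-1}(\tilde x)$, and verifying that $\psi$ induces these bijections level by level — to be the only non-formal part of the argument; everything else is bookkeeping with the normalisations.

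Then I would compute $\oomega_v$ in the coordinates $x_1,\dots,x_n$. Étaleness of $\psi$ forces $\mathrm{d}x_1\wedge\dots\wedge\mathrm{d}x_n$ to generate the invertible sheaf $\omega_{\mathcal U/\mathcal O_v}$, hence $\partial/\partial x_1\wedge\dots\wedge\partial/\partial x_n$ generates $\det T_{\mathcal U/\mathcal O_v}$; by the very definition of the norm attached to the model (Remark~\ref{rema.II.3.11} together with the construction of Example~\ref{exem.4.4.g}), this section generates, at every $x\in\mathcal U(\KK_v)$, the $\mathcal O_v$-lattice defining $\Vert\cdot\Vert_v$ on $\det T_V(x)$, so that
\[\left\Vert\frac{\partial}{\partial x_1}\wedge\dots\wedge\frac{\partial}{\partial x_n}\right\Vert_v=1\]
identically on $\mathcal U(\KK_v)$. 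Therefore, on this chart the defining expression for $\oomega_v$ reduces to $\haar{x_{1,v}}\cdots\haar{x_{n,v}}$, i.e. $\oomega_v$ is exactly the pull-back along $\psi$ of the product of Haar measures on $\mathcal O_v^n$. Since $\int_{\mathcal O_v}\haar{x_v}=1$ — and here the normalisation of $|\cdot|_v$, giving a uniformiser absolute value $\card{\FF_v^n}^{\,-1/n}$, enters — the set $(\mathfrak m_v^k)^n$ has measure $1/\card{\FF_v^{nk}}$. Combining this with the bijection of the previous step gives
\[\oomega_v\bigl(\pi_k^{-1}(\{\tilde x_0\})\bigr)=\frac{1}{\card{\FF_v^{nk}}},\]
and summing over $\tilde x_0\in X$ yields \eqref{equ.mesure.can}; specialising to $k=1$ and $X=\mathcal V(\FF_v)$ gives $\oomega_v(V(\KK_v))=\card{\mathcal V(\FF_v)}/\card{\FF_v^n}$.
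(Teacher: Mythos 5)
Your proof is correct and follows essentially the same route as the paper: reduce to a singleton, choose local coordinates adapted to the model so that $\frac{\partial}{\partial x_1}\wedge\dots\wedge\frac{\partial}{\partial x_n}$ has norm $1$, identify $\pi_k^{-1}(\{\tilde x_0\})$ with a coset of $(\mathfrak m_v^k)^n$ and compute its Haar measure $\card{\FF_v}^{-nk}$. The only differences are presentational: you obtain the chart from an \'etale map given by smoothness and make explicit the Hensel-lifting bijection and the uniqueness of the measure, whereas the paper takes coordinates $x_i=X_i/X_0$ from a projective embedding and leaves these verifications implicit.
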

\begin{proof}
  Cette propri\'et\'e est locale,
  il suffit donc de d\'emontrer la formule~\eqref{equ.mesure.can}
  dans le cas o\`u $X$ est un singleton. Comme $\mathcal V$ est
  suppos\'ee projective et lisse sur $\mathcal O_v$,
  on choisit un plongement $\mathcal V\to\PP^N_{\mathcal O_v}$
  de sorte que les fonctions rationnelles
  $x_1=\frac{X_1}{X_0},\dots,x_n=\frac{X_n}{X_0}$
  forment un syst\`eme de coordonn\'ees
  sur l'image inverse de $X$ dans $\mathcal V(\mathcal O_v)$ et que
  $(\frac{\partial}{\partial x_1},\dots,\frac{\partial}{\partial x_n})$
  donne la $\mathcal O_v$-structure sur le
  fibr\'e tangent $T\mathcal V_{\mid W}$.
  L'application $f:W\to \KK_v^n$ d\'efinie par $(x_1,\dots,x_n)$
  pr\'eserve alors la congruence modulo $\mathfrak m_v^k$ et
  $\frac{\partial}{\partial x_1}\wedge\dots\wedge\frac{\partial}{\partial x_n}$
  est de norme~$1$ sur~$W$. Par cons\'equent, il existe $x_0\in \KK_v^n$
  de sorte
  que
  \[\oomega_v(\pi_k^{-1}(X))=\int_{x_0+\mathfrak m_v^k}\haar {x_{1,v}}\dots
  \haar{x_{n,v}}=\card\FF_v^{-nk}.\qed\]\noqed
\end{proof}
Comme dans \cite[\S2.1]{peyre:fano}, il r\'esulte alors
de la formule de Lefschetz
et de la conjecture de Weil d\'emontr\'ee par Deligne~\cite{deligne:weil}, que,
sous les hypoth\`eses (i) et~(ii) faites ci-dessus,
on a pour presque toute place~$v$ de~$\KK$ la formule
\[\oomega_v(V(\KK_v))=d_v(V)=1+
\frac{1}{\card\FF_v}\Tr(\Fr_v|\Pic(\mathcal V_{\overline{\FF}_v}\otimes\QQ))+ O(\card\FF_v^{-3/2}).\]
Suivant le principe de construction des mesures de Tamagawa
sur les espaces ad\'eliques, cela am\`ene \`a la renormalisation suivante:
\begin{defi}
  On fixe une extension galoisienne~$\LL$ de~$\KK$ qui d\'eploie le grou\-pe
  de Picard de~$V$.
  Notons~$S$ une partie finie de $\Val(\KK)$ contenant l'ensemble
  des places archim\'ediennes ainsi que les places se ramifiant dans
  l'extension $\LL/\KK$. Pour tout $v\in\Val(\KK)\setminus S$,
  La substitution de Frobenius correspondant \`a une extension~$w$
  de~$v$ \`a~$\KK$ sera not\'ee $(w,\LL/\KK)$ (\cf\cite[\S1.8]{serre:corpslocaux}).
  On pose alors, pour tout nombre complexe $s$ dont la partie r\'eelle
  $\Re(s)$ est strictement positive,
  \[L_v(s,\Pic(\overline V))=\frac{1}{\Det(1-\card\FF_v^{-s}(w,\LL/\KK)\mid
    \Pic(\overline V))}.\]
  et, pour tout nombre complexe $s$ tel que $\Re(s)>1$,
  \[L_S(s,\Pic(\overline V))=\prod_{v\in\Val(\KK)-S}L_v(s,\Pic(\overline V)),\]
  se produit convergeant d'apr\`es \cite[th\'eor\`eme 7]{Artin:lreihen}.
  La mesure de Tamagawa sur l'espace ad\'elique $V(\Adeles_\KK)$
  est alors d\'efinie par
  \[\oomega_V=\frac{\lim_{s\to 1}(s-1)^t
    L_S(s,\Pic(\overline V)}{\sqrt{d_\KK}^{\,\dim(V)}}
  \prod_{v\in\Val(\KK)}\frac 1{L_v(1,\Pic(\overline V))}\oomega_v.\]
\end{defi}
\begin{rema}
  Notons que cette mesure est ind\'ependante du choix de l'ensemble~$S$.
  Par contre, elle d\'epend de la m\'etrique ad\'elique dont est
  \'equip\'ee~$V$.
\end{rema}
Comme l'a fait remarquer Swinnerton-Dyer, le bon domaine d'int\'egration
pour la valeur de la constante est forc\'ement l'adh\'erence des
points rationnels dans l'espace ad\'elique. Toutefois cette adh\'erence
n'est a priori pas connue, ce qui pourrair emp\^echer de calculer
explicitement la constante dans des cas particuliers. En cons\'equence,
comme c'est maintenant usuel dans l'exploration du programme de Batyrev
et Tschinkel, nous allons supposer implicitement que l'obstruction
de Brauer-Manin~\cite{manin:brauer} \`a l'approximation faible est la seule.
Rappelons rapidement la construction de cette obstruction~(\cf
\cite{peyre:obstruction} pour un survol plus d\'etaill\'e).
On note $\Br(F)=H^2(F,\mathbf G_m)$ le groupe de Brauer d'un corps~$F$.
La th\'eorie du corps de classe global
(\cf \cite[theorem 8.1.17]{neukirchschmidtwingberg}) donne une suite exacte
\begin{equation}
  \label{equ.classe}
  0\longrightarrow\Br(\KK)\longrightarrow\bigoplus_{v\in\Val(\KK)}\Br(\KK_v)@>
  \sum_{v\in\Val(\KK)}\inv_v>>\QQ/\ZZ\longrightarrow 0.
\end{equation}
Notons $\Br(V)=H^2_{\et}(V,\mathbf G_m)$ le groupe de Brauer cohomologique
de~$V$. Pour toute extension~$L$ de~$\KK$ et tout $L$-point~$x$ de~$V$,
la fonctorialit\'e du groupe de Brauer donne un morphisme de sp\'ecialisation
$\ev_x:\Br(V)\to\Br(L)$. On peut donc d\'efinir un accouplement
de $\Br(V)\times V(\Adeles_\KK)$ dans $\QQ/\ZZ$ par
\[\langle A,(x_v)_{v\in\Val(\KK)}\rangle=\sum_{v\in\Val(\KK)}\inv_v(\ev_{x_v}(A))\]
pour $A\in\Br(V)$ et $(x_v)_{v\in\Val(\KK)}\in V(\Adeles_\KK)$.
Autrement dit, pour tout point~$x$ de l'espace des ad\`eles,
on obtient un morphisme de groupes $\omega_x:\Br(V)\to\QQ/\ZZ$ donn\'e
par $A\mapsto \langle A,x\rangle$. 
Compte tenu de la suite exacte~\eqref{equ.classe}, ce morphisme
est trivial si le point~$x$ provient d'un point rationnel et on
appelle $\omega_x$
\emph{l'obstruction de Brauer-Manin en~$x$}. Par continuit\'e
des applications consid\'er\'ees, l'adh\'erence des points rationnels
dans l'espace ad\'elique est contenue dans
\emph{l'espace de Brauer Manin}:
\[V(\Adeles_\KK)^{\Br}=\{\,x\in V(\Adeles_\KK)\mid \omega_x=0\,\}.\]
\begin{defi}
  Rappelons que~$V$ d\'esigne une belle vari\'et\'e munie d'une m\'etrique
  ad\'elique et que~$V$ v\'erifie les conditions (i) et (ii).
  On d\'efinit alors \emph{le nombre de Tamagawa-Brauer-Manin} de~$V$
  par
  \[\tau^{\Br}(V)=\oomega_V(V(\Adeles_\KK)^{\Br}).\]
\end{defi}
Pour compl\'eter la d\'efinition de la constante empirique,
il reste \`a multiplier ce nombre par deux invariants
de la vari\'et\'e, dont nous rappelons maintenant la d\'efinition.
\begin{defi}
  On d\'efinit la constante $\alpha(V)$ par la formule
  \[\alpha(V)=\frac{1}{(t-1)!}\int_{C^1_{\eff}(V)\dual}e^{-\langle \omega_V^{-1},y
    \rangle}\haar y\]
  o\`u on note $C^1_{\eff}(V)$ le c\^one ferm\'e de $\Pic(V)\otimes_\ZZ\RR$
  engendr\'e par les classes de diviseurs effectifs et $C^1_{\eff}(V)\dual$
  le c\^one dual:
  \[C^1_{\eff}(V)\dual=\{\,y\in\Pic(V)\otimes_\ZZ\RR\dual\mid
  \forall x\in C^1_{\eff}(V),\ \langle x,y\rangle\geq 0\,\}.\]
  La mesure sur $\Pic(V)\otimes_\ZZ\RR\dual$ est normalis\'ee
  de fa\c con \`a ce que le r\'eseau d\'efini par $\Pic(V)\dual$
  soit de covolume~$1$.
\end{defi}
\begin{rema}
  Un simple changement de variables montre que cette cons\-tan\-te $\alpha(V)$
  peut \^etre \'egalement vue comme le volume, convenablement normalis\'e,
  de l'intersection du c\^one $C^1_{\eff}(V)\dual$ avec l'hyperplan affine
  d'\'equation $\langle\omega_V^{-1},y\rangle=1$
  (\cf\ \cite[d\'efinition 2.4]{peyre:fano}). On en d\'eduit
  que, dans le cas o\`u le c\^one effectif $C^1_{\eff}(V)$ est
  engendr\'e par un nombre fini de classes de diviseurs effectifs,
  la constante
  $\alpha(V)$ est rationnelle. C'est le cas dans les exem\-ples consid\'er\'es
  ult\'erieurement.
\end{rema}
\begin{defi}
  La constante $\beta(V)$ est l'entier
  \[\beta(V)=\card H^1(\KK,\Pic(\overline V)).\]
\end{defi}
\begin{rema}
  Rappelons que l'introduction de ce terme
  est d\^ue \`a Batyrev et Tschinkel
  \cite{batyrevtschinkel:toric}.
\end{rema}
\begin{defi}
  Dans cet article, la \emph{constante empirique}
  associ\'ee \`a la vari\'et\'e~$V$ munie de sa m\'etrique ad\'elique est
  \[C(V)=\alpha(V)\beta(V)\tau^{\Br}(V).\]
\end{defi}
\begin{rema}
  Salberger dans \cite{salberger:tamagawa}
  a donn\'e une interpr\'etation naturelle
  de la constante en termes des torseurs versels au-dessus
  de la vari\'et\'e, qu'on peut d\'ecrire de la fa\c con suivante:
  les torseurs versels sont munis d'une forme de jauge qui d\'efinit
  une mesure canonique sur l'espace des ad\`eles associ\'e.
  La constante s'interpr\`ete alors en termes de la somme, sur les
  diff\'erentes classes de
  torseurs versels ayant un point ad\'elique,
  du volume d'un domaine d'ad\`eles convenable
  (\cf \'egalement \cite{peyre:torseurs}).
\end{rema}
%% End of file 06-01_constante.tex
%% Including file 06-02_formule.tex
\subsection{Formule empirique am\'elior\'ee}
\label{subsection.formule}
Dans l'\'etude des espaces de modules de courbes
rationnelles, les courbes tr\`es libres sont caract\'eris\'ees
par le fait que leurs pentes sont strictement positives.
Comme nous allons le voir dans les exem\-ples qui suivent,
dans le contexte arithm\'etique, les points d'une sous-vari\'et\'e
faiblement accumulatrice fix\'ee semblent avoir
une libert\'e qui tend vers~$0$. Toutefois, les points v\'erifiant
la condition $l(x)<\varepsilon$ pour un nombre r\'eel $\varepsilon <1$
peuvent contribuer au terme principal du nombre de points de hauteur
born\'ee, m\^eme dans le cas d'une vari\'et\'e homog\`ene comme
$\PP^1_\QQ\times\PP^1_\QQ$. Cela nous am\`ene \`a consid\'erer
une condition plus faible dans la formule empirique
suivante.
\begin{defis}
  \label{defis.epsilon.libre}
  On notera~$\mathcal D$ l'ensemble
  des applications~$\varepsilon$
  de l'intervalle~${\leftclose 1,+\infty\rightopen}$
  dans~$\leftopen 0,1\rightopen$
  continues et d\'ecroissantes telles que
  \begin{conditions}
  \item L'application $\varepsilon$ tend vers~$0$ en $+\infty$;
  \item Pour tout $\alpha\in\leftopen 0,1]$, l'application
    $t\mapsto \log(t)^\alpha\varepsilon(t)$ tend vers $+\infty$ en $+\infty$.
  \end{conditions}
  Soit~$V$ une belle vari\'et\'e de dimension $n>0$ sur le corps de
  nombres~$\KK$.
  On suppose $V$ munie d'une m\'etrique ad\'elique.
  Soit~$\varepsilon$ une application de~$\mathcal D$. Pour tout
  $B\geq 1$ on d\'efinit l'ensemble
  des points \emph{$\varepsilon$-libres} et de hauteur inf\'erieure \`a~$B$:
  \[V(\KK)^{\elibre}_{H\leq B}=\left\{\,x\in V(\KK)\left|
  H(x)\leq B\text{ et }l(x)\geq \varepsilon(B)
  \,\right.\right\}.\]
  Si un multiple de la classe du faisceau anticanonique $\omega_V^{-1}$
    peut s'\'ecrire comme la somme d'un faisceau ample et d'un diviseur
    \`a croisement normaux stricts, on dira que la vari\'et\'e est
    \emph{vaste}\footnote{Le terme anglais \og big\fg\ signifie une grande
      taille \`a la fois en hauteur et en largeur, le terme \og vaste\fg\
      convient donc pour traduire une condition plus forte que la simple
      grosseur.}.
\end{defis}
\begin{rema}
  Les fonctions $\varepsilon$ envisag\'ee sont du type
  \[t\mapsto\max(1,\log(\log(t)))^{-\alpha}\] pour un r\'eel $\alpha>0$.
\end{rema}
\begin{form}
  \label{formule.empirique}
  Soit~$V$ une belle vari\'et\'e sur~$\KK$ de dimension strictement positive
  et munie d'une m\'etrique ad\'elique. On fait les hypoth\`eses suivantes:
  \begin{conditions}
  \item La vari\'et\'e~$V$ est vaste;
  \item Les groupes de cohomologie $H^i(V,\mathcal O_V)$ sont
    nuls pour $i=1$ ou $2$;
  \item Le groupe de Picard g\'eom\'etrique de~$\overline V$
    est un $\ZZ$-module libre de rang fini;
  \item Les points rationnels de~$V$ sont denses pour la topologie de Zariski;
  \end{conditions}
  On note~$t$ le rang de $\Pic(V)$.
  Pour une application $\varepsilon$ convenable de l'ensemble~$\mathcal D$,
\begin{equation}
  \tag{F} \label{equ.laformule}\card{V(\KK)^{\elibre}_{H\leq B}}\sim
  C(V)B\log(B)^{t-1}
\end{equation}
lorsque $B$ tend vers $+\infty$.
\end{form}
\begin{rema}
  \`A la connaissance de l'auteur, on ne dispose \`a l'heu\-re actuelle d'aucun
  r\'esultat pour une vari\'et\'e~$V$ dont la partie transcendante du
  groupe de Brauer, c'est-\`a-dire l'image de l'extension
  des scalaires $\Br(V)\to\Br(\overline V)$ est non nulle.
  On n'a donc pas d'exemple permettant de confirmer
  que la constante $\beta(V)=H^1(k,\Pic(\overline V))$
  est la bonne dans une telle situation.
\end{rema}

%% End of file 06-02_formule.tex
%% Including file 06-03_distribution.tex
\subsection{La distribution asymptotique}
Comme d\'ej\`a expliqu\'e dans \cite[\S5]{peyre:fano}
la validit\'e de la formule empirique pour tout choix de m\'etrique
implique une \'equidistribution des points rationnels vis-\`a-vis
de la mesure de probabilit\'e \`a densit\'e continue obtenue
en renormalisant la mesure introduite dans la d\'efinition
de la constante empirique.

Pour parler d'\'equidistribution nous allons commencer
par rappeler quelques notions concernant les mesures de comptage.
Pour toute partie finie non vide $X$ de l'espace ad\'elique,
la \emph{mesure de comptage associ\'ee \`a}~$X$
est la mesure d\'efinie sur $V(\Adeles_\KK)$ par
\[\ddelta_X=\frac1{\card X}\sum_{x\in X}\delta_x\,,\]
o\`u $\delta_x$ d\'esigne la mesure de Dirac en~$x$.
Soit $B\geq 1$; lorsque l'ensemble $V(\KK)_{H\leq B}^{\elibre}$
est fini et non vide, on note $\ddelta_{H\leq B}^{\elibre}$
la mesure ainsi associ\'ee \`a  $V(\KK)_{H\leq B}^{\elibre}$.

\begin{defi}
Sur l'espace ad\'elique, sous les hypoth\`eses
(i) \`a (iv) de la formule empirique~\ref{formule.empirique},
qui garantit en particulier que l'espace de Brauer-Manin n'est pas vide,
nous d\'efinissons \'egalement une mesure
de probabilit\'e bor\'elienne support\'ee par l'espace de Brauer-Manin
\`a partir de la mesure $\oomega_V$:
\[\mmu_V(U)=\frac1{\oomega_V(V(\Adeles_\KK)^{\Br})}\oomega_V(U
\cap V(\Adeles_\KK)^{\Br}).\]
\end{defi}
\begin{rema}
  Notons que si $V(\Adeles_\KK)^{\Br}=V(\Adeles_\KK)$, cette mesure
  est le produit des mesures $\mmu_{V,v}=\frac1{\oomega_v(V(\KK_v))}\oomega_v$
  qui est convergent.
  Par le lemme~\ref{lemme.mesure.finie},
  pour presque toute
  place finie~$v$, la mesure de probabilit\'e $\mmu_{V,v}$ est donn\'ee par la
  \emph{mesure de probabilit\'e} d\'efinie par un mod\`ele
  $\mathcal V$ de~$V$ sur $\mathcal O_v$, caract\'eris\'ee par
  la relation
  \begin{equation}
    \oomega_v(\pi_k^{-1}(X))=\frac{\card X}{\card
      {\mathcal V(\mathcal O_v/\mathfrak m_v^k)}}
  \end{equation}
  o\`u $k$ parcourt les entiers entiers strictement positif et
  $X$ les parties de $\mathcal V(\mathcal O_v/\mathfrak m_v^k)$, en notant
  $\pi_k$ la r\'eduction modulo $\mathfrak m_v^k$.
  Cette remarque s'\'etend sans peine \emph{mutatis mutandis} au cas o\`u
  $V(\Adeles_\KK)^{\Br}=W\times\prod_{v\not\in S}V(\KK_v)$,
  pour un ensemble fini $S$ de places et une partie~$W$ ouverte
  et ferm\'ee dans $\prod_{v\in S}V(\KK_v)$.
\end{rema}
\begin{dist}
  \label{distribution.empirique}
  Sous les hypoth\`eses \textup{(i)} \`a \textup{(iv)} de la
  formule empirique, pour toute application~$\varepsilon$ de~$\mathcal D$,
  \begin{equation}
    \tag{E}\label{equ.distribution} \ddelta_{H\leq B}^{\elibre}\longrightarrow
    \mmu_V
  \end{equation}
  au sens faible lorsque~$B$ tend vers $\infty$.
\end{dist}
\begin{rema}
  Cette distribution empirique implique que l'obstruction
  de Brauer-Manin \`a l'approximation faible est la seule.
\end{rema}
%% End of file 06-03_distribution.tex
% Probl\`eme \input{06-04_variante}
%% Including file 07-01_projectif.tex
\section{Compatibilit\'e avec les exemples}%
\label{section.exemples}
\Subsection{L'espace projectif}
\label{subsection.projectif}
\subsubsection{Minoration de la libert\'e}
Nous allons tout d'abord donner une expression
pour la libert\'e d'un point de l'espace projectif et en d\'eduire que
cette libert\'e est minor\'ee par une constante
strictement positive. Pour l'expression
de la libert\'e, nous allons d'abord fixer une m\'etrique
sur l'espace projectif, qui g\'en\'eralise
l'exemple~\ref{exem.proj.naif}. Soit $n$ un entier strictement positif.
Notons $E=\KK^{n+1}$.
Soit~$w$ une place de~$\KK$. On d\'efinit une norme $\Vert\cdot\Vert_w$
sur $E\otimes_\KK\KK_w$ qu'on peut identifier avec $\KK_w^{n+1}$ par
les formules:
\begin{conditions}
\item $\Vert\boldsymbol y\Vert_w=\sqrt{\sum_{i=0}^ny_i^2}$ si~$w$ est une place
r\'eelle;
\item $\Vert\boldsymbol y\Vert_w=\sum_{i=0}^n|y_i|^2$ si~$w$ est une place
complexe;
\item $\Vert\boldsymbol y\Vert_w=\max_{0\leq i\leq n}(|y_i|_w)$ sinon
\end{conditions}
pour tout $\boldsymbol y=(y_0,\dots,y_n)\in \KK_w^{n+1}$.
Aux places non archim\'ediennes, la norme est donc d\'efinie
par le $\mathcal O_\KK$-module $\mathcal O_\KK^{n+1}$
et
\[\dega(E)=-\,\frac{n+1}2\log|\Delta_\KK|,\] o\`u $\Delta_\KK$ d\'esigne le
discriminant de $\KK$.

Notons $s:\PP(E)\to \Spec(\KK)$ le morphisme structural.
Les normes pr\'ec\'edentes d\'efinissent
une norme ad\'elique sur le fibr\'e vectoriel $s^*(E)$.
En consid\'erant le fibr\'e $\mathcal O_{\PP^n_\KK}(-1)$ comme un sous-fibr\'e
de $s^*(E)$, on peut munir $\mathcal O_{\PP^n_\KK}(-1)$ 
d'une norme ad\'elique. Mais le fibr\'e tangent est canoniquement
isomorphe \`a un quotient de 
$\mathcal O_{\PP^n_\KK}(1)\otimes_{\mathcal O_{\PP^n_\KK}}s^*(E)$,
ce qui permet de le munir de la norme ad\'elique induite.
Si~$F$ est un sous-espace vectoriel de~$E$, il est muni de
la norme ad\'elique induite. 
\begin{prop}\label{prop.6.1}
  Soit~$x$ un point de l'espace projectif~$\PP^n(\KK)$.
  Alors
  \[l(x)=\frac n{n+1}+\min_{F}
  \left(\frac{-n\dega(F)}{\codim_E(F)h(x)}\right),\]
  o\`u $F$ d\'ecrit l'ensemble des sous-espaces vectoriels stricts de $E$
  contenant la
  droite vectorielle correspondant \`a $x$ et $\codim_E(F)=\dim(E)-\dim(F)$.
\end{prop}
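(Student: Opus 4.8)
The plan is to compute explicitly the Newton polygon of the adelically normed tangent space $T_x\PP^n$ and read off $\mu_{\min}(x)$ from it, then divide by $\mu(x)=h(x)/n$. First I would make the identifications precise. Writing $L\subset E$ for the line corresponding to $x$, there is the Euler sequence identification $T_x\PP^n\cong \Hom(L,E/L)\cong L\dual\otimes(E/L)$ as adelically normed vector spaces, the norms on the right coming from the induced norm on $L$ (restriction from $s^*(E)$), the quotient norm on $E/L$, and the tensor/dual constructions of Examples~\ref{exem.fibre.produit} and~\ref{rema.pente.dualite}. Correspondingly $T_x\PP^n\dual\cong L\otimes(E/L)\dual$. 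Since $\dega$ is additive for tensor products with a line and the rank-one factor contributes $\dega(L)=-h_{\mathcal O(1)}$-type term, the key point is to understand the subspaces of $(E/L)\dual$, equivalently (by the perfect pairing) the quotients of $E/L$, equivalently the subspaces of $E$ containing $L$.

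The core computation is then: for a subspace $F$ with $L\subset F\subsetneq E$, the quotient $F/L\subset E/L$ is an adelic sub-lattice, and twisting by $L\dual$ gives a subspace $G_F=L\dual\otimes(F/L)\subset T_x\PP^n$ of codimension $\codim_E(F)$. One shows $\dega(G_F)=\dega(F/L)+\dim(F/L)\dega(L)$, and that $\dega(F/L)=\dega(F)-\dega(L)$ because the adelic structure on $F$ is the restriction of that on $E$ (at finite places $F\cap\mathcal O_\KK^{n+1}$, at archimedean places the induced Euclidean/Hermitian structure), and the quotient norm on $F/L$ is compatible — this is exactly the content of Example~\ref{exem.4.4.f}. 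Moreover the quotient $T_x\PP^n/G_F\cong L\dual\otimes(E/F)$ has the induced adelic structure, so $\dega$ is additive along this flag; hence the Newton polygon of $T_x\PP^n$ is the upper convex hull of the points $(\codim_E(F),\dega(G_F))$ together with $(n,\dega(T_x\PP^n))$. Since $\dega(T_x\PP^n)=\dega(E/L\otimes L\dual)=\dega(E/L)+n\dega(L)=\dega(E)-(n+1)\dega(L)=h(x)\cdot\frac{n+1}{?}$ — more usefully, $h(x)=\dega(T_x\PP^n)$ by definition — one gets $\mu_{\min}(x)=m_{T_x\PP^n}(n)-m_{T_x\PP^n}(n-1)$, i.e. the slope of the \emph{last} segment, which is governed by the \emph{largest} proper $F$, i.e. the hyperplanes; but $\mu_{\min}$ as an infimum over the whole polygon forces us to take the worst slope, which after the affine reparametrisation $\mu_{\min}(x)=\min_F\frac{\dega(T_x\PP^n)-\dega(G_F)}{n-\codim_E(F)}$ rearranges into $\frac{h(x)}{n}+\min_F\frac{-\dega(F)}{\codim_E(F)}\cdot\frac{n}{?}$; dividing by $\mu(x)=h(x)/n$ yields the stated formula $l(x)=\frac n{n+1}+\min_F\bigl(\frac{-n\dega(F)}{\codim_E(F)h(x)}\bigr)$ once the constant $\dega(L)$ is expressed via $h(x)$ using $\dega(E)=-\frac{n+1}{2}\log|\Delta_\KK|$ and the normalisation of the metric.

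The main obstacle I anticipate is the bookkeeping of normalisations: keeping straight (a) that $\dega(L)$ and $h(x)$ are related by $h(x)=\dega(T_x\PP^n)=-(n+1)\dega(L)+\dega(E)$, so that the \og free\fg\ term $\frac{n}{n+1}$ emerges precisely from $\frac{-(n+1)\dega(L)}{h(x)}$ modulo the discriminant correction, and (b) verifying carefully that no intermediate subspace of $T_x\PP^n$ \emph{other} than those of the form $G_F=L\dual\otimes(F/L)$ can lower the polygon — this holds because any subspace of $L\dual\otimes(E/L)$ of the form (line)$\otimes$(subspace) exhausts the saturated sub-objects, and saturated sub-objects are the only ones relevant for the convex hull. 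Once these two points are nailed down, the formula follows by pure arithmetic of the flag $L\subset F\subsetneq E$; I would present the flag additivity of $\dega$ as the one lemma worth stating explicitly and relegate the rest to a direct computation.
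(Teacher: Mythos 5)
Your plan follows exactly the route of the paper's own proof: identify $T_x\PP^n$ via the Euler presentation, observe that every subspace of the tangent space comes from a unique $F$ with $D\subseteq F\subsetneq E$, compute its arithmetic degree, read off $\mu_{\min}(x)$ from the Newton polygon and divide by $\mu(x)=h(x)/n$. (Incidentally, your worry (b) needs no talk of saturated sub-objects: since $L\dual$ is a line, \emph{every} vector subspace of $L\dual\otimes(E/L)$ is of the form $L\dual\otimes W$ for a unique $W\subseteq E/L$; that is the one-line argument the paper uses.) However, as written the plan has two genuine gaps. First, you never justify the identity $l(x)=n\,\mu_{\min}(x)/h(x)$ itself: by definition $l(x)=0$ whenever $\mu_{\min}(x)\leq 0$, so the stated formula requires showing that $\mu_n(T_x\PP^n)\geq 0$ for this particular metric. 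The paper secures this with the bound $-\dega(F)\geq\frac{\dim(F)}2\log|\Delta_\KK|$ for every subspace $F\subseteq E$ (projection onto a coordinate subspace, image of the lattice contained in $\mathcal O_\KK^{\dim F}$), together with $|\Delta_\KK|\geq 1$; some such argument must appear in your proof, and nothing in your outline supplies it.

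Second, the final bookkeeping, which you leave with question marks, is precisely where the constant $\frac n{n+1}$ has to come out, and you do not actually land on the claimed identity. The paper gets it from the relation $h(x)=-(n+1)\dega(D)$, so that $-n\dega(D)/h(x)=\frac n{n+1}$ and
\[\mu_n(T_x\PP^n)=-\dega(D)+\min_{F}\Bigl(\frac{-\dega(F)}{\codim_E(F)}\Bigr),\qquad
l(x)=\frac n{n+1}+\min_{F}\Bigl(\frac{-n\dega(F)}{\codim_E(F)\,h(x)}\Bigr).\]
You instead carry $h(x)=\dega(E)-(n+1)\dega(L)$ with $\dega(E)=-\frac{n+1}2\log|\Delta_\KK|$ and defer the issue as a ``discriminant correction''; since the statement contains no discriminant term, you must either adopt the paper's normalisation $h(x)=-(n+1)\dega(D)$ or track the $\dega(E)$ contribution through both occurrences explicitly --- hedging here leaves the proof of the stated equality incomplete. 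There are also sign and index slips to fix before the computation can be trusted: $\dega(G_F)=\dega(F/L)-\dim(F/L)\dega(L)$ (not $+$), and the minimal slope is $\min_F\bigl(\dega(T_x\PP^n)-\dega(G_F)\bigr)/\codim_E(F)$, the denominator being $\codim_E(F)=n-\dim(G_F)$, not $n-\codim_E(F)$.
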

\begin{rema}
  \label{rema.dim.un}
  Si $n=1$, l'unique~$F$ qui intervient est la droite vectorielle
  correspondant \`a~$x$ ce qui redonne $l(x)=\frac 12+\frac12=1$,
  formule qui d\'ecoule directement de la d\'efinition de la libert\'e dans
  ce cas.
\end{rema}
\begin{lemm}\label{lemm.6.2}
  Pour tout sous-espace vectoriel~$F$ de~$E$ on a la relation
  \[-\dega(F)\geq\frac{\dim(F)}2\log|\Delta_\KK|.\]
\end{lemm}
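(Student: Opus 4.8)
The inequality is a statement about the adelic lattice $F$ with the norm induced from $E=\KK^{n+1}$, and the key point is that at the archimedean places the induced norm is essentially standard while at the finite places the induced lattice is exactly $F\cap\mathcal O_\KK^{n+1}$, whose covolume is controlled. First I would unwind the definition: by \eqref{equa.pente.droite} applied to $\det(F)=\exterieur^{\dim F}F$, we have $\dega(F)=-\sum_{w}\log\Vert e\Vert_w$ for any nonzero $e\in\det(F)$. I would pick $e=f_1\wedge\dots\wedge f_d$ (with $d=\dim F$) where $(f_1,\dots,f_d)$ is an $\mathcal O_\KK$-basis of the projective module $M=F\cap\mathcal O_\KK^{n+1}$ — or more precisely, since $M$ may not be free, I would work with $\det(M)$ as a rank-one $\mathcal O_\KK$-module and take a generator of it locally, reducing to the fractional-ideal bookkeeping. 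At every finite place $w$ the induced norm on $E\otimes\KK_w$ is the max-norm attached to $\mathcal O_\KK^{n+1}\otimes\mathcal O_w$, so the induced $\mathcal O_w$-lattice in $F\otimes\KK_w$ is $M\otimes\mathcal O_w$ (this is the content of Remark~\ref{rema.II.3.11} together with Example~\ref{exem.4.4.g} for the exterior power: the unit ball in $\det(F\otimes\KK_w)$ is $\det(M\otimes\mathcal O_w)$). Hence the finite-place contribution to $-\dega(F)$ is exactly $-\log$ of the norm of the ideal $\det(M)$, which is $\geq 0$, and in fact the finite part of $-\dega(F)$ equals $\log\card(\mathcal O_\KK^d/M')$ for a suitable sublattice — non-negative in any case.

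The real content is the archimedean contribution, and this is where the discriminant enters. At each archimedean place $w$ the norm on $E\otimes\KK_w$ is the standard Euclidean (resp.\ the square of the standard Hermitian) norm, and the induced norm on $F\otimes\KK_w$ is its restriction to a subspace, hence again standard Euclidean/Hermitian. Therefore the lattice $\iota(M)\subset F_\RR=\bigoplus_{w\mid\infty}F\otimes\KK_w$ sits inside the standard Euclidean space, and by the description of the degree in the Remarks following the degree definition, $\dega(F)=-\log\Vol(F_\RR/\iota(M))$. Now $M\supseteq I\cdot(F\cap\mathcal O_\KK^{d})$-type considerations are not even needed: the point is simply that $\iota(M)$ contains a sublattice isometric (after a standard-orthonormal choice of coordinates on $F\otimes\KK_w$, which is legitimate precisely because the induced forms are standard) to $d$ copies of the standard embedding $\iota(\mathcal O_\KK)\hookrightarrow \KK\otimes_\QQ\RR$, up to a lattice of covolume $\geq 1$ coming from the finite places. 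Since $\Vol((\KK\otimes\RR)/\iota(\mathcal O_\KK))=2^{-r_2}\sqrt{|\Delta_\KK|}$ with the normalization of the paper (recall $|\cdot|_w$ at a complex place is the square of the modulus, which is exactly what makes $\dega(\mathcal O_\KK^{n+1})=-\tfrac{n+1}{2}\log|\Delta_\KK|$ as recorded just before Proposition~\ref{prop.6.1}), we get $\Vol(F_\RR/\iota(M))\leq |\Delta_\KK|^{d/2}$ up to the harmless finite-place factor, i.e.\ $-\dega(F)\geq \tfrac{d}{2}\log|\Delta_\KK|$, which is the claim.

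In practice the cleanest route is to bypass volumes entirely and argue by the functoriality of $\dega$ under the embedding $F\hookrightarrow E$: give $F$ first the \emph{restricted} adelic structure (Example~\ref{exem.4.4.e}) from $E$, which is exactly the structure in the statement, and compare with $\dega$ computed via $\det$. One has the exact sequence $0\to F\to E\to E/F\to 0$ of adelically normed bundles over $\Spec(\KK)$ (Examples~\ref{exem.4.4.e}, \ref{exem.4.4.f}), giving $\dega(E)=\dega(F)+\dega(E/F) - (\text{defect})$; but the defect is non-positive because the quotient norm only shrinks, so $\dega(F)\leq \dega(E)-\dega(E/F)$, and then one needs a \emph{lower} bound on $\dega(F)$, which goes the other way — so this exact-sequence shortcut gives an upper bound, not the lower bound we want. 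Hence I would commit to the direct lattice argument: the main obstacle is purely bookkeeping of normalizations — making sure the factor $2^{-r_2}$ from the complex places is handled consistently with the paper's convention $|x|_w=|\,\cdot\,|^2$, and that the non-freeness of $M$ (when $\KK$ has class number $>1$) is absorbed into the inequality rather than the equality. Both are routine once one notes that replacing $M$ by the free sublattice $\mathcal O_\KK\cdot(M\cap \text{line}) \oplus \dots$ only decreases the covolume, hence only strengthens the needed lower bound on $-\dega(F)$.
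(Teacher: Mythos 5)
Your setup (reducing to a lower bound on the covolume of $M_F=F\cap\mathcal O_\KK^{n+1}$ in $F_\RR$ with the induced Euclidean structure, so that $-\dega(F)=\log\Vol(F_\RR/\iota(M_F))$) is the same as the paper's, and your remark that the exact-sequence route only gives an upper bound is correct. But the heart of your argument is broken, in two ways. First, the claimed containment is backwards and false: $\iota(M_F)$ does not in general contain a sublattice isometric to $d$ copies of $\iota(\mathcal O_\KK)$ (take $F$ the line spanned by $(1,N)$ in $\KK^2$ with $N$ a large integer; then $M_F=\mathcal O_\KK\cdot(1,N)$ and its shortest vectors are long). Second, and more fundamentally, even granting that containment it would give $\Vol(F_\RR/\iota(M_F))\leq |\Delta_\KK|^{d/2}$, i.e.\ $-\dega(F)\leq\frac d2\log|\Delta_\KK|$ --- the opposite of what is wanted, since $-\dega(F)$ equals $+\log\Vol(F_\RR/\iota(M_F))$ and the lemma asserts a \emph{lower} bound on this covolume. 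Your closing reduction has the same orientation problem: replacing $M_F$ by a free sublattice \emph{increases} the covolume of the quotient, and a lower bound on the covolume of a sublattice says nothing about the covolume of $M_F$ itself unless the index is controlled.

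The missing idea, which is the paper's one-line proof, is to use a \emph{contracting map out of} $F$ rather than a lattice inside $M_F$: choose coordinates $I\subset\{0,\dots,n\}$ so that the projection $(x_0,\dots,x_n)\mapsto(x_i)_{i\in I}$ is an isomorphism of $F$ onto $\KK^{\dim F}$. With the standard norms this projection is an orthogonal projection at each archimedean place, hence does not increase covolumes, and it sends $M_F$ into $\mathcal O_\KK^{\dim F}$, a lattice of covolume $\Vol(\KK\otimes_\QQ\RR/\mathcal O_\KK)^{\dim F}$. Therefore $\Vol(F_\RR/M_F)\geq\Vol(\KK\otimes_\QQ\RR/\mathcal O_\KK)^{\dim F}$, which is exactly $|\Delta_\KK|^{\dim(F)/2}$ in the paper's normalization (the same normalization that gives $\dega(E)=-\frac{n+1}2\log|\Delta_\KK|$; note that your factor $2^{-r_2}$ does not appear here, a secondary bookkeeping slip). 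Both inequalities go the right way precisely because $M_F$ is mapped \emph{into} $\mathcal O_\KK^{\dim F}$ by a norm-nonincreasing isomorphism, which is the step your proposal lacks.
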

\begin{proof}
  Choisissons une partie~$I$ de l'ensemble $\{0,\dots,n\}$ de sorte
  que la projection $(x_0,\dots,x_n)\mapsto (x_i)_{i\in I}$
  d\'efinisse un isomorphisme de $\KK$-espaces vectoriels
  de~$F$ sur~$\KK^{\dim(F)}$. Compte tenu des normes choisies,
  cette projection induit des projections
  orthogonales lorsqu'on tensorise par un compl\'et\'e archim\'edien.
  L'image de l'intersection $M_F=F\cap\mathcal O_\KK^{n+1}$ est contenue
  dans $\mathcal O_\KK^{\dim(F)}$. Donc
  \[\Vol(F_\RR/M_F)\geq \Vol(\KK\otimes_\QQ\RR/\mathcal O_\KK)^{\dim(F)}.
  \qquad\qed\]
  \noqed
\end{proof}
\begin{proof}[D\'emonstration de la proposition~\ref{prop.6.1}]
  Notons~$D$ la droite vectorielle de~$E$ correspondant \`a~$x$.
  L'espace tangent \`a $\PP^n(E)$ en~$x$ est canoniquement isomorphe
  au quotient $D\dual\otimes E/\KK$ o\`u l'injection de $\KK$ dans
  le produit tensoriel $D\dual\otimes E$ est la compos\'ee de l'isomorphisme
  canonique de $\KK$ sur $D\dual\otimes D$ et du plongement de
  $D\dual\otimes D$ dans $D\dual\otimes E$.

  Soit $F'$ un sous-espace vectoriel de $T_x\PP^n(E)$. On note
  $\widetilde F'$ son image inverse dans $D\dual\otimes E$.
  Il existe un unique sous-espace vectoriel~$F$ de~$E$ tel que
  $\widetilde F'$ soit $D\dual\otimes F$. L'espace vectoriel~$F'$
  est donc isomorphe au quotient $D\dual\otimes F/\KK$. Cet isomorphisme
  est compatible avec les normes ad\'eliques choisies,
  il en r\'esulte que
  \[\dega(F')=\dega(F)+\dim(F)\dega(D\dual)=\dega(F)-\dim(F)\dega(D)\]
  D'un autre c\^ot\'e, la hauteur de~$x$ n'est rien d'autre que
  le degr\'e arithm\'etique de l'espace tangent en ce point,
  c'est \`a dire qu'elle est donn\'ee par la formule
  $h(x)=-(n+1)\dega(D)$. Par d\'efinition
  du polyg\^one de Newton, on obtient que le valeur
  de $m_{T_x\PP^n_\KK}(n-1)$ est donn\'ee par
  \begin{align*}
    &\max_{F}\left(\frac{\dega(F)-
      \dim(F)\dega(D)+(n+1)\dega(D)}{n+1-\dim(F)}\right)-(n+1)\dega(D)\\
    =&-n\dega(D)+\max_F\left(\frac{\dega(F)}{\codim(F)}\right),
  \end{align*}
  o\`u $F$ parcourt l'ensemble des sous-espaces vectoriels stricts de~$E$
  contenant~$D$. Par cons\'equent,
  \[\mu_n(T_x\PP^n_\KK)=-\dega(D)+\min_{F}\left(\frac{-\dega(F)}{\codim(F)}
  \right).\]
  Compte tenu du lemme~\ref{lemm.6.2}, cette quantit\'e est positive
  puisque $|\Delta_\KK|\geq 1$.
  La d\'efinition de la libert\'e de~$x$ et l'expression
  pour la hauteur de~$x$ donne les relations
  \[l(x)=\frac{n}{h(x)}\mu_n(T_x(\PP^n_\KK))=\frac n{n+1}+
  \min_{F}
  \left(\frac{-n\dega(F)}{\codim_E(F)h(x)}\right)\]
  comme annonc\'e.
\end{proof}
\begin{corr}
  \label{corr.projectif}
  Avec la norme d\'efinie ci-dessus,
  la libert\'e d'un point rationnel de l'espace projectif de
  dimension~$n$ est minor\'ee par $\frac n{n+1}$.
\end{corr}
\begin{proof}
  Cela r\'esulte de la proposition, du lemme~\ref{lemm.6.2} et du fait que
  le discriminant v\'erifie $|\Delta_\KK|\geq 1$.
\end{proof}
\begin{listrems}
  \remarque
  Ce r\'esultat d\'epend de la m\'etrique ad\'elique choisie; en effet,
  il est possible
  de modifier une m\'etrique ad\'elique de fa\c con \`a donner une
  valeur arbitraire dans $[0,1]$ \`a la libert\'e d'un point fix\'e.
  Toutefois compte-tenu du lemme~\ref{lemm.5.2}, pour tout espace projectif
  $\PP$ de dimension~$n$ muni d'une m\'etrique ad\'elique et tout 
  $\alpha<\frac n{n+1}$, l'ensemble
  \[\{\,x\in\PP(\KK)\mid l(x)<\alpha\,\}\]
  est fini.
  \remarque Notons \'egalement que si l'on choisit un sous-espace vectoriel
  strict~$F$ de~$E$, alors la libert\'e d'un point~$x$
  de~$\PP(F)\subset\PP(E)$ tend
  vers $\frac n{n+1}$ lorsque sa hauteur tend vers $+\infty$.
  Le lemme~\ref{lemm.proj.variante} qui suit montre que ce comportement
  est \og atypique\fg.
\end{listrems}
\begin{corr}
  Pour tout choix de m\'etrique ad\'elique,
  l'espace projectif v\'erifie la formule empirique~\eqref{equ.laformule}
  et la distribution empirique~\eqref{equ.distribution}.
\end{corr}
\begin{proof}
  Rappelons que $\varepsilon(B)$ tend vers~$0$ lorsque~$B$
  tend vers $+\infty$.
  Pour la hauteur choisie dans
  ce paragraphe, compte tenu du corollaire~\ref{corr.projectif},
  il existe un nombre r\'eel $B_0$ tel que
  l'ensemble des points rationnels de l'espace projectif v\'erifiant
  $l(P)\leq \varepsilon(B)$ soit vide pour tout
  $B>B_0$. Pour une hauteur arbitraire,
  lorsque $B>B_0$, il
  r\'esulte de la proposition~\ref{prop.5.2} que tout
  point rationnel de l'espace projectif tel que
  $l(P)\leq \varepsilon(B)$ a une hauteur born\'ee
  par une constante r\'eelle.
  Le r\'esultat d\'ecoule alors
  des propositions~6.1.1 et du corollaire~6.2.17
  de \cite{peyre:fano}, qui se basent en partie sur l'\'etude de
  S.~Schanuel \cite{schanuel:heights}.
\end{proof}
\subsubsection{libert\'e moyenne}
Nous allons maintenant d\'emontrer que
le nombre de points de l'espace projectif dont la libert\'e
v\'erifie $l(x)<1-\eta$
est n\'egligeable devant $B$ et donne donc une contribution n\'egligeable.
\begin{prop}
  \label{lemm.proj.variante}
  Il existe une constante $C>0$ telle que, pour tout $\eta>0$
  on ait la majoration
  \[\card\{\,x\in\PP^n(\KK)\mid H(x)\leq B\text{ et }l(x)<1-\eta\,\}
  <CB^{1-\eta}\]
  pour tout nombre r\'eel $B\geq 1$.
\end{prop}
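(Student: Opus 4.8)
The plan is to feed the explicit formula of Proposition~\ref{prop.6.1} into uniform counting estimates: Schanuel's count of rational points of bounded height on a linear subvariety, and W.~M.~Schmidt's count of rational subspaces of bounded height (in Thunder's form over a number field). We may assume $0<\eta<\frac1{n+1}$, for when $\eta\geq\frac1{n+1}$ Lemma~\ref{lemm.6.2} forces every term $\frac{-n\dega(F)}{\codim_E(F)h(x)}$ in Proposition~\ref{prop.6.1} to be $\geq 0$ (for $x$ with $h(x)>0$), so $l(x)\geq\frac n{n+1}\geq1-\eta$ and the set reduces to the $O_n(1)$ points with $h(x)=0$ (which occur only for $\KK=\QQ$), giving the bound trivially. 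Fix then $0<\eta<\frac1{n+1}$. For $x$ with $h(x)>0$, Proposition~\ref{prop.6.1} gives $l(x)<1-\eta$ if and only if there is a strict $\KK$-subspace $F\subseteq E=\KK^{n+1}$ containing the line $D_x$ attached to $x$ with $-\dega(F)<\bigl(\frac1{n+1}-\eta\bigr)\frac{\codim_E(F)h(x)}n$; writing $d=\dim F$ and $c=n+1-d$, and using $h(x)=\log H(x)\leq\log B$ together with $\frac1{n+1}-\eta>0$, this forces $\exp(-\dega(F))<U_d$, where $U_d=B^{(\frac1{n+1}-\eta)c/n}\geq1$. Counting each relevant $x$ once for every admissible pair $(d,F)$,
\[\card\{\,x\in\PP^n(\KK)\mid H(x)\leq B,\ l(x)<1-\eta\,\}\leq O_n(1)+\sum_{d=1}^n\ \sum_{\substack{F\subseteq E,\ \dim F=d\\ \exp(-\dega(F))<U_d}}\card\{\,x\in\PP(F)(\KK)\mid H(x)\leq B\,\}.\]

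Next I would insert two uniform inputs. Since the metric here is the standard one, $H$ is the $(n+1)$st power of the euclidean norm, and a point of $\PP(F)(\KK)$ is represented by a vector of the lattice $F\cap\mathcal O_\KK^{n+1}$, of covolume $\exp(-\dega(F))$. A uniform form of Schanuel's estimate then yields, with implied constant depending only on $n$ and $\KK$,
\[\card\{\,x\in\PP(F)(\KK)\mid H(x)\leq B\,\}\ll\frac{B^{d/(n+1)}}{\exp(-\dega(F))}+B^{(d-1)/(n+1)},\]
the leading term being the lattice-point count in a ball of radius $\asymp B^{1/(n+1)}$ and the error coming from the smaller successive minima, which are $\geq1$ and hence contribute a bound independent of $F$. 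Second, by Schmidt's theorem on rational subspaces of bounded height (equivalently the anticanonical point count on the Grassmannian $\Gr(d,n+1)$, known over any number field), $\card\{\,F:\dim F=d,\ \exp(-\dega(F))\leq U\,\}\ll U^{n+1}$ for $U\geq1$, whence by partial summation $\sum_{\exp(-\dega(F))<U}\exp(\dega(F))\ll U^n$, again with implied constants depending only on $n$ and $\KK$.

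Plugging these in, the inner double sum for a given $d$ is $\ll B^{d/(n+1)}U_d^{\,n}+B^{(d-1)/(n+1)}U_d^{\,n+1}$. Using $U_d^{\,n}=B^{(\frac1{n+1}-\eta)c}$ and $U_d^{\,n+1}=B^{(\frac1{n+1}-\eta)c(n+1)/n}$, the exponents are
\[\frac d{n+1}+\Bigl(\frac1{n+1}-\eta\Bigr)c=1-\eta c\leq1-\eta,\qquad \frac{d-1}{n+1}+\Bigl(\frac1{n+1}-\eta\Bigr)\frac{c(n+1)}n=\frac{n^2+n+1-d}{n(n+1)}-\eta\,\frac{c(n+1)}n\leq1-\eta,\]
both inequalities using $c=n+1-d\geq1$ (and $B\geq1$), the second also using $\frac{n^2+n+1-d}{n(n+1)}\leq1$ and $\frac{c(n+1)}n\geq1$. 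Since the outer sum has only $n$ terms and none of the implied constants depends on $\eta$, summing over $d$ and absorbing the $O_n(1)$ points of height $1$ produces a constant $C=C(n,\KK)>0$ with $\card\{\,x\in\PP^n(\KK)\mid H(x)\leq B,\ l(x)<1-\eta\,\}<CB^{1-\eta}$ for all $\eta>0$ and all $B\geq1$.

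The main obstacle is the first uniform input: running Schanuel's counting argument on $\PP(F)$ with fully explicit dependence on $F$, so that the main term is exactly $B^{d/(n+1)}/\exp(-\dega(F))$ and the error is uniform in $F$ (and, over a general number field, uniform in the contribution of the units). Once this is available the Grassmannian count is used as a black box, and the manipulation of exponents — which is what makes the bound come out as $B^{1-\eta}$, the decisive case being hyperplanes, $d=n$ and $c=1$ — is routine.
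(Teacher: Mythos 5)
Your proposal is correct and follows essentially the same route as the paper's proof: the translation of $l(x)<1-\eta$ via Proposition~\ref{prop.6.1} into the existence of a subspace $F\ni D_x$ with $-\dega(F)\leq\frac cn(\frac1{n+1}-\eta)h(x)$, the Grassmannian count of such $F$ of bounded arithmetic degree, a Schanuel-type bound $\ll B^{\dim F/(n+1)}\exp(\dega F)+B^{(\dim F-1)/(n+1)}$ uniform in $F$, and the same partial summation and exponent bookkeeping (decisive case $c=1$). The uniform input you single out as the main obstacle is precisely what the paper's proof spends its effort on: it reduces Schanuel's fundamental domain for $\PP(F)$ to that of a fixed coordinate subspace $F_0$ by archimedean isometries, so that the Lipschitz data of the boundary are independent of $F$, and then applies the counting lemma of Masser--Vaaler \cite{masservaaler:countingII} together with a lower bound, independent of $F$, for the first minimum of $F\cap\mathfrak a^{n+1}$.
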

\begin{proof}
  Par la remarque~\ref{rema.dim.un}, le r\'esultat
  est vrai pour $n=1$. On suppose donc $n\geq 2$.
  Compte tenu de la proposition~\ref{prop.6.1},
  la condition $l(x)<1-\eta$ est \'equivalente
  \`a l'existence d'un sous-espace~$F$ de~$E$ de
  codimension~$c$ qui contient la droite $D$ correspondant \`a~$x$
  et tel que
  \[\frac n{n+1}-\frac{n\dega F}{c\,h(x)}\leq 1-\eta\]
  c'est-\`a-dire
  \begin{equation}
    \label{equ.maj.deg.sous}
    -\dega(F)\leq \frac cn\Bigl(\frac1{n+1}-\eta\Bigr)h(x).
  \end{equation}
  Consid\'erons un instant la grassmannienne $\Gr(n+1-c,E)$
  des sous-espaces de codimension~$c$ dans~$E$. L'espace tangent
  en un point~$F$ est canoniquement isomorphe \`a
  l'espace vectoriel $\Hom(F,E/F)$. On en d\'eduit
  que la hauteur logarithmique
  de $F$, relativement au fibr\'e anticanoniqe
  de la grassmannienne peut \^etre donn\'e par
  \[h(F)=-c\dega(F)+(n+1-c)\dega(E/F)=-(n+1)\dega(F).\]
  D'apr\`es l'estimation de points de hauteur born\'ee sur
  la grassmannienne (\cf \cite{fmt:fano}[\S2]),
  il existe donc une constante~$C$ telle que le nombre
  de sous-espaces~$F$ de codimension~$c$ de~$E$
  tels que $-\dega(F)\leq \log(P)$ soit major\'e par
  $CP^{n+1}$ pour tout $P>1$.

  D'autre part,
  pour un sous-espace~$F$ fix\'e, on reprend la d\'emonstration
  de S.~Schanuel dans~\cite{schanuel:heights}. Pour chaque classe
  d'id\'eaux $\overline{\mathfrak a}$ dans $\mathcal O_\KK$,
  on choisit un id\'eal~$\mathfrak a$ de~$\mathcal O_\KK$
  repr\'esentant
  $\overline{\mathfrak a}$.
  On consid\`ere alors l'ensemble $\mathcal D(F,\mathfrak a,B)$
  des droites $D$ avec $H(D)\leq B$ pour lesquelles la classe
  du $\mathcal O_\KK$-module $D\cap\mathcal O_\KK^{n+1}$ soit
  $\overline{\mathfrak a}$.
  Soit $\Lambda_{\mathfrak a}$ le $\mathcal O_\KK$-module
  $\mathfrak a^{n+1}\cap F$; il forme un r\'eseau du $\RR$-espace
  vectoriel $F\otimes_\QQ\RR$; la longueur minimale d'un vecteur
  de $\Lambda_{\mathfrak a}$ admet une minoration ind\'ependante de~$F$.
 
  Pour toute place archim\'edienne~$w$, notons
  $E_w=E\otimes_\KK\KK_w$. On identifie $E_\RR=E\otimes_\QQ\RR$
  avec $\bigoplus_{w\mid\infty}E_w$. Soit
  \[\llog:\prod_{w\mid\infty}E_w\setminus\{0\}\to\prod_{w\mid\infty}\RR\]
  l'application $(x_w)_{w\mid\infty}\mapsto(\log(\Vert x_w\Vert_w))_{w\mid\infty}$
  et soit $\sigma$ l'application lin\'eaire sur $L=\prod_{w\mid\infty}\RR$
  donn\'ee par $(x_w)_{w\mid\infty}\mapsto\sum_{w\mid\infty}x_w$
  et $\pr$ la projection orthogonale de~$L$ sur $\ker(\sigma)$.
  Rappelons que l'application \[\llog:\prod_{w\mid\infty}\KK_w^*\to L\] donn\'ee
  par $(x_w)_{w\mid\infty}\mapsto(\log(| x_w|_w))_{w\mid\infty}$
  envoie $\mathcal O_\KK^*$ sur un r\'eseau $\Lambda$ de $\ker(\sigma)$.
  On note $\Delta$ un domaine fondamental pour~$\Lambda$ dans $\ker(\sigma)$,
  donn\'e par une base de $\Lambda$.
  On consid\`ere le domaine $\mathcal B$ d\'efinit par la relation
  \[\mathcal B=\{\,y\in E_\RR\setminus\{0\}\mid \pr(\llog(y))
  \in\Delta\text{ et }
  \sigma(\llog(y))\leq 0\,\}\cup\{0\}.\]
  L'ensemble $\llog^{-1}(\pr^{-1}(\Delta))$ est
  le domaine fondamental pour l'action du groupe $\mathcal O_\KK^*$
  modulo les racines de l'unit\'e $\mu_\infty(\KK)$ tel qu'il est
  d\'efini par Schanuel \cite[p.~437]{schanuel:heights}.
  L'ensemble~$\mathcal B$ est invariant par les applications
  de la forme
  \begin{equation}
    \label{equ.isometrie}
    (x_w)_{w\mid\infty}\mapsto(\sigma_w(x_w))_{w\mid\infty}
  \end{equation}
  o\`u $\sigma_w$ est une isom\'etrie de l'espace euclidien $E_w$.

  L'espace $F_\RR=F\otimes_\QQ\RR$, vu comme sous-espace
  de $E_\RR$ est la somme directe des $F_w=F\otimes_\KK\KK_w$
  pour $w\mid\infty$. Il existe donc une famille d'isom\'etries
  $(\sigma_w)_{w\mid\infty}$ telle que l'application donn\'ee
  par~\eqref{equ.isometrie} envoie $F_\RR$ sur ${F_0}_\RR$,
  o\`u $F_0$ est donn\'e par l'annulation des~$c$ derni\`eres
  coordonn\'ees. L'ensemble $\mathcal B_F=F\otimes_\QQ\RR\cap \mathcal B$
  est donc isom\'etrique \`a l'ensemble $\mathcal B_{F_0}$.
  Le domaine $\mathcal B_F$ est donc un domaine born\'e
  dont le bord est une r\'eunion finie
  d'images d'applications de classe $\mathcal C^1$
  et le nombre
  d'applications lipshitziennes intervenant ainsi que les constantes
  de Lipschitz ne d\'epend pas de~$F$.
  
  Le cardinal de l'ensemble $\mathcal D(F,\mathfrak a,B)$
  est major\'e par le nombre de points
  de $\Lambda_\mathfrak a$ dans le domaine dilat\'e
  $T\mathcal B_F$, o\`u~le nombre r\'eel $T$ est d\'efini par
  la relation $B=N(\mathfrak a)^{-n-1}T^{[\KK:\QQ](n+1)}$.
  D'autre part,  le r\'eseau $\Lambda_\mathfrak a$
  est un sous-groupe d'indice fini de $\Lambda_{\mathcal O_\KK}$.
  \`A l'aide de \cite[p.~437, lemma 2]{masservaaler:countingII},
  on obtient donc une majoration du nombre de
  droites dans $\mathcal D(F,\mathfrak a,B)$ de la forme
  \[C\left(\frac{B^{\frac{\dim(F)}{n+1}}}{\exp(-\dega F)}
  +B^{\frac{\dim(F)-1}{n+1}}\right).\]
  \`A l'aide d'une sommation par partie on en d\'eduit
  que le cardinal des droites $D$ avec $H(D)\leq B$
  qui sont contenues dans un sous-espace~$F$ de codimension~$c$
  v\'erifiant~\eqref{equ.maj.deg.sous} est major\'e par
  \begin{multline*}
    C\Bigl(B^{n\frac cn\left(\frac 1{n+1}-\eta\right)+\frac {n+1-c}{n+1}}+
    B^{(n+1)\frac cn\left(\frac 1{n+1}-\eta\right)+\frac{n+1-c-1}{n+1}}\Bigr)\\
    =C(B^{1-c\eta}+B^{1-\frac{n-c}{n(n+1)}-\frac{n+1}nc\eta}).
  \end{multline*}
  La proposition s'obtient en sommant
  cette majoration sur $c\in \{1,\dots,n\}$.
\end{proof}
\begin{rema}
  Cette proposition implique que le cardinal de l'ensemble
  \[\PP^n(\KK)_{\mu_{\max}\leq \log(B)}
  =\{\,x\in\PP^n(\KK)\mid\mu_{\max}(x)\leq\log(B)\,\}\]
  est minor\'e par une expression de la forme $C_\varepsilon B^{n(1-\varepsilon)}$
  pour tout $\varepsilon>0$. En effet, si $l(x)\geq 1-\eta$, alors
  $\mu_{\min}(x)\geq\frac{h(x)}n(1-\eta)$ et donc
  \[\mu_{\max}(x)\leq h(x)-(n-1)\mu_{\min}(x)\leq \frac {h(x)}n(1+(n-1)\eta).\]
  Par cons\'equent, si on pose $\varepsilon=1-(1+(n-1)\eta)^{-1}$,
  les conditions $l(x)\geq 1-\eta$ et $H(x)\leq B^{n(1-\varepsilon)}$
  entra\^\i nent la condition $\mu_{max}(x)\leq \log(B)$.
  Il semble raisonnable d'esp\'erer que le cardinal de cet ensemble
  est en fait \'equivalent \`a une expression de la forme
  $C'(\PP^n_\KK)B^n$.
\end{rema}
\begin{corr}
  La moyenne de la libert\'e, d\'efinie par
  \[\frac 1{\card\PP^n(\QQ)_{H\leq B}}\sum_{x\in\PP^n(\QQ)_{H\leq B}}l(x)\]
  converge vers~$1$ lorsque~$B$ tend vers~$+\infty$.
\end{corr}
\begin{proof}
  Par la proposition pr\'ec\'edente, pour tout $\eta>0$,
  et tout $B\geq 1$,
  \[1\geq \frac 1{\card\PP^n(\QQ)_{H\leq B}}\sum_{x\in\PP^n(\QQ)_{H\leq B}}l(x)
  \geq (1-\eta)(1-CB^{-\eta}).\qquad\qed\]
  \noqed
\end{proof}
%% End of file 07-01_projectif.tex
%% Including file 07-02_produit.tex
\Subsection{Le produit de vari\'et\'es}
\subsubsection{Pr\'eliminaires}
Nous allons commencer par un lemme classique sur les pentes
d'une somme directe.
\begin{lemm}
  \label{lemm.pente.produit}
  Soient $E_1$ et $E_2$ des espaces vectoriels munis de nor\-mes ad\'eliques
  classiques de dimensions respectives $n_1$ et $n_2$.
  On d\'esigne par $\mathcal P(E_1)$ et $\mathcal P(E_2)$ les polyg\^ones
  de Newton correspondants. Alors le polyg\^one de Newton
  de la somme $E_1\oplus E_2$ est
  \[\mathcal P(E_1\oplus E_2)=\mathcal P(E_1)+\mathcal P(E_2).\]
  En particulier, on a la relation
  $\mu_{n_1+n_2}(E_1\oplus E_2)=\min(\mu_{n_1}(E_1),\mu_{n_2}(E_2))$.
\end{lemm}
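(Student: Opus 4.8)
The statement is that the Newton polygon of a direct sum of two adelic lattices is the Minkowski sum of the two Newton polygons, and that in particular the minimal slope of the sum is the minimum of the two minimal slopes. I would prove the equality of polygons by a double inclusion, using the explicit description of the adelic norm on the direct sum from Example~\ref{exem.fibre.somme} together with its compatibility with the exterior power construction from Example~\ref{exem.4.4.g}.

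\textbf{Inclusion $\mathcal P(E_1)+\mathcal P(E_2)\subseteq\mathcal P(E_1\oplus E_2)$.} This direction is the easy one. If $F_1\subseteq E_1$ and $F_2\subseteq E_2$ are subspaces with their induced adelic structures, then $F_1\oplus F_2$ is a subspace of $E_1\oplus E_2$, and I claim $\dega(F_1\oplus F_2)=\dega(F_1)+\dega(F_2)$. Indeed, the norm on $\det(F_1\oplus F_2)=\det(F_1)\otimes\det(F_2)$ induced from the direct-sum norm is, by the $\lambda$-ring compatibility recalled in Example~\ref{exem.4.4.g}, the tensor product of the norms on $\det(F_1)$ and $\det(F_2)$; hence the degrees add by \eqref{equa.pente.droite}. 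So the point $(\dim F_1,\dega F_1)+(\dim F_2,\dega F_2)$ lies in the convex hull defining $\mathcal P(E_1\oplus E_2)$. Since $\mathcal P(E_i)$ is the convex hull of such points $(\dim F_i,\dega F_i)$, and a Minkowski sum of convex hulls of two finite sets is the convex hull of the pairwise sums, this gives the inclusion. One must also note both polygons have the same endpoints $(0,0)$ and $(n_1+n_2,\dega E_1+\dega E_2)$.

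\textbf{Inclusion $\mathcal P(E_1\oplus E_2)\subseteq\mathcal P(E_1)+\mathcal P(E_2)$.} Equivalently, $m_{E_1\oplus E_2}\le m_{E_1}\mathbin{\square}m_{E_2}$, the infimal convolution (concave version) of the two roof functions; equivalently still, for every subspace $G\subseteq E_1\oplus E_2$ one has $(\dim G,\dega G)$ lying below the sum polygon. Write $p_i:E_1\oplus E_2\to E_i$ for the two projections and $F_1=G\cap E_1$ (kernel of $p_2|_G$), $F_2=p_2(G)\subseteq E_2$, so that $0\to F_1\to G\to F_2\to 0$ with $\dim G=\dim F_1+\dim F_2$. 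The projection $p_2|_G$ is norm-nonincreasing at every place (for finite $w$ this is the max-norm, for archimedean $w$ it is an orthogonal projection resp.\ a projection in the hermitian case), so passing to top exterior powers gives $\dega G\le \dega F_1+\dega(F_2')$ where $F_2'$ is $F_2$ with the norm induced as a quotient of $G$; but that quotient norm dominates the subspace norm on $F_2\subseteq E_2$ (again because $p_2$ is norm-nonincreasing), hence $\dega(F_2')\le\dega(F_2)$ where now $F_2$ carries its subspace structure. Therefore $(\dim G,\dega G)\le(\dim F_1,\dega F_1)+(\dim F_2,\dega F_2)$, and the right side lies in $\mathcal P(E_1)+\mathcal P(E_2)$. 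This proves the reverse inclusion.

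\textbf{The slope statement.} Given the polygon identity, $m_{E_1\oplus E_2}=m_{E_1}\mathbin{\square}m_{E_2}$ as concave piecewise-affine functions on $[0,n_1+n_2]$ agreeing at the endpoints, so the multiset of slopes of $E_1\oplus E_2$ is the union (with multiplicity) of the slopes of $E_1$ and those of $E_2$, reordered decreasingly. In particular the smallest slope overall is $\min(\mu_{n_1}(E_1),\mu_{n_2}(E_2))$, which is the asserted formula for $\mu_{n_1+n_2}(E_1\oplus E_2)$. The main obstacle is the careful bookkeeping in the second inclusion: one must keep straight that $F_2$ appears with three potentially different adelic structures (subobject of $E_2$, quotient of $G$, subquotient of $E_1\oplus E_2$) and verify the two norm comparisons $\dega(F_2^{\mathrm{quot}})\le\dega(F_2^{\mathrm{sub}})$ and the exterior-power inequality $\dega G\le\dega F_1+\dega F_2^{\mathrm{quot}}$, each of which rests on Proposition~\ref{prop.maj.morph} applied to the relevant projection together with functoriality of the exterior-power norms; the archimedean complex case, where the "triangle inequality" is only $\Vert x+y\Vert^{1/2}\le\Vert x\Vert^{1/2}+\Vert y\Vert^{1/2}$, needs the remark that the underlying hermitian form still makes orthogonal projection norm-nonincreasing, so no real difficulty arises there.
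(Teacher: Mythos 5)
Your proof is correct and follows essentially the same route as the paper: for a subspace $G\subseteq E_1\oplus E_2$ the paper uses the same exact sequence $0\to G\cap E_i\to G\to p_j(G)\to 0$ together with the fact that the projection is $1$-lipschitzienne for the direct-sum norms of l'exemple~\ref{exem.fibre.somme}, concluding that $\mathcal P(E_1\oplus E_2)$ is the convex hull of the points $(\dim F_1+\dim F_2,\dega F_1+\dega F_2)$. Your bookkeeping of the three structures on $F_2$ and your reading of the slope statement via the union of slope multisets are just a more explicit version of the paper's argument (which instead evaluates $m_{E_1\oplus E_2}(n_1+n_2-1)$ directly), so there is nothing substantive to add.
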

\begin{proof}
  Soit $F$ un sous-espace vectoriel de $E_1\oplus E_2$.
  Notons $p_1$ la projection de $E_1\oplus E_2$ sur $E_1$. On a une suite
  exacte
  \[0\longrightarrow E_2\cap F\longrightarrow F\longrightarrow p_1(F)
  \longrightarrow 0.\]
  Par d\'efinition de la somme de fibr\'es ad\'eliquement norm\'es (\cf
  l'exemple \ref{exem.fibre.somme}), pour toute place $w$,
  la projection induite $(E_1\oplus E_2)\otimes \KK_w$ sur $E_2\otimes \KK_w$
  est 1-lipschitzienne, et, par cons\'equent,
  $\dega(F/(E_2\cap F))\leq\dega(p_1(F))$ et
  $\dega(F)\leq\dega(E_2\cap F\oplus p_1(F))$.
  \par
  Le polyg\^one $\mathcal P(E_1\oplus E_2)$ est donc l'enveloppe
  convexe de l'ensemble des points de la forme $(\dim(F_1)+\dim(F_2),
  \dega(F_1)+\dega(F_2))$ o\`u $F_i$ est un sous-espace vectoriel de $E_i$
  pour $i\in{1,2}$.
  Cela d\'emontre la premi\`ere assertion. Compte tenu de
  la d\'efinition des fonctions~$m$, on obtient l'\'egalit\'e
  \[m_{E_1\oplus E_2}(n_1+n_2-1)=\max(m_{E_1}(n_1)+\dega(E_2),
  \dega(E_1)+m_{E_2}(n_2))\]
  La seconde relation en d\'ecoule.
\end{proof}
\subsubsection{Un cas particulier}
Avant de passer au cas g\'en\'eral du produit de deux vari\'et\'es,
nous allons traiter le cas particulier d'un produit de droites projectives
qui illustre bien la notion de libert\'e.

\begin{prop}
  Soit $n$ un entier strictement positif. Pour toute application~$\varepsilon$
  de $\mathcal D$, la vari\'et\'e $(\mathbf P^1_\KK)^n$ v\'erifie
  la formule empirique~\ref{formule.empirique}.
\end{prop}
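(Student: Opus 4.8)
The strategy is to reduce the counting problem on $(\PP^1_\KK)^n$ equipped with an arbitrary adelic metric to the counting problem on each factor, exploiting the product structure both at the level of heights and at the level of slopes. First I would invoke Proposition~\ref{prop.5.2} to replace the given metric by the product of the standard metrics on each $\PP^1_\KK$ (as in Subsection~\ref{subsection.projectif}): since $l$ and $l'$ differ by $O(1/h(x))$, the set of $\varepsilon$-free points of height $\leq B$ for one metric and that for the other differ only by points whose freedom is within $O(1/\log B)$ of $\varepsilon(B)$, and a careful comparison of the two leftclose/rightopen thresholds, together with the fact that $\varepsilon\in\mathcal D$ satisfies $\log(B)^\alpha\varepsilon(B)\to+\infty$, shows this boundary contributes a negligible fraction of the main term. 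Thus it suffices to prove the formula for the product metric.

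**Key computation via slopes.** For the product metric, the tangent space $T_x((\PP^1_\KK)^n)$ at $x=(x_1,\dots,x_n)$ is the orthogonal direct sum $\bigoplus_{i=1}^n T_{x_i}\PP^1_\KK$. By Remark~\ref{rema.dim.un}, each $T_{x_i}\PP^1_\KK$ is a rank-one adelic bundle with $h(x_i)=\dega(T_{x_i}\PP^1_\KK)$, so its single slope equals $h(x_i)$. Applying Lemma~\ref{lemm.pente.produit} inductively, the Newton polygon of $T_x$ is the Minkowski sum of $n$ segments, hence $\mu_{\min}(x)=\min_{1\leq i\leq n} h(x_i)$ and $h(x)=\sum_{i=1}^n h(x_i)$. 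Therefore
\[
l(x)=\frac{n\,\min_i h(x_i)}{\sum_i h(x_i)}.
\]
The condition $l(x)\geq\varepsilon(B)$ with $H(x)\leq B$ then becomes: setting $B_i=H(x_i)$, we need $\prod_i B_i\leq B$ and $\min_i\log B_i\geq\frac{\varepsilon(B)}{n}\sum_i\log B_i$. Writing $h_i=\log B_i\geq 0$ and $T=\log B$, we must count tuples of $\PP^1(\KK)$-points with $\sum h_i\leq T$ and $\min_i h_i\geq\frac{\varepsilon(B)}{n}\sum_i h_i$.

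**Asymptotic evaluation.** Using Schanuel's theorem (\cite{schanuel:heights}), the number of points of $\PP^1(\KK)$ of exponential height in a dyadic-type range $[e^{a},e^{a+da}]$ grows like $C_{\PP^1}\, e^{a}\,da$ for $a\to+\infty$, with $C_{\PP^1}=C(\PP^1_\KK)$ the empirical constant. Hence $\card((\PP^1_\KK)^n(\KK)^{\elibre}_{H\leq B})$ is asymptotically
\[
C_{\PP^1}^{\,n}\int_{\Delta(B)} e^{h_1+\dots+h_n}\,dh_1\cdots dh_n,
\]
where $\Delta(B)=\{h_i\geq 0,\ \sum h_i\leq T,\ \min_i h_i\geq\frac{\varepsilon(B)}{n}\sum h_i\}$. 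The key point is that dropping the constraint $\min_i h_i\geq\frac{\varepsilon(B)}{n}\sum h_i$ changes this integral only by a factor $1+o(1)$: on the region where the integrand is largest, namely $\sum h_i$ close to $T$, the excluded set has at least one coordinate below $\frac{\varepsilon(B)}{n}T$, and integrating $e^{\sum h_i}$ over $\{h_j\leq \delta T,\ \sum h_i\leq T\}$ for one fixed $j$ gives a contribution bounded by a constant times $\delta T\cdot e^{T}\cdot T^{n-2}/(n-2)!$ (by comparison with the full simplex integral $e^{T}T^{n-1}/(n-1)!$), so the relative error from each of the $n$ coordinates is $O(\varepsilon(B))=o(1)$. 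The unconstrained integral equals $e^{T}T^{n-1}/(n-1)! - (\text{lower order})$, i.e.\ $\sim B(\log B)^{n-1}/(n-1)!$. Finally, since $\Pic((\PP^1_\KK)^n)$ has rank $t=n$ and $C((\PP^1_\KK)^n)=C_{\PP^1}^{\,n}/(n-1)!$ by multiplicativity of $\alpha$, $\beta$ and $\tau^{\Br}$ over products (the adelic space, the Brauer set, and all the relevant measures factor), we obtain $\card((\PP^1_\KK)^n(\KK)^{\elibre}_{H\leq B})\sim C((\PP^1_\KK)^n)\,B(\log B)^{n-1}$, which is the formula~\eqref{equ.laformule}.

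**Main obstacle.** The delicate point is making the two error estimates uniform and genuinely $o(1)$: namely, that the boundary layer where $l(x)$ is near $\varepsilon(B)$ (controlling the metric-independence reduction) and the region where some $h_i$ is anomalously small (controlling the passage from the constrained to the unconstrained integral) both have relative size $o(1)$. This is exactly where the two defining properties of the class $\mathcal D$ enter — decay to $0$ together with $\log(B)^\alpha\varepsilon(B)\to+\infty$ for all $\alpha\in\leftopen 0,1]$ — and one must check that the powers of $\log B$ arising from the $(n-1)$-dimensional simplex are absorbed. I expect the bookkeeping of these two competing effects, rather than any conceptual difficulty, to be the technical heart of the argument; the slope computation via Lemma~\ref{lemm.pente.produit} and the reduction via Proposition~\ref{prop.5.2} are routine once set up.
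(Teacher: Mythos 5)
Your main computation follows the paper's own proof essentially step for step: the freedom formula $l(x)=n\min_i h(x_i)/\sum_i h(x_i)$ obtained from Lemma~\ref{lemm.pente.produit}, the reduction to counting points of $\PP^1(\KK)^n$ with $(h(x_i))_i$ in the truncated simplex $\sum_i h_i\le\min\bigl(\log B,\tfrac n{\varepsilon(B)}\min_i h_i\bigr)$ via the Landau--Schanuel count on $\PP^1(\KK)$, the observation that removing the truncation changes the simplex integral only by a factor $1+O(\varepsilon(B))$, and the identity $C((\PP^1_\KK)^n)=C(\PP^1_\KK)^n/(n-1)!$. What you defer as bookkeeping is precisely where the paper does its quantitative work: it compares sum and integral by cutting $\RR_{\geq 0}^n$ into cubes of side $\eta$, keeps all error terms uniform in $\varepsilon$, and then takes $\eta=B^{-\delta\varepsilon(B)/(2n^2)}$, so that condition (ii) in the definition of $\mathcal D$ (applied with $\alpha=\tfrac12$, say) absorbs the extra powers of $\log B$; your outline identifies these ingredients correctly, so this part is the same argument, merely not executed.

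The one step you add and which the paper does not take, namely the preliminary reduction from an arbitrary adelic metric to the product metric via Proposition~\ref{prop.5.2}, does not work as sketched and is in any case not needed: the paper proves the proposition for the metric induced on $T\bigl((\PP^1_\KK)^n\bigr)$ by the standard metrics on the factors (exemple~\ref{exem.fibre.somme}). Proposition~\ref{prop.5.2} controls only the freedom, up to $O(1/h(x))$; changing the metric also changes the anticanonical height (by a bounded additive amount on the logarithmic scale, hence by a constant factor in the number of points of height at most $B$) and changes the constant $C(V)$ itself through $\tau^{\Br}(V)$, so the asymptotic \eqref{equ.laformule} for one metric cannot be transferred to another merely by controlling the boundary layer where $l(x)$ is close to $\varepsilon(B)$. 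This is unlike the projective-space case, where the freedom condition becomes vacuous for large $B$ and the count for an arbitrary metric is already available from \cite{peyre:fano}; for $(\PP^1_\KK)^n$ the freedom condition genuinely cuts the counting region, and an arbitrary-metric statement would require redoing the count for a non-product height, not an appeal to Proposition~\ref{prop.5.2}.
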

\begin{proof}
  Dans cette preuve,~$V$ d\'esigne $(\mathbf P^1_\KK)^n$.
  Le fibr\'e tangent $TV$ est isomorphe \`a la somme des images inverses des
  fibres $\mathcal O_{\mathbf P^1_\KK}(2)$ et on munit~$V$ de la m\'etrique induite
  par les m\'etriques utilis\'ees pour $\mathbf P^1_\KK$, suivant l'exemple~\ref{exem.fibre.somme}.
  Soit $\boldsymbol x=(x_1,\dots,x_n)$ un point rationnel de~$V$. L'espace
  tangent $T_{\boldsymbol x}V$ est isomorphe \`a $\bigoplus_{i=1}^nT_{x_i}\mathbf P^1_\KK$
  et on d\'eduit du lemme pr\'ec\'edent que $\mu_n(\boldsymbol x)=\min_{1\leq i\leq n}(\mu_1(x_i))$.
  Mais $\mu_1(x_i)=h(x_i)$ ce qui donne la formule
  \[l(\boldsymbol x)=\frac{n\min_{1\leq i\leq n}(h(x_i))}{\sum_{i=1}^nh(x_i)}.\]
  Fixons $B\geq 2$ et $\varepsilon$ un nombre r\'eel strictement positif.
  Il nous faut donc estimer le cardinal de l'ensemble
  \[
  \left\{\,(x_1,\dots,x_n)\in\mathbf P^1(\KK)^n\left|
  \sum_{i=1}^nh(x_i)\leq \min\Bigl(\log(B),\frac n\varepsilon\min_{1\leq i\leq n}(h(x_i))
  \Bigr)\,\right.\right\},\]
  qu'on note $V(\KK)_{H\leq B}^{l\geq\varepsilon}$,
  \`a partir de l'estimation de E.~Landau \cite{landau:elementare}
  \[\card\{\,x\in\mathbf P^1(\KK)\mid h(x)\leq \log(B)\,\}=cB+O(B^{1-\delta})\]
  avec $c=C(\mathbf P^1_\KK)$ et $\delta>0$. On fixe temporairement~$\eta$
  avec $0<\eta<1$.
  Soit $\boldsymbol t=(t_1,\dots,t_n)\in\RRp^n$.
  On \'ecrit $|\boldsymbol t|=\sum_{i=1}^nt_i$. On a donc
  \begin{equation}
    \label{equ.produit.pun}
    \begin{split}
      &\card\left\{\,(x_1,\dots,x_n)\in\mathbf P^1(\KK)^n\left|
      (h(x_i))_{1\leq i\leq n}\in
      \prod_{i=1}^n[t_i,t_i+\eta]\,\right.\right\}\\
      &=c^ne^{|\boldsymbol t|}(e^\eta-1)^n
      +O(e^{|\boldsymbol t|-\delta\min_{1\leq i\leq n}(t_i)})\\
      &=c^ne^{|\boldsymbol t|}\eta^n+O(e^{|\boldsymbol t|}\eta^{n+1})
      +O(e^{|\boldsymbol t|-\delta\min_{1\leq i\leq n}(t_i)}).
    \end{split}
  \end{equation}
  On consid\`ere alors le simplexe compact
  $\Delta_\varepsilon(B)$ de $\RRp^n$ d\'efini par l'in\'egalit\'e
  \[\sum_{i=1}^nt_i\leq\min(\log(B),\frac n\varepsilon\min_{1\leq i\leq n}t_i).\]
  Notons que, si $\boldsymbol t\in\Delta_\varepsilon(B)$,
  le terme d'erreur de~\eqref{equ.produit.pun}
  est major\'e par
  $O(e^{|\boldsymbol t|}\eta^{n+1})+O(e^{(1-\delta\varepsilon/n)|\boldsymbol t|})$.
  Quadrillons maintenant $\RRp^n$ par des cubes de c\^ot\'e~$\eta$;
  le nombre de cubes rencontrant le bord de $\Delta_\varepsilon(B)$
  est major\'e par $O((\log(B)/\eta)^{n-1})$.
  En faisant une comparaison entre somme et int\'egrale on
  obtient donc l'estimation
  \[\card V(\KK)_{H\leq B}^{l\geq\varepsilon}=c^n\int_{\Delta_\varepsilon(B)}
  e^{|\boldsymbol t|}\haar{\boldsymbol t} +O(B\log(B)^n\eta)
  +O\left(\left(\frac{\log(B)}\eta\right)^nB^{1-\delta\varepsilon/n}\right),\]
  les constantes implicites dans les $O$ \'etant ind\'ependantes de
  $\varepsilon$. En prenant $\eta=B^{-\delta\varepsilon/{2n^2}}$ on obtient
  un terme d'erreur en $O(\log(B)^nB^{1-\delta\varepsilon/(2n^2)})$.
  L'int\'egrale vaut $BP_\varepsilon(\log(B))$ o\`u $P_\varepsilon$
  est un polyn\^ome de degr\'e $n-1$ et de coefficient dominant
  $\frac 1{(n-1)!}+O(\varepsilon)$. En utilisant l'\'egalit\'e
  $C(V)=\frac 1{(n-1)!}c^n$, on en d\'eduit la formule
  empirique~\ref{formule.empirique}.
\end{proof}
\begin{rema}
  Il convient de noter que cette d\'emonstration donne
  un excellent contr\^ole du terme d'erreur, mais avec une
  constante diff\'erente, si on
  consid\`ere les points de libert\'e strictement sup\'erieure
  \`a un nombre r\'eel $\varepsilon$ \emph{fix\'e}. En particulier,
  le polyn\^ome en $\log(B)$ dans ce cas ne provient que de l'int\'egrale
  $\int_{\Delta_\varepsilon(B)}e^{|\boldsymbol t|}\haar{\boldsymbol t}$.
  Cette situation est en contraste avec l'\'etude faite par
  S.~Pagelot~\cite{pagelot:nonthese} de $(\PP^1_\QQ)^2_{H\leq B}$.
  En effet, dans ce cas, un calcul direct d\'emontre que chaque
  fibre verticale ou horizontale au dessus d'un point rationnel~$P$ de
  la droite projective a une contribution \'equivalente
  \`a $\frac{C(\PP^1_\KK)}{H(P)}B$ dans l'estimation asymptotique et contribue
  donc au deuxi\`eme terme du polyn\^ome~$B$. Minorer la libert\'e
  des points d\'ecompt\'es par~$\varepsilon$ majore cette contribution par
  \[\frac{C(\PP^1_\KK)}{H(P)}\min(B,H(P)^{2/\varepsilon}).\]
\end{rema}

\subsubsection{Cas g\'en\'eral}
Revenons maintenant au cas g\'en\'eral.
Soient $V_1$ et $V_2$ de belles vari\'et\'es
de dimensions respectives $n_1$ et $n_2$ strictement positives.
On les suppose munies de m\'etriques
ad\'eliques.
Pour $i\in\{1,2\}$, on note $p_i$ la projection de $V_1\times V_2$ sur
$V_i$, $h_i$ la hauteur logarithmique d\'efinie
par la m\'etrique sur $V_i$ et $l_i$ la fonction libert\'e associ\'ee.
Comme le fibr\'e tangent $T(V_1\times V_2)$ est isomorphe
\`a la somme directe $p_1^*TV_1\oplus p_2^*TV_2$, on peut
munir $V_1\times V_2$ de la m\'etriques induite sur la somme directe
(\cf~exemple~\ref{exem.fibre.somme}).
\begin{prop}
  \label{prop.liberte.produit}
  Soient $(x_1,x_2)\in V_1\times V_2(\KK)$. Si $h_1(x_1)$ ou $h_2(x_2)$
  est n\'egatif alors $l(x_1,x_2)=0$. Dans le cas contraire,
  on a la relation
  \[l(x_1,x_2)=(n_1+n_2)
  \frac{\min(l_2(x_2)h_2(x_2)/n_2,l_1(x_1)h_1(x_1)/n_1)}{h_1(x_1)+h_2(x_2)}.\]
\end{prop}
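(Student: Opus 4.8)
The plan is to reduce everything to the behaviour of Newton polygons under direct sums, i.e.\ to Lemme~\ref{lemm.pente.produit}. First I would identify the tangent space of the product at $x=(x_1,x_2)$, equipped with the induced metric, with the direct sum of adelic lattices $T_{x_1}V_1\oplus T_{x_2}V_2$. This is just a matter of unwinding definitions: by construction the metric on $V_1\times V_2$ comes from the isomorphism $T_{V_1\times V_2}\cong p_1^*(T_{V_1})\oplus p_2^*(T_{V_2})$ together with the direct-sum norm of Exemple~\ref{exem.fibre.somme}, and by Exemple~\ref{exem.4.4.h} the fibre at $x$ of $p_i^*(T_{V_i})$ with its $w$-adic norm is exactly $T_{x_i}V_i$ with its norm; the formulas of Exemple~\ref{exem.fibre.somme} then say precisely that $T_xV=T_{x_1}V_1\oplus T_{x_2}V_2$ as adelic lattices. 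Lemme~\ref{lemm.pente.produit} then gives at once
\[\mu_{\min}(x)=\min(\mu_{\min}(x_1),\mu_{\min}(x_2)),\]
and, evaluating the polygons at their right endpoints (where $m_E$ takes the value $\dega(E)$), the additivity of the arithmetic degree
\[h(x)=h_1(x_1)+h_2(x_2)\]
(one may alternatively invoke the isometric isomorphism $\det(E\oplus E')\cong\det(E)\otimes\det(E')$ coming from Exemple~\ref{exem.4.4.g}).

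Next I would settle the first assertion. If $h_1(x_1)<0$ (the case $h_2(x_2)<0$ being symmetric), then $\mu_{\min}(x_1)\le\mu(x_1)=h_1(x_1)/n_1<0$, hence $\mu_{\min}(x)\le\mu_{\min}(x_1)<0$, and so $l(x)=0$ by the very definition of freedom.

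For the main formula I would assume $h_1(x_1)\ge 0$ and $h_2(x_2)\ge 0$; the degenerate subcase $h(x)=0$ forces $\mu_{\min}(x)\le\mu(x)=0$, hence $l(x)=0$, and may be put aside, so assume $h_1(x_1)+h_2(x_2)>0$. The one point needing a line of verification is the identity, for $i\in\{1,2\}$,
\[l_i(x_i)\,\frac{h_i(x_i)}{n_i}=\max(\mu_{\min}(x_i),0),\]
which holds because $l_i(x_i)=n_i\mu_{\min}(x_i)/h_i(x_i)$ when $\mu_{\min}(x_i)>0$ (and then $h_i(x_i)\ge n_i\mu_{\min}(x_i)>0$), while $l_i(x_i)=0$ when $\mu_{\min}(x_i)\le 0$. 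Granting this, I would split into two cases. If $\mu_{\min}(x_1)>0$ and $\mu_{\min}(x_2)>0$, then $\mu_{\min}(x)>0$ and
\[l(x)=\frac{\mu_{\min}(x)}{\mu(x)}=(n_1+n_2)\frac{\min(\mu_{\min}(x_1),\mu_{\min}(x_2))}{h_1(x_1)+h_2(x_2)},\]
which is the claimed formula after substituting $\mu_{\min}(x_i)=l_i(x_i)h_i(x_i)/n_i$. Otherwise, say $\mu_{\min}(x_1)\le 0$: then $\mu_{\min}(x)\le 0$, so $l(x)=0$, and the right-hand side of the stated formula also vanishes, since $\max(\mu_{\min}(x_1),0)=0$ and $\max(\mu_{\min}(x_2),0)\ge 0$, so the minimum occurring there is $0$.

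Since Lemme~\ref{lemm.pente.produit} is already at our disposal, there is no substantial obstacle: the argument is essentially bookkeeping. The only mild care required is the translation between the freedom $l_i(x_i)$, the minimal slope $\mu_{\min}(x_i)$ and its positive part, together with checking that the sign-degenerate cases ($\mu_{\min}(x_i)\le 0$, or $h(x)=0$) make both sides of the identity vanish simultaneously.
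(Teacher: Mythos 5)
Your proof is correct and follows essentially the same route as the paper: identify $T_{(x_1,x_2)}(V_1\times V_2)$ with the adelic direct sum $T_{x_1}V_1\oplus T_{x_2}V_2$, apply Lemme~\ref{lemm.pente.produit} to get $\mu_{\min}$ and the additivity of the degree, and translate via $\mu_{\min}(x_i)=l_i(x_i)h_i(x_i)/n_i$. The only difference is that you spell out the degenerate cases ($\mu_{\min}(x_i)\le 0$, $h(x)=0$) that the paper's one-line argument leaves implicit, which is a welcome but minor refinement.
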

\begin{proof}
  En effet, dans ce cas on a $\mu_{n_i}(T_{x_i}V_i)=h_i(x_i)l_i(x)/n_i$
  pour $i\in\{1,2\}$ et on applique le lemme~\ref{lemm.pente.produit}.
\end{proof}
\begin{rema}
  Comme not\'e dans un cadre plus g\'en\'eral dans la
  remarque \ref{rema.morph.liberte},
  si on fixe un point $x_1$ de $V_1(\KK)$, la liberte
  de $y\in p_1^{-1}(x_1)$ tend vers $0$ quand sa hauteur tend
  vers $+\infty$.
\end{rema}

On pose $n=n_1+n_2$.
Soient $\varepsilon$ et $\varepsilon'$ des applications de
l'ensemble~$\mathcal D$ introduit dans la
d\'efinition~\ref{defis.epsilon.libre}.
On note $\varepsilon'\geq \varepsilon$ si $\varepsilon'(B)\geq\varepsilon(B)$
pour $B\geq 1$.
Dans le th\'eor\`eme qui suit, on d\'esigne par
$\varepsilon$ une application
de l'ensemble $\mathcal D$ qui v\'erifie, en plus des
conditions (i) et (ii) de la d\'efinition~\ref{defis.epsilon.libre},
la condition suivante:
\begin{conditions}
  \setcounter{enumi}{2}
\item L'application
    $t\mapsto \log(t)^{1/2}\varepsilon(t)$ est croissante.
\end{conditions}

\begin{theo}
  \label{theo.produit}%
  On suppose que les vari\'et\'es~$V_1$ et~$V_2$ v\'erifient
  les conditions \textup{(i)} \`a \textup{(iv)}
  de la formule empirique et que, pour $i\in\{1,2\}$ et
  toute application~$\varepsilon'\in\mathcal D$ telle que
  $\varepsilon'\geq\frac {n_i^2}{2n^2}\varepsilon^2$,
  on ait l'estimation~\eqref{equ.laformule}:
  \[\card V_i(\KK)_{H_i\leq B}^{\elibre[']}=C(V_i)B\log(B)^{t_i-1}
  (1+o_{\varepsilon'}(B)),\]
  avec $t_i=\rg(\Pic(V_i))$.
  Alors on a l'expression
  \[\card (V_1\times V_2)(\KK)_{H\leq B}^{\elibre}=C(V_1\times V_2)
  B\log(B)^{t_1+t_2-1}(1+o_\varepsilon(B)).\]
\end{theo}
\begin{proof}
  Quitte \`a remplacer $\varepsilon$ par
  l'application donn\'ee par
  ${t\mapsto\min(\frac12,\varepsilon(t))}$, nous
  pouvons, sans perte de g\'en\'eralit\'e,
  supposer qu'on a l'in\'egalit\'e $\varepsilon(1)\leq\frac 12$.
  Soit $i\in\{1,2\}$.
  \'Etant donn\'e une application $\varepsilon'$ comme dans
  l'\'enonc\'e du th\'eor\`eme, l'application de
  $\leftclose 1,+\infty\rightopen$ dans $\NN$ d\'efinie par
  $B\mapsto V_i(\KK)_{H_i\leq B}^{\elibre[']}$ est croissante.
  La condition $l_i(x)\geq\varepsilon'(B)$ impose l'in\'egalit\'e $H_i(x)>1$,
  si bien qu'elle vaut~$0$ en~$1$. En outre, elle est \'egale
  \`a sa limite \`a droite en tout point, donc nulle sur un voisinage
  de $1$.
  On peut donc poser, pour $t\geq 1$,
  \[\eta_{i,\varepsilon'}(t)=\sup_{B> t}\left|
  \frac{V_i(\KK)_{H_i\leq B}^{\elibre[']}}{C(V_i)B\log(B)^{t_i-1}}-1\right|.\]
  Cela d\'efinit une application d\'ecroissante qui converge
  vers~$0$ lorsque~$B$ tend vers $+\infty$.
  \noqed
\end{proof}
\begin{rema}
  \label{rema.produit.multi}
  Si $S$ est un ensemble \emph{fini} d'applications $\varepsilon'$ comme ci-dessus.
  L'application d\'efinie par la relation
  $\eta_{i,S}(t)=\max_{\varepsilon'\in S}\eta_{i,\varepsilon}$
  jouit de propri\'et\'es analogues.
\end{rema}
 
  Soit $(x_1,x_2)$ un point rationnel du produit. Par la
  proposition~\ref{prop.liberte.produit}, la condition
  $l(x_1,x_2)\geq\varepsilon(B)$ implique tout d'abord
  que $h_1(x_1)>0$ et $h_2(x_2)>0$.
  Par suite, elle implique \'egalement les in\'egalit\'es
  \[l_1(x_1)\geq \frac{n_1}n\varepsilon(B)\qquad\text{et}\qquad
  l_2(x_2)\geq \frac{n_2}n\varepsilon(B).\]
  On note dans la suite $\varepsilon_1=\frac{n_1}n\varepsilon$
  et $\varepsilon_2=\frac{n_2}n\varepsilon$.
  Nous allons d\'ecouper la preuve en une s\'erie de lemmes.
\begin{lemm}
  \label{lemm.produit.epsilon}
  Soit $\lambda\in\leftopen 0,1\rightopen$
  et soit $\varepsilon'=\lambda\varepsilon$.
  Soit $B>1$, pour tout $P\in [B^{1/2},B]$, on a les in\'egalit\'es
  \[\varepsilon'(B)\leq\varepsilon'(P)\leq2\varepsilon'(B),\]
  et, pour tout $P\in[B^{\varepsilon'(B)},B]$, on a
  \[\varepsilon'(B)\leq\varepsilon'(P)\leq\sqrt{\varepsilon'(B)}.\]
\end{lemm}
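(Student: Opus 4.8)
The plan is to rely on exactly two properties of $\varepsilon'$. First, since $\varepsilon'=\lambda\varepsilon$ with $\lambda\in\leftopen 0,1\rightopen$ and $\varepsilon\in\mathcal D$, the function $\varepsilon'$ is continuous and decreasing. Second, by the additional condition~(iii) imposed on~$\varepsilon$ (in force throughout the proof of Theorem~\ref{theo.produit}), the auxiliary map $t\mapsto\log(t)^{1/2}\varepsilon'(t)=\lambda\,\log(t)^{1/2}\varepsilon(t)$ is increasing on $\leftclose 1,+\infty\rightopen$. Before using these I would record the elementary bookkeeping that makes everything legitimate: for $B>1$ one has $0<\varepsilon'(B)\leq\varepsilon(B)\leq 1$ and $\log(B)>0$, so that the two exponents appearing in the lemma satisfy $1<B^{1/2}\leq B$ and $1<B^{\varepsilon'(B)}\leq B$; in particular $\log(B^{1/2})=\tfrac12\log(B)>0$ and $\log(B^{\varepsilon'(B)})=\varepsilon'(B)\log(B)>0$, and the monotonicity coming from~(iii) can be applied between any two of these points.

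The left-hand inequalities $\varepsilon'(B)\leq\varepsilon'(P)$ in both displays follow at once: in each case $P\leq B$ and $\varepsilon'$ is decreasing. For the right-hand inequalities the argument is uniform: one first uses that $\varepsilon'$ is decreasing to replace $P$ by the smallest admissible value, then one estimates that value by means of the increasing function of~(iii). For $P\in[B^{1/2},B]$ this yields $\varepsilon'(P)\leq\varepsilon'(B^{1/2})$, and from $\log(B^{1/2})^{1/2}\varepsilon'(B^{1/2})\leq\log(B)^{1/2}\varepsilon'(B)$ one gets $\varepsilon'(B^{1/2})\leq(\log(B)/\log(B^{1/2}))^{1/2}\varepsilon'(B)=\sqrt2\,\varepsilon'(B)\leq 2\varepsilon'(B)$. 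For $P\in[B^{\varepsilon'(B)},B]$ one has $\varepsilon'(P)\leq\varepsilon'(B^{\varepsilon'(B)})$, and from $\log(B^{\varepsilon'(B)})^{1/2}\varepsilon'(B^{\varepsilon'(B)})\leq\log(B)^{1/2}\varepsilon'(B)$ one obtains $\varepsilon'(B^{\varepsilon'(B)})\leq(\log(B)/(\varepsilon'(B)\log(B)))^{1/2}\varepsilon'(B)=\varepsilon'(B)^{1/2}$, that is $\varepsilon'(P)\leq\sqrt{\varepsilon'(B)}$, as required.

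I do not expect any genuine obstacle here: the statement is a soft consequence of~(iii), and the only point needing a little care is the one dealt with in the first paragraph, namely verifying that $B^{1/2}$ and $B^{\varepsilon'(B)}$ lie in $\leftclose 1,+\infty\rightopen$ (indeed in $[1,B]$) so that~(iii) may legitimately be invoked and no logarithm vanishes. Once that is checked, each half of the lemma is a two-line manipulation of the inequality furnished by~(iii).
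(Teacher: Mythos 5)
Your proof is correct and follows essentially the same route as the paper: the left inequalities come from the monotonicity of $\varepsilon'$, and the right-hand bounds from the monotonicity of $t\mapsto\log(t)^{1/2}\varepsilon'(t)$ supplied by condition~(iii). The only cosmetic difference is that for the first bound the paper invokes the (derived) monotonicity of $t\mapsto\log(t)\,\varepsilon'(t)$ to obtain the factor $2$ exactly, whereas you apply (iii) directly and get $\sqrt{2}\leq 2$.
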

\begin{proof}
  Les in\'egalit\'es de gauche r\'esultent du fait que $\varepsilon$
  est suppos\'ee d\'ecroissante. Comme $t\mapsto\log(t)\varepsilon'(t)$
  est croissante, on a
  \[\log(B^{1/2})\varepsilon'(B^{1/2})\leq
  \log(B)\varepsilon'(B)\] ce qui donne la premi\`ere majoration.
  De m\^eme, l'in\'egalit\'e
  \[\sqrt{\log(B^{\varepsilon'(B)})}\varepsilon'(B^{\varepsilon'(B)})\leq
  \sqrt{\log(B)}\varepsilon'(B)\]
  permet de prouver la seconde.
\end{proof}
\begin{lemm}
  \label{lemm.maj.carre}
  Le cardinal de l'ensemble des $(x_1,x_2)\in V_1\times V_2(\KK)$
  v\'erifiant les conditions
  \[H_1(x_1)\leq B^{1/2},\quad H_2(x_2)\leq B^{1/2},
  \quad l_1(x_1)\geq \varepsilon_1(B)\quad
  \text{et}\quad l_2(x_2)\geq\varepsilon_2(B)\]
  est, \`a une constante pr\`es, major\'e par $B\log(B)^{t_1+t_2-2}$.
\end{lemm}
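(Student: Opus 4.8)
Le plan est de factoriser l'ensemble \`a d\'enombrer sous la forme d'un produit. Posons, pour $i\in\{1,2\}$,
\[S_i=\{\,x_i\in V_i(\KK)\mid H_i(x_i)\leq B^{1/2}\text{ et }l_i(x_i)\geq\varepsilon_i(B)\,\}.\]
L'ensemble consid\'er\'e dans l'\'enonc\'e est alors exactement le produit $S_1\times S_2$, de sorte qu'il suffira d'\'etablir la majoration $\card S_i=O(B^{1/2}\log(B)^{t_i-1})$ pour $i\in\{1,2\}$, puis de multiplier les deux estimations pour obtenir $\card(S_1\times S_2)=\card S_1\cdot\card S_2=O(B\log(B)^{t_1+t_2-2})$.

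Pour majorer $\card S_i$, j'introduirai l'application $\varepsilon'_i$ d\'efinie par $\varepsilon'_i(t)=\frac{n_i}n\,\varepsilon(t^2)$, qui vaut donc $\varepsilon_i(t^2)$. Comme $\log(t^2)=2\log(t)$, les conditions \textup{(i)} et \textup{(ii)} de la d\'efinition~\ref{defis.epsilon.libre} satisfaites par~$\varepsilon$ se transmettent \`a~$\varepsilon'_i$, et la continuit\'e, la d\'ecroissance ainsi que le fait d'\^etre \`a valeurs dans $\leftopen 0,1\rightopen$ sont imm\'ediats; on a donc $\varepsilon'_i\in\mathcal D$. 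En outre $\varepsilon'_i(B^{1/2})=\frac{n_i}n\varepsilon(B)=\varepsilon_i(B)$, d'o\`u l'\'egalit\'e exacte
\[S_i=V_i(\KK)^{l\geq\varepsilon'_i}_{H_i\leq B^{1/2}}.\]

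L'\'etape la plus d\'elicate, et la seule qui fasse intervenir la condition \textup{(iii)} impos\'ee \`a~$\varepsilon$ dans l'\'enonc\'e du th\'eor\`eme, sera la v\'erification de l'in\'egalit\'e $\varepsilon'_i\geq\frac{n_i^2}{2n^2}\varepsilon^2$, qui est requise pour que~$\varepsilon'_i$ soit une application admissible dans l'hypoth\`ese faite sur~$V_i$. La croissance de $t\mapsto\log(t)^{1/2}\varepsilon(t)$ donne $\log(t)^{1/2}\varepsilon(t)\leq\log(t^2)^{1/2}\varepsilon(t^2)=\sqrt2\,\log(t)^{1/2}\varepsilon(t^2)$, d'o\`u $\varepsilon(t^2)\geq\frac1{\sqrt2}\varepsilon(t)\geq\frac1{\sqrt2}\varepsilon(t)^2$ puisque $\varepsilon\leq 1$; en multipliant par $\frac{n_i}n$ et en utilisant $n_i\leq n$, on obtient $\varepsilon'_i(t)\geq\frac{n_i}{\sqrt2\,n}\varepsilon(t)^2\geq\frac{n_i^2}{2n^2}\varepsilon(t)^2$. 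Il restera \`a appliquer l'estimation~\eqref{equ.laformule} suppos\'ee sur~$V_i$ \`a l'application admissible~$\varepsilon'_i$, \'evalu\'ee en $B^{1/2}$, ce qui fournit
\[\card S_i=C(V_i)\,B^{1/2}\log(B^{1/2})^{t_i-1}\bigl(1+o_{\varepsilon'_i}(B^{1/2})\bigr)=O\bigl(B^{1/2}\log(B)^{t_i-1}\bigr),\]
puis \`a combiner ces majorations pour $i=1$ et $i=2$. Aucune difficult\'e s\'erieuse n'est \`a pr\'evoir au-del\`a du contr\^ole \'el\'ementaire pr\'ec\'edent; on pourrait d'ailleurs, de mani\`ere \'equivalente, invoquer le lemme~\ref{lemm.produit.epsilon} (avec $P=B^{1/2}$) pour comparer $\varepsilon_i(B^{1/2})$ et $\varepsilon_i(B)$ plut\^ot que la substitution $t\mapsto t^2$.
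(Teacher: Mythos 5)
Votre preuve est correcte et suit essentiellement la m\^eme r\'eduction que celle de l'article: factoriser l'ensemble en un produit et appliquer \`a chaque facteur l'hypoth\`ese du th\'eor\`eme avec la borne de hauteur $B^{1/2}$ et une fonction admissible modifi\'ee. La seule diff\'erence est cosm\'etique: l'article invoque le lemme~\ref{lemm.produit.epsilon} pour se ramener \`a $\frac12\varepsilon_i(B^{1/2})$ et applique l'hypoth\`ese \`a $\frac12\varepsilon_i$ (dont l'appartenance \`a $\mathcal D$ et la minoration par $\frac{n_i^2}{2n^2}\varepsilon^2$ sont imm\'ediates), tandis que vous substituez $\varepsilon'_i(t)=\varepsilon_i(t^2)$ et v\'erifiez ces deux points directement \`a l'aide de la condition \textup{(iii)} --- variante que vous signalez d'ailleurs vous-m\^eme comme \'equivalente.
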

\begin{proof}
  Le lemme pr\'ec\'edent permet de majorer le cardinal consid\'er\'e
  par celui de l'ensemble des $(x_1,x_2)\in V_1\times V_2(\KK)$
  tels que
  \[H_1(x_1)\leq B^{1/2},\  H_2(x_2)\leq B^{1/2},\  l_1(x_1)\geq
  \frac12\varepsilon_1(B^{1/2})\ 
  \text{et}\  l_2(x_2)\geq\frac12
  \varepsilon_2(B^{1/2})\]
  et on applique les hypoth\`eses du th\'eor\`eme \`a $\frac 12\varepsilon_1$
  et $\frac 12\varepsilon_2$.
\end{proof}
Par sym\'etrie, compte tenu du dernier lemme, il  nous suffit
d'estimer le cardinal de l'ensemble des $(x_1,x_2)\in V_1\times V_2(\KK)$
tels que
\[H_1(x_1)\geq B^{1/2},\quad H_1(x_1)H_2(x_2)\leq B,\quad\text{et}\quad
l(x_1,x_2)\geq\varepsilon(B).\]
La condition sur la libert\'e est la conjonction des deux
conditions suivantes:
\begin{conditions}
\item $\displaystyle l_1(x_1)
  \geq \frac{h_1(x_1)+h_2(x_2)}{h_1(x_1)}\varepsilon_1(B)$;
\item $\displaystyle l_2(x_2)
  \geq \frac{h_1(x_1)+h_2(x_2)}{h_2(x_2)}\varepsilon_2(B)$.
\end{conditions}
Sous les hypoth\`eses $H_1(x_1)\geq B^{1/2}$ et $H_1(x_1)H_2(x_2)\leq B$,
on a
\[\frac{h_1(x_1)+h_2(x_2)}{h_1(x_1)}\in [1,2]
\quad\text{et}\quad\frac{h_1(x_1)+h_2(x_2)}{h_2(x_2)}
\in\left[\frac{\log(B)}{2h_2(x_2)}+1,\frac{\log(B)}{h_2(x_2)}\right].\]
On introduit les quatres conditions suivantes
\begin{align*}
  \text{\textup{(i${}^+_B$)}}&\ l_1(x_1)\geq \varepsilon_1(B),\qquad&
  \text{\textup{(i${}^-_B$)}}&\ l_1(x_1)\geq 2\varepsilon_1(B),\\
  \text{\textup{(ii${}^+_B$)}}&\ l_2(x_2)\geq
  \left(\frac{\log(B)}{2h_2(x_2)}+1\right)
  \varepsilon_2(B),\qquad&
  \text{\textup{(ii${}^-_B$)}}&\ l_2(x_2)\geq
  \frac{\log(B)}{h_2(x_2)}\varepsilon_2(B).
\end{align*}
Comme $l_2(x_2)\leq 1$, la condition (ii${}^+_B$) implique
que $H_2(x_2)\geq B^{\frac{\epsilon_2(B)}2}$.
Consid\'erons maintenant les conditions
\begin{conditions}
\item[(iii${}^+_B$)]$l_2(x_2)\geq\epsilon_2(B)$ et $H_2(x_2)
  \geq B^{\frac{\varepsilon_2(B)}2}$;
\item[(iii${}^-_B$)]$l_2(x_2)\geq\sqrt{\epsilon_2(B)}$ et $H_2(x_2)\geq B^{\sqrt{\varepsilon_2(B)}}$.
\end{conditions}
Alors, sous les hypoth\`eses pr\'ec\'edentes, on a les implications
\[\text{\textup{(i${}^-_B$)}}\Longrightarrow\text{\textup{(i)}}
\Longrightarrow\text{\textup{(i${}^+_B$)}}\]
et
\[\text{\textup{(iii${}^-_B$)}}\Longrightarrow
\text{\textup{(ii${}^-_B$)}}\Longrightarrow\text{\textup{(ii)}}
\Longrightarrow\text{\textup{(ii${}^+_B$)}}\Longrightarrow
\text{\textup{(iii${}^+_B$)}}.\]
Pour $P\in [B^{\frac12},B]$, on note $N_1^+(P,B)$ (\resp\ $N_1^-(P,B)$)
le cardinal des $x_1\in V_1(\KK)$ tels que $H_1(x_1)\leq P$
et qui v\'erifient la condition (i${}^+_B$) (\resp\ (i${}^-_B$)).
On note $\mathcal E^+_2(B)$ (\resp\ $\mathcal E^-_2(B)$) l'ensemble
des $x_2\in V_2(\KK)$ tels que $H_2(x_2)\leq B^{\frac12}$
et qui v\'erifient la condition (iii${}^+_B$) (\resp\ (iii${}^-_B$)).
Le nombre qui nous int\'eresse est minor\'e (\resp. major\'e)
par
\[\sum_{x_2\in\mathcal E_2^-(B)}\left(N_1^-\left(\frac B{H_2(x_2)},B\right)
-N_1^-(B^{\frac12},B)\right),\]
(\resp\ par
\[\left.\sum_{x_2\in\mathcal E_2^+(B)}\left(N_1^+\left(\frac B{H_2(x_2)},B\right)
-N_1^+(B^{\frac12},B)\right)\right).\]
En appliquant une nouvelle fois le lemme~\ref{lemm.maj.carre},
nous constatons que la contribution de
$\sum_{x\in\mathcal E_2^+}N_1^+(B^{\frac12},B)$ est n\'egligeable
devant $B\log(B)^{t_1+t_2-1}$.
Il nous reste \`a estimer la somme pour le terme principal.
\begin{lemm}
  Il existe une application $\eta'_1:\leftclose 1,+\infty\rightopen\to\RR$
  d\'ecroissante et tendant vers~$0$ en $+\infty$ telle
  que
  \[\left|\frac{N_1(P,B)}{C_1(V)P\log(P)^{t_1-1}}-1\right|\leq \eta'_1(B)\]
  pour $N_1\in\{N_1^+,N_1^-\}$, $B> 1$ et $P\in [B^{\frac 12},B]$.
\end{lemm}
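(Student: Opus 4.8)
The plan is to recognise $N_1^\pm(P,B)$ as counting functions on $V_1$ carrying a slightly shifted freedom threshold and then to feed these into the hypotheses of Theorem~\ref{theo.produit}. By construction, for $P\in[B^{1/2},B]$,
\[N_1^+(P,B)=\card\{\,x\in V_1(\KK)\mid H_1(x)\le P,\ l_1(x)\ge\varepsilon_1(B)\,\},\]
and similarly for $N_1^-$ with threshold $2\varepsilon_1(B)$, where $\varepsilon_1=\frac{n_1}n\varepsilon$. The first (and essentially only non-bookkeeping) step is to trade the threshold $\varepsilon_1(B)$, which refers to $B$, for $\varepsilon_1(P)$, which refers to $P$: since $\varepsilon_1=\lambda\varepsilon$ with $\lambda=n_1/n\in\leftopen 0,1\rightopen$ and $P$ ranges only over $[B^{1/2},B]$, Lemma~\ref{lemm.produit.epsilon} gives $\varepsilon_1(B)\le\varepsilon_1(P)\le2\varepsilon_1(B)$, which is exactly what is needed.

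Writing $A(P;\theta)$ for the number of $x\in V_1(\KK)$ with $H_1(x)\le P$ and $l_1(x)\ge\theta$, I would then read off the two sandwiches
\[A\bigl(P;\varepsilon_1(P)\bigr)\le N_1^+(P,B)\le A\bigl(P;\tfrac12\varepsilon_1(P)\bigr)\quad\text{and}\quad A\bigl(P;2\varepsilon_1(P)\bigr)\le N_1^-(P,B)\le A\bigl(P;\varepsilon_1(P)\bigr),\]
using only $P\le B$, the monotonicity of $\varepsilon_1$, and the estimate just quoted: e.g. $l_1(x)\ge\varepsilon_1(P)$ forces $l_1(x)\ge\varepsilon_1(B)$, and $l_1(x)\ge\varepsilon_1(B)$ forces $l_1(x)\ge\frac12\varepsilon_1(P)$, and the two implications governing $N_1^-$ are the same with $2\varepsilon_1$ in place of $\varepsilon_1$.

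It then remains to estimate the three counting functions $A(P;\mu\varepsilon_1(P))$ for $\mu\in\{\frac12,1,2\}$, which I would do by invoking the hypothesis of Theorem~\ref{theo.produit}. For this I must check that each of $\frac12\varepsilon_1,\varepsilon_1,2\varepsilon_1$ belongs to $\mathcal D$ and dominates $\frac{n_1^2}{2n^2}\varepsilon^2$; after the reduction $\varepsilon(1)\le\frac12$ performed at the beginning of the proof of the theorem, each is a multiple $\mu'\varepsilon$ of $\varepsilon$ with $0<\mu'\le n_1/n<1$, hence continuous, decreasing, valued in $\leftopen 0,1\rightopen$, with $\log(t)^{\alpha}\mu'\varepsilon(t)\to+\infty$ for $\alpha\in\leftopen 0,1]$, and $\mu'\varepsilon\ge\frac{n_1^2}{2n^2}\varepsilon^2$ because $\frac{n_1}n\varepsilon\le1$. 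The hypothesis, phrased through the decreasing error functions $\eta_{1,\varepsilon'}$ built in its proof (with $C_1(V)=C(V_1)$), then yields for $\varepsilon'\in\{\frac12\varepsilon_1,\varepsilon_1,2\varepsilon_1\}$ and $P\in[B^{1/2},B]$
\[\left|\frac{A(P;\varepsilon'(P))}{C_1(V)\,P\log(P)^{t_1-1}}-1\right|\le\eta_{1,\varepsilon'}(P)\le\eta_{1,\varepsilon'}(B^{1/2}),\]
the last step because $\eta_{1,\varepsilon'}$ is decreasing and $P\ge B^{1/2}$. Dividing the two sandwiches by $C_1(V)P\log(P)^{t_1-1}$ and combining, the lemma holds with
\[\eta'_1(B)=\max\bigl(\eta_{1,\frac12\varepsilon_1}(B^{1/2}),\ \eta_{1,\varepsilon_1}(B^{1/2}),\ \eta_{1,2\varepsilon_1}(B^{1/2})\bigr),\]
which by Remark~\ref{rema.produit.multi} (or directly from the three $\eta_{1,\cdot}$) is decreasing and tends to $0$ at $+\infty$. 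I do not expect any genuine obstacle here: the only points requiring care are the directions of the threshold comparisons and the membership/domination checks for the shifted functions, all the analytic content being already supplied by Lemma~\ref{lemm.produit.epsilon} and the inductive hypothesis of the theorem.
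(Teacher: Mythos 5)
Your proof is correct and follows essentially the same route as the paper: Lemma~\ref{lemm.produit.epsilon} to pass between the thresholds at $B$ and at $P$, then the uniform error functions of Remark~\ref{rema.produit.multi} applied to the finite set $\{\tfrac12\varepsilon_1,\varepsilon_1,2\varepsilon_1\}$, and finally $\eta'_1(B)=\eta_1(B^{1/2})$ using $P\geq B^{1/2}$ and monotonicity. The only slip is cosmetic: for $2\varepsilon_1$ the multiplier is $2n_1/n$, which may exceed $1$; what saves the membership in $\mathcal D$ is that its \emph{values} stay below $n_1/n<1$ thanks to the reduction $\varepsilon\leq\tfrac12$, exactly as you otherwise argue.
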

\begin{proof}
  Comme $P\in [B^{\frac12},B]$, le lemme~\ref{lemm.produit.epsilon}
  donne les in\'egalit\'es
  $\frac 12\varepsilon_1(P)\leq\varepsilon_1(B)\leq\varepsilon_1(P)$.
  En appliquant la remarque~\ref{rema.produit.multi} \`a
  l'ensemble $S=\{\frac 12\varepsilon_1,\varepsilon_1,2\varepsilon_1\}$
  on obtient une application d\'ecroissante $\eta_1$ de sorte
  que les termes d'erreur du lemme soient major\'es par $\eta_1(P)$.
  L'application $\eta'_1:B\mapsto\eta_1(B^{1/2})$ satisfait alors
  la conclusion du lemme.
\end{proof}
Ce lemme nous permet donc de nous ramener \`a estimer
\[\sum_{x_2\in\mathcal E_2(B)}\frac{1}{H_2(x_2)}\log
\left(\frac B{H_2(x_2)}\right)^{t_!-1}\]
pour $\mathcal E_2\in\{\mathcal E_2^+,\mathcal E_2^-\}$.
On consid\`ere l'application $f_B:[1,B^{\frac12}]\to\RR$
d\'efinie par ${P\mapsto \frac 1P(\log(B)-\log(P))^{t_1-1}}$.
On note $g_B^+$ (\resp\ $g_B^-$) l'application qui \`a $P\in[1,B^{\frac 12}]$
associe le cardinal des $x\in V_2(\KK)$ tels que $H_2(x)\leq P$
et $l_2(x)\geq \varepsilon_2(B)$ (\resp\ $l_2(x)\geq\sqrt{\varepsilon_2(B)}$).
En utilisant les notations
des int\'egrales de Stieltjes (\cf \cite[\S I.0.1]{tenenbaum:analytique}),
les sommes ci-dessus se mettent donc sous la forme
\begin{equation}
  \label{equ.parpartie}
  \int_{B^{\epsilon'(B)}}^{B^{\frac12}}f_B(u)\haar{g_B(u)}=
      [f_B(u)g_B(u)]_{B^{\epsilon'(B)}}^{B^{\frac12}}-
      \int_{B^{\epsilon'(B)}}^{B^{\frac12}}f'_B(u)
      g_B(u)\haar u,
\end{equation}
o\`u $\varepsilon'\in\{\epsilon_2,\sqrt{\epsilon_2}\}$ et
$g_B\in\{g^+_B,g^-_B\}$; l'\'egalit\'e vient de la formule
d'Abel qui s'\'ecrit ici comme une int\'egration par partie.
\begin{lemm}
  \label{lemm.produit.deuxieme}
  Il existe une application $\eta'_2:\leftclose 1,+\infty\rightopen\to\RR$
  d\'ecroissante et tendant vers~$0$ en $+\infty$ telle
  que
  \[\left|\frac{g_B(P)}{C_2(V)P\log(P)^{t_2-1}}-1\right|\leq \eta'_2(B)\]
  pour $B> 1$, $P\in [B^{\varepsilon_2(B)},B^{\frac 12}]$ et
  $g_B\in\{g_B^+,g_B^-\}$.
\end{lemm}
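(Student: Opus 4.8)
The plan is to enclose each of $g_B^+(P)$ and $g_B^-(P)$, on the range $P\in[B^{\varepsilon_2(B)},B^{1/2}]$, between two counts of ``$\psi$-free'' points of~$V_2$ for functions $\psi\in\mathcal D$ to which the hypothesis of Theorem~\ref{theo.produit} applies, and then to transfer the asymptotic uniformly in~$B$. For $\psi\in\mathcal D$ and $Q\ge 1$ write $\card V_2(\KK)_{H_2\le Q}^{l\ge\psi}:=\card\{x\in V_2(\KK)\mid H_2(x)\le Q,\ l_2(x)\ge\psi(Q)\}$, which is $\card V_2(\KK)_{H_2\le Q}^{\elibre[']}$ for $\varepsilon'=\psi$. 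Since $\varepsilon_2$ is decreasing and $P\le B^{1/2}<B$ one has $\varepsilon_2(P)\ge\varepsilon_2(B)$, whence $\card V_2(\KK)_{H_2\le P}^{l\ge\varepsilon_2}\le g_B^+(P)$; and Lemma~\ref{lemm.produit.epsilon} applied with $\lambda=n_2/n$ gives $\varepsilon_2(P)\le\sqrt{\varepsilon_2(B)}$ for $P\in[B^{\varepsilon_2(B)},B]$, so $\varepsilon_2(P)^2\le\varepsilon_2(B)$ and hence $g_B^+(P)\le\card V_2(\KK)_{H_2\le P}^{l\ge\varepsilon_2^2}$. Using in addition $\sqrt{\varepsilon_2(B)}\ge\varepsilon_2(B)$ and $\sqrt{\varepsilon_2(P)}\ge\sqrt{\varepsilon_2(B)}$ one obtains the enclosures
\[\card V_2(\KK)_{H_2\le P}^{l\ge\varepsilon_2}\le g_B^+(P)\le\card V_2(\KK)_{H_2\le P}^{l\ge\varepsilon_2^2}\]
and
\[\card V_2(\KK)_{H_2\le P}^{l\ge\sqrt{\varepsilon_2}}\le g_B^-(P)\le g_B^+(P)\le\card V_2(\KK)_{H_2\le P}^{l\ge\varepsilon_2^2}.\]

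Next I would verify that the three auxiliary functions $\varepsilon_2$, $\sqrt{\varepsilon_2}$ and $\varepsilon_2^2$ all belong to $\mathcal D$ and dominate $\frac{n_2^2}{2n^2}\varepsilon^2$, so that the hypothesis of Theorem~\ref{theo.produit} applies to each of them: continuity, monotonicity and property~(i) of Definition~\ref{defis.epsilon.libre} are immediate, property~(ii) is straightforward (for instance $\log(t)^\alpha\varepsilon_2(t)^2=(\log(t)^{\alpha/2}\varepsilon_2(t))^2$ with $\alpha/2\in(0,1]$, and a similar trick for $\sqrt{\varepsilon_2}$), and the three dominations are elementary using $n_2<n$ and $\varepsilon\le1$. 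By Remark~\ref{rema.produit.multi} applied to the finite set $S=\{\sqrt{\varepsilon_2},\varepsilon_2,\varepsilon_2^2\}$, the function $\eta_{2,S}=\max_{\psi\in S}\eta_{2,\psi}$ is decreasing, tends to~$0$ at $+\infty$, and satisfies $\bigl|\card V_2(\KK)_{H_2\le Q}^{l\ge\psi}/(C_2(V)Q\log(Q)^{t_2-1})-1\bigr|\le\eta_{2,S}(Q)$ for all $\psi\in S$ and $Q>1$. Feeding the four bounding quantities above into this estimate gives
\[\Bigl|\frac{g_B(P)}{C_2(V)\,P\log(P)^{t_2-1}}-1\Bigr|\le\eta_{2,S}(P)\qquad\text{for }g_B\in\{g_B^+,g_B^-\}\text{ and }P\in[B^{\varepsilon_2(B)},B^{1/2}].\]

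Finally I would make the right-hand side a decreasing function of~$B$ tending to~$0$: since $P\ge B^{\varepsilon_2(B)}$ and $\eta_{2,S}$ is decreasing, $\eta_{2,S}(P)\le\eta_{2,S}(B^{\varepsilon_2(B)})$; and although $B\mapsto B^{\varepsilon_2(B)}$ need not itself be monotone, property~(ii) with $\alpha=1/2$ yields $\varepsilon_2(B)\log B\ge\sqrt{\log B}$ for $B$ large, hence $B^{\varepsilon_2(B)}\ge e^{\sqrt{\log B}}$ eventually, so that $\eta'_2(B):=\sup_{B'\ge B}\eta_{2,S}\bigl((B')^{\varepsilon_2(B')}\bigr)$ is decreasing by construction and is eventually bounded above by $\eta_{2,S}(e^{\sqrt{\log B}})$, hence tends to~$0$; this $\eta'_2$ satisfies the conclusion. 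The main obstacle I anticipate is precisely this last conversion of a $P$-uniform error into a $B$-uniform one --- which is where one must quantify the rate at which $B^{\varepsilon_2(B)}\to+\infty$ --- the threshold manipulations themselves being routine consequences of Lemma~\ref{lemm.produit.epsilon} and of the defining properties of~$\mathcal D$.
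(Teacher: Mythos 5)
Your proposal is correct and follows essentially the same route as the paper: the same sandwich of $g_B^+(P)$ and $g_B^-(P)$ between counts attached to the thresholds $\varepsilon_2^2,\varepsilon_2,\sqrt{\varepsilon_2}$ obtained from Lemma~\ref{lemm.produit.epsilon}, the same appeal to Remark~\ref{rema.produit.multi} with $S=\{\varepsilon_2^2,\varepsilon_2,\sqrt{\varepsilon_2}\}$, and the same conversion of the error at~$P$ into an error at~$B$ via $P\geq B^{\varepsilon_2(B)}$. The only (harmless) deviation is at the very end: the paper simply takes $\eta'_2(B)=\eta_2(B^{\varepsilon_2(B)})$, which is already decreasing because condition~(iii) makes $B\mapsto\log(B)\varepsilon_2(B)$ increasing, whereas you enforce monotonicity by a supremum and get the divergence of $B^{\varepsilon_2(B)}$ from condition~(ii) with $\alpha=1/2$; both arguments are valid.
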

\begin{proof}
  Comme $P\in [B^{\varepsilon_2(B)},B^{\frac 12}]$,
  le lemme~\ref{lemm.produit.epsilon}
  donne les in\'egalit\'es
  $\varepsilon_2(P)^2\leq\varepsilon_2(B)\leq\varepsilon_2(P)$.
  En appliquant la remarque~\ref{rema.produit.multi} \`a
  l'ensemble $S=\{\varepsilon_2^2,\varepsilon_2,\sqrt\varepsilon_2\}$,
  on obtient une application d\'ecroissante $\eta_2$ de sorte
  que les termes d'erreur du lemme soient major\'es par $\eta_2(P)$.
  L'application $\eta'_2:B\mapsto\eta_2(B^{\epsilon_2(B)})$ satisfait alors
  la conclusion du lemme puisque l'application
  $B\mapsto\log(B)\varepsilon_2(B)$ est croissante
  et tend vers $+\infty$ en $+\infty$.
\end{proof}
\begin{proof}[Fin de la preuve du th\'eor\`eme~\ref{theo.produit}]
  La d\'eriv\'ee de l'application $f_B$ est donn\'e par
  $f_B'(t)=-1/t^2$ si $t_1=1$ et par
  \[f_B'(t)=\frac{-1}{t^2}((\log(B)-\log(t))^{t_1-1}
  +(t_1-1)(\log(B)-\log(t))^{t_1-2})\]
  si $t_1\geq 2$.
  Il r\'esulte du lemme~\ref{lemm.produit.deuxieme} que
  \[
  \begin{split}
    [f_B(u)g_B(u)]_{B^{\varepsilon'(B)}}^{B^{\frac 12}}&\leq f_B(B^{\frac12})
    g_B(B^{\frac12})\\
    &\leq C(V_2)\log(B^{\frac12})^{t_1-1}\log(B^{\frac12})^{t_2-1}
    (1+\eta'_2(B))\\
    &\leq C(V_2)\log(B)^{t_1+t_2-2}
    (1+\eta'_2(B)).
  \end{split}
  \]
  Ce terme est donc n\'egligeable devant $\log(B)^{t_1+t_2-1}$.
  D'autre part, pour $0<\lambda<\mu<1$ et
  pour un entier $m\geq 1$, on a les \'egalit\'es
  \[\begin{split}
  &\int_{B^\lambda}^{B^\mu}\frac 1{u^2}(\log(B)-\log(u))^{m-1}
  u\log(u)^{t_2-1}\haar u\\
  &\quad=\log(B)^{t_2+m-1}\int_{B^\lambda}^{B^\mu}
  \left(1-\frac{\log(u)}{\log(B)}\right)^{m-1}
  \left(\frac{\log(u)}{\log(B)}\right)^{t_2-1}
  \haar{\left(\frac{\log(u)}{\log(B)}\right)}\\
  &\quad=\log(B)^{t_2+m-1}\int_\lambda^\mu (1-u)^{m-1}u^{t_2-1}\haar u.
  \end{split}\]
  Comme les applications $\varepsilon_2$ et $\sqrt{\varepsilon_2}$
  convergent vers~$0$ en~$+\infty$, le terme de droite
  de~\eqref{equ.parpartie} est \'equivalent \`a
  \[C(V_2)\log(B)^{t_1+t_2-1}\int_0^{\frac12}(1-u)^{t_1-1}u^{t_2-1}\haar u.\]
  Le cardinal de l'ensemble des $(x_1,x_2)\in V_1\times V_2(\KK)$
  tels que
  \[H_1(x_1)\geq B^{1/2},\quad H_1(x_1)H_2(x_2)\leq B,\quad\text{et}\quad
  l(x_1,x_2)\geq\varepsilon(B)\]
  est donc \'equivalent \`a
  \[C(V_1)C(V_2)B\log(B)^{t_1+t_2-1}\int_0^{\frac12}(1-u)^{t_1-1}u^{t_2-1}\haar u.\]
  Par sym\'etrie, le cardinal de l'ensemble des
  $(x_1,x_2)\in V_1\times V_2(\KK)$
  tels que
  \[H_2(x_2)\geq B^{1/2},\quad H_1(x_1)H_2(x_2)\leq B,\quad\text{et}\quad
  l(x_1,x_2)\geq\varepsilon(B)\]
  est \'equivalent \`a
  \[C(V_1)C(V_2)B\log(B)^{t_1+t_2-1}\int_{\frac12}^1(1-u)^{t_1-1}u^{t_2-1}\haar u.\]
  Mais compte tenu des propri\'et\'es de la fonction b\^eta,
  on a l'\'egalit\'e
  \[\int_0^1(1-u)^{t_1-1}u^{t_2-1}\haar u=\frac{(t_1-1)!(t_2-1)!}{(t_1+t_2-1)!}.\]
  Il nous reste pour conclure \`a rappeler la formule suivante
  (cf. \cite[proposition~4.1]{peyre:fano}):
  \[C(V_1\times V_2)=\frac{(t_1-1)!(t_2-1)!}{(t_1+t_2-1)!}C(V_1)C(V_2).\qed\]
  \noqed
\end{proof}
\begin{listrems}
  \remarque
  Le terme d'erreur donn\'e par cette d\'emonstration est explicite
  mais assez pitoyable, en particulier
  si on le compare avec celui obtenu
  dans le cas particulier d'un produit de droites projectives.
  Nous manquons actuellement d'exemples pour savoir quel r\'esultat
  optimal pourrait \^etre attendu ici.

  \remarque
  Notons que l'ensemble d\'efini par
  \[V(K)_{\mu_{\max}\leq B}=\{\,x\in V(\KK)\mid \mu_{\max}(x)\leq \log(B)\,\}.\]
  aurait l'avantage de se comporter
  mieux avec le produit de vari\'et\'es puisque
  l'ensemble ${V_1\times V_2(\KK)_{\mu_{\max}\leq \log(B)}}$
  est tout simplement le produit des ensembles
  $V_i(\KK)_{\mu_{\max}\leq \log(B)}$.
\end{listrems}

\subsubsection{\'Equidistribution}
L'\'equidistribution au sens de la distribution
empirique~\ref{distribution.empirique}, est \'egalement stable par produit.
On conserve les notations pr\'ec\'edant le th\'eor\`eme~\ref{theo.produit}.
\begin{theo}
  \label{theo.produit.equidist}%
  On suppose que les vari\'et\'es~$V_1$ et~$V_2$ v\'erifient
  les conditions \textup{(i)} \`a \textup{(iv)}
  de la formule empirique et que, pour $i\in\{1,2\}$ et
  toute application~$\varepsilon'\in\mathcal D$ telle que
  $\varepsilon'\geq\frac {n_i^2}{2n^2}\varepsilon^2$
  la vari\'et\'e $V_i$ v\'erifie la formule empirique~\eqref{equ.laformule}
  et la distribution~\eqref{equ.distribution}.
  Alors $V_1\times V_2$ v\'erifie \'egalement~\eqref{equ.distribution}.
\end{theo}
\begin{proof}
  Compte tenu de \cite[\S3]{peyre:fano} et du th\'eor\`eme
  pr\'ec\'edent, il suffit de v\'erifier que,
  pour tout bon ouvert $W\subset V_1\times V_2(\Adeles_\KK)$,
  c'est-\`a-dire tout ouvert dont le bord $\partial W$
  est de mesure nulle pour la mesure ad\'elique,
  le cardinal de l'ensemble $(V_1\times V_2(\KK)\cap W)_{H\leq B}^{\elibre}$
  est \'equivalent \`a
  \[\alpha(V_1\times V_2)\beta(V_1\times V_2)
  \oomega_{V_1\times V_2}(W\cap V(\Adeles_\KK)^{\Br})B\log(B)^{t_!+t_2-1}.\]
  En outre, il suffit de le d\'emontrer pour les ouverts qui sont
  de la forme $W_1\times W_2$
  o\`u~$W_1$ (\resp~$W_2$) est un bon ouvert de~$V_1$ (\resp~$V_2$).
  La d\'emonstration s'obtient alors en rempla\c cant
  simplement $V_i(\KK)$
  par $V_i(\KK)\cap W_i$ pour $i\in\{1,2\}$ dans la d\'emonstration du
  th\'eor\`eme~\ref{theo.produit}.
\end{proof}
%% End of file 07-02_produit.tex
%% Including file 07-03_cercle.tex
\subsection{Compatibilit\'e avec la m\'ethode du cercle}

Il est connu que les r\'esultats de la m\'ethode du cercle sont compatibles
avec la version initiale du principe de Batyrev et Manin
(\cite[\S1.4]{fmt:fano} et \cite[corollaire~5.4.9]{peyre:fano})
en prenant comme ouvert de Zariski la vari\'et\'e elle-m\^eme.
La difficult\'e ici, comme dans le cas de l'espace projectif, est
donc de d\'emontrer que les points de petite libert\'e et de hauteur
born\'ee donnent une contribution n\'egligeable.

Soit~$V$ une intersection compl\`ete
lisse de~$m$ hypersurfaces de degr\'es respectifs $d_1,\dots,d_m$
dans l'espace projectif $\PP^N_\KK$ avec $d_i\geq 2$ pour~$i$ appartenant
\`a $\{1,\dots,m\}$.
On pose $|\boldsymbol d|=\sum_{i=1}^md_i$.
On suppose que la dimension~$n=N-m$
de~$V$ v\'erifie $n\geq 3$.
Rappelons que, dans ce cas, le torseur universel de~$V$
s'identifie au c\^one \'epoint\'e~$W\subset\Aff^{N+1}_\KK\setminus\{0\}$
au-dessus de~$V$. On note~$\pi$ la projection de~$W$ vers~$V$.
On munit l'espace projectif de la m\^eme m\'etrique ad\'elique
qu'au paragraphe~\ref{subsection.projectif}. Le fibr\'e tangent
de~$V$ est alors un sous-fibr\'e de l'image inverse de $T\PP^N_\KK$
sur~$V$ et on le munit de la m\'etrique ad\'elique induite.
\begin{prop}
  \label{prop.cercle.liberte}
  Soit $x\in V(\KK)$ et soit $y\in\pi^{-1}(x)$. Alors la libert\'e
  $l(x)$ est donn\'ee par l'expression
  \begin{equation*}
    \frac n{h(x)}\max\left(0,
    \min_F\left(\frac{\dega(T_yW)-\dega(F)}{n+1-\dim(F)}\right)
    -\dega(D)\right)
  \end{equation*}
  qui, \`a un terme $O(\frac 1{h(x)})$ pr\`es, peut s'\'ecrire
  \begin{equation*}
    \frac{n}{N+1-|\boldsymbol d|}\max\left(0,1+\min_{F}\left(
      \frac{m-|\boldsymbol d|-
        (N+1-|\boldsymbol d|)\dega(F)/h(x)}{n+1-\dim(F)}\right)\right)
  \end{equation*}
  o\`u $F$ d\'ecrit l'ensemble des sous-espaces stricts
  de $T_yW$ contenant $y$.
\end{prop}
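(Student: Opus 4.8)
The plan is to unwind the tangent metric through the universal torseur and reduce the computation, exactly as in the proof of Proposition~\ref{prop.6.1}, to linear algebra on $T_yW$ together with bookkeeping of Arakelov degrees. Write $E=\KK^{N+1}$ with the adelic metric of \S\ref{subsection.projectif}, let $D=\KK y\subset E$ be the line attached to $x$, and let $J_y\colon E\to\KK^m$ be the Jacobian at $y$ of the equations $f_1,\dots,f_m$ defining~$V$. Since $V$ is smooth, $J_y$ is surjective, so $T_yW=\ker J_y$ has dimension $n+1$; Euler's identity $\sum_jx_j\partial_jf_i=d_if_i$ shows $y\in T_yW$, i.e. $D\subset T_yW$. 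The Jacobian criterion identifies $T_xV$, viewed as a subbundle of $T_x\PP^N\cong D\dual\otimes(E/D)$, with $D\dual\otimes(T_yW/D)$, and — this is the point requiring care — by the way the metrics on $\PP^N$ and its sub/quotient bundles are built (tensor product, quotient, restriction of the standard metric on $E$; Examples~\ref{exem.4.4.e} and~\ref{exem.4.4.f}) this identification is isometric. Since $D\dual$ is a line, $F'\mapsto G$ defined by $F'=D\dual\otimes(G/D)$ is then an inclusion-preserving bijection between the subspaces $F'$ of $T_xV$ and the subspaces $G$ of $T_yW$ containing $D$, restricting to a bijection between the strict subspaces of $T_xV$ and the strict subspaces $G\supseteq D$ of $T_yW$.

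Next I would do the degree bookkeeping. Additivity of $\dega$ in $0\to D\to G\to G/D\to 0$ for the restriction/quotient metrics (Example~\ref{exem.4.4.f}), together with $\dega(D\dual\otimes B)=-\dim(B)\,\dega(D)+\dega(B)$ for a line $D$, gives, for the subspace $F'$ attached to $G$,
\[\dega(F')=\dega(G)-\dim(G)\,\dega(D),\qquad\dim(F')=\dim(G)-1;\]
taking $G=T_yW$ yields $h(x)=\dega(T_xV)=\dega(T_yW)-(n+1)\,\dega(D)$. I would then pass to the minimal slope by duality: by Remark~\ref{rema.pente.dualite}, $\mu_{\min}(T_xV)=-\mu_{\max}(T_xV\dual)$, and $\mu_{\max}$ of an adelic lattice is the supremum of $\dega/\dim$ over its nonzero subspaces; dualizing $0\to F'\to T_xV\to T_xV/F'\to 0$ turns this into
\[\mu_{\min}(T_xV)=\min_{F'\subsetneq T_xV}\frac{h(x)-\dega(F')}{n-\dim(F')}.\]
Substituting the two displayed relations, the denominator becomes $n+1-\dim(G)$, the $\dega(D)$ terms collapse, and one gets $\mu_{\min}(T_xV)=-\dega(D)+\min_{F}\bigl(\dega(T_yW)-\dega(F)\bigr)\big/\bigl(n+1-\dim(F)\bigr)$, where $F$ runs over the strict subspaces of $T_yW$ containing~$y$. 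Since $h(x)>0$, the first asserted formula is just $l(x)=\tfrac{n}{h(x)}\max\bigl(0,\mu_{\min}(T_xV)\bigr)$, which is the definition of freedom.

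For the asymptotic form I would use adjunction: $\omega_V^{-1}\cong\mathcal O_{\PP^N}(N+1-|\boldsymbol d|)\big|_V$, so $h(x)=-(N+1-|\boldsymbol d|)\,\dega(D)+O(1)$, the $O(1)$ coming from Lemma~\ref{lemm.5.2} (two adelic metrics on one line bundle differ boundedly) and from $\dega(D)=-h_{\mathcal O(1)}(x)$ for the metric of \S\ref{subsection.projectif}. Combining with $\dega(T_yW)=h(x)+(n+1)\dega(D)=(|\boldsymbol d|-m)\,\dega(D)+O(1)$ (using $n+1=N-m+1$) and $\dega(D)=-h(x)\big/(N+1-|\boldsymbol d|)+O(1)$, one substitutes into the exact formula, factors $h(x)\big/(N+1-|\boldsymbol d|)$ out of $\mu_{\min}(T_xV)$, and uses that $\max(0,\cdot)$ and $u\mapsto u/(n+1-\dim F)$ are Lipschitz with bounded constants: all the unscaled $O(1)$ errors become $O(1/h(x))$ after division by $h(x)$, which is precisely the second asserted formula.

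The routine-but-load-bearing step, and the one I expect to be the main obstacle, is the isometry $T_xV\cong D\dual\otimes(T_yW/D)$: one must verify that the metric induced on $T_V$ by the embedding $T_V\hookrightarrow T_{\PP^N}|_V$ (Example~\ref{exem.4.4.e}) coincides with the one obtained by restricting the description $T_{\PP^N}\cong D\dual\otimes(E/D)$ to the sub $D\dual\otimes(T_yW/D)$, and that $\dega$ is genuinely additive in the short exact sequences used, which at the archimedean places rests on the orthogonal splitting recorded in Example~\ref{exem.4.4.f}. Once this is in place, everything reduces to the same degree bookkeeping as in Proposition~\ref{prop.6.1}.
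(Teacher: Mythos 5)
Your proposal is correct and follows essentially the same route as the paper: identify $T_xV$ isometrically with $D\dual\otimes T_yW/(D\dual\otimes D)$, set up the bijection $F'\leftrightarrow F\supseteq D$, use $\dega(F')=\dega(F)-\dim(F)\dega(D)$ and $h(x)=\dega(T_yW)-(n+1)\dega(D)$ to express $\mu_{\min}(T_xV)$ as a minimum over strict subspaces of $T_yW$ containing $y$, then get the second expression from $\omega_V^{-1}\cong\mathcal O_V(N+1-|\boldsymbol d|)$, which gives $h(x)=-(N+1-|\boldsymbol d|)\dega(D)+O(1)$. The only differences are cosmetic (you justify the minimal-slope formula via duality with Remark~\ref{rema.pente.dualite} and spell out the isometry and the uniformity in $F$ of the $O(1)$ errors, points the paper leaves implicit), so nothing further is needed.
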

\begin{rema}
  Par la remarque~\ref{rema.espaces.minima}, \`a un terme en $O(\frac 1{h(x)})$
  pr\`es, il suffit de consid\'erer les hyperplans~$F$ de $T_yW$. La condition
  $l(x)<\varepsilon(B)$ se traduit donc essentiellement
  par l'existence d'un hyperplan
  $F\subset T_yW$ tel que
  \[-\dega(F)\leq\left(\frac{\varepsilon(B)}n+
  \frac{|\boldsymbol d|-1-n}{N+1-|\boldsymbol d|}\right)h(x).\]
  Cette condition peut \^etre vue comme l'existence d'un \og petit\fg\
  vecteur dans le r\'eseau dual du r\'eseau $T_yW\cap \ZZ^{N+1}$.
\end{rema}
\begin{proof}
  Notons $E=\KK^{n+1}$ et~$D$ la droite de l'espace vectoriel~$E$ correspondant au point~$x$.
  L'espace tangent $T_xV$ s'identifie alors au quotient
  de l'espace ${D\dual\otimes T_yW}$
  par $D\dual\otimes D$. Soit $F'$ un sous-espace de $T_xV$.
  Il existe un unique sous-espace~$F$ de $T_yW$ contenant~$D$
  tel que~$F'$ corresponde par l'identification pr\'ec\'edente au quotient
  de $D\dual\otimes F$ par $D\dual\otimes D$. Par cons\'equent
  \[\dega(F')=\dega(F)+\dim(F)\dega(D\dual)=\dega(F)-\dim(F)\dega(D).\]
  Or $h(x)$ vaut $\dega(T_xV)=\dega(T_yW)-(n+1)\dega(D)$.
  Par cons\'equent, la pente minimale $\mu_n(T_xV)$ est donn\'ee par la formule
  \[\begin{split}&\min_F\left(\frac{\dega(T_yW)-(n+1)\dega(D)
   -(\dega(F)-\dim(F)\dega(D))}{n-(\dim(F)-1)}\right)\\
  &=\min_F\left(\frac{\dega(T_yW)-\dega(F)}{n+1-\dim(F)}\right)
  -\dega(D).
  \end{split}\]
  En divisant par $h(x)/n$, on obtient l'expression de la libert\'e.
  Pour la deuxi\`eme expression, il suffit de remarquer
  que  \[|h(x)+(N+1-|\boldsymbol d|)\dega(D)|\] est major\'e par une constante
  puisque le fibr\'e anticanonique est isomorphe \`a
  $\mathcal O_V(N+1-|\boldsymbol d|)$.
\end{proof}
\begin{rema}
  Contrairement au cas de l'espace projectif,
  on ne peut \'evidemment pas esp\'erer, en g\'en\'eral,
  trouver de minimum absolu strictement
  positif pour la libert\'e.
  Par exemple, $\PP^1\times\PP^1$ se r\'ealise comme une quadrique d\'eploy\'ee
  et on a vu que la borne inf\'erieure de la libert\'e dans ce cas
  est nulle. De m\^eme, si on consid\`ere une surface cubique,
  les points sur les $27$ droites de la surface ont forc\'ement
  une libert\'e nulle \`a l'exception pr\`es d'un nombre fini d'entre eux.
\end{rema}
\begin{prop}
  Soit
  $Q$ une quadrique projective lisse de dimension $n\geq 3$
  sur~$\QQ$,
  Alors pour tout application~$\varepsilon$ de l'ensemble~$\mathcal D$,
  on a l'\'equivalence
  \[\card Q(\QQ)_{H\leq B}^{\elibre}\sim C(Q)B\]
  quand $B$ tend vers $+\infty$.
\end{prop}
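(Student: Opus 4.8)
The plan is to combine the Hardy–Littlewood circle method for smooth quadrics with the contribution estimate for points of small freedom, exactly as was done for projective space. By the compatibility of the circle method with the initial Batyrev–Manin principle (\cite[\S1.4]{fmt:fano}, \cite[corollaire~5.4.9]{peyre:fano}), one already has $\card Q(\QQ)_{H\leq B}\sim C(Q)B$ for a quadric of dimension $n\geq 3$, with $\Pic$ of rank $1$ so that no $\log$-factor appears. Since the set $Q(\QQ)_{H\leq B}^{\elibre}$ is obtained from $Q(\QQ)_{H\leq B}$ by removing the points with $l(x)<\varepsilon(B)$, it suffices to show that
\[
\card\{\,x\in Q(\QQ)\mid H(x)\leq B,\ l(x)<\varepsilon(B)\,\}=o(B)
\]
as $B\to+\infty$, for a suitable $\varepsilon\in\mathcal D$. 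By Proposition~\ref{prop.5.2}, changing the adelic metric alters $l(x)$ by $O(1/h(x))$, so it is enough to do this for the induced metric coming from the embedding $Q\subset\PP^{n+1}_\QQ$ with the metric of \S\ref{subsection.projectif}; then Proposition~\ref{prop.cercle.liberte} (with $m=1$, $d_1=2$, $N=n+1$, $|\boldsymbol d|=2$) applies.

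First I would translate the condition $l(x)<\varepsilon(B)$ using Proposition~\ref{prop.cercle.liberte} and the remark following it: up to a bounded error it amounts to the existence of a subspace $F\subsetneq T_yW$ containing $y=$ (a lift of $x$ in the affine cone $W$) with
\[
-\dega(F)\leq\Bigl(\tfrac{\varepsilon(B)}{n}+\tfrac{1-n}{n}\Bigr)h(x),
\]
and by Remark~\ref{rema.espaces.minima} it suffices to take $F$ a hyperplane, i.e. to produce a ``small'' vector in the dual of the lattice $T_yW\cap\ZZ^{n+2}$. Geometrically, $T_yW$ is the tangent space at $y$ to the affine quadric cone, which is the hyperplane in $\QQ^{n+2}$ orthogonal (for the quadratic form defining $Q$) to $y$; a hyperplane $F\subset T_yW$ of controlled degree corresponds to a codimension-$2$ rational subspace of $\QQ^{n+2}$ through $y$, i.e. to a line $\ell$ in $\PP^{n+1}$ through $x$ whose defining data is small. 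The estimate $-\dega(F)\ll (\varepsilon(B)/n + (1-n)/n)\,\log H(x)$ forces $\log H(x)$ to be large relative to the ``height'' of that subspace, exactly as in Proposition~\ref{lemm.proj.variante}.

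Then I would run the Schanuel/Masser–Vaaler lattice-point count used in the proof of Proposition~\ref{lemm.proj.variante}: fix the codimension-$2$ subspace $\widetilde F\subset\QQ^{n+2}$ (there are $O(P^{n+2})$ of them with $-\dega(\widetilde F)\leq\log P$, by the Grassmannian estimate \cite[\S2]{fmt:fano}), and for each one count rational lines $x\in\PP(\widetilde F)\cap Q$ of bounded height lying on $Q$ — but now, crucially, these points lie on the lower-dimensional quadric $Q\cap\PP(\widetilde F)$ (of dimension $n-2$), for which the circle method or an induction again gives a count $O(B^{(\dim+\,\text{small})/(n)})$ that is genuinely a smaller power of $B$. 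Summing the product of the two estimates over codimensions and over the (polynomially many) subspaces, and optimizing as in Proposition~\ref{lemm.proj.variante}, should yield a bound $O(B^{1-c\,\varepsilon(B)})$ for some $c>0$; since $\varepsilon\in\mathcal D$ satisfies $\log(t)^{\alpha}\varepsilon(t)\to\infty$, one has $B^{1-c\varepsilon(B)}=o(B)$ (indeed $=o(B/\log(B)^{\alpha})$ for every $\alpha$). Combining with $\card Q(\QQ)_{H\leq B}\sim C(Q)B$ finishes the proof.

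The main obstacle is the second count: one needs a uniform upper bound, in the auxiliary subspace $\widetilde F$, for the number of rational points of bounded height on the quadric section $Q\cap\PP(\widetilde F)$, with the dependence on the height of $\widetilde F$ made explicit (the ``$B^{\dim/(n+1)}/\exp(-\dega \widetilde F)$'' shape). For large sections this is again the circle method, but those sections may be singular or even non-quadric intersections for special $\widetilde F$; handling the degenerate locus uniformly — or bypassing it by a cruder but uniform $O(B^{(\dim(\widetilde F)-1)/(n+1)})$-type bound analogous to the second term in the Masser–Vaaler estimate — is the delicate point. A convenient shortcut is to note that one only needs an upper bound that is $o(B)$ after multiplication by the number $O(P^{n+2})$ of subspaces, so very weak (non-optimal-constant) uniform bounds suffice, which one can get from the trivial projective-space count on $\PP(\widetilde F)$ combined with the degree-$2$ divisibility constraint, exactly mirroring the projective-space argument.
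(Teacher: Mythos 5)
Your proposal is correct in outline and follows essentially the same route as the paper: the main term comes from Birch via \cite[corollaire~5.4.9]{peyre:fano}, and the points with $l(x)<\varepsilon(B)$ are handled by translating this condition through la proposition~\ref{prop.cercle.liberte} into the existence of a hyperplane $F$ of $T_yW$ of small height, counting those subspaces by the Grassmannian estimate of \cite[\S2]{fmt:fano}, and then using a crude uniform bound for the number of points of bounded height lying on each $\PP(F)$. The ``shortcut'' you describe at the end is exactly what the paper does (each such subspace is simply said to contain at most $C'B^{\frac n{n+1}}$ points of height at most $B$), so the uniformity problem for degenerate quadric sections, which you present as the main obstacle, does not really arise: one never needs an asymptotic on the sections, only a trivial uniform upper bound with a power saving.

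Two corrections, however. First, your specialization of the degree bound cannot be right: with $m=1$, $|\boldsymbol d|=2$, $N=n+1$, the coefficient you quote gives $\bigl(\frac{\varepsilon(B)}{n}+\frac{1-n}{n}\bigr)h(x)$, which is negative for $n\geq 2$, whereas $-\dega(F)\geq 0$ for every subspace of $\QQ^{n+2}$ equipped with the standard lattice; taken literally, your condition would say that no point of large height has small freedom, which is false (the points on a line of $Q$ have freedom tending to $0$, by la proposition~\ref{prop.courbes}, since $T_Q$ restricted to a line admits a trivial quotient). The correct necessary condition for a hyperplane $F\subset T_yW$, and the one used in the paper, is $-\dega(F)\leq C+\frac{\varepsilon(B)}{n}h(x)$: in the general remark following la proposition~\ref{prop.cercle.liberte} the numerator should involve $m$ rather than $n$, and it vanishes for a quadric. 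With this bound the Grassmannian count gives $O\bigl(B^{\frac{n+2}{n}\varepsilon(B)}\bigr)$ admissible subspaces, and the total contribution of the small-freedom points is $O\bigl(B^{\frac{n}{n+1}+\frac{n+2}{n}\varepsilon(B)}\bigr)=o(B)$ because $\varepsilon(B)\to 0$. Second, a hyperplane of $T_yW$ corresponds to a codimension-two linear subspace $\PP(F)\cong\PP^{n-1}$ of $\PP^{n+1}$ through $x$, not to a line; this slip does not affect your subsequent count, which correctly works with $\PP(\widetilde F)$ and the section $Q\cap\PP(\widetilde F)$.
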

\begin{proof}
  Compte tenu de \cite{peyre:fano}[corollaire 5.4.9] qui
  repose sur le r\'esultat tr\`es g\'en\'eral
  de Birch~\cite{birch:forms},
  il suffit de d\'emontrer que dans ce cas particulier,
  le nombre de points~$x$ de la quadrique v\'erifiant $H(x)\leq B$
  et $l(x)<\varepsilon(B)$ sont en nombre n\'egligeable.
  Dans ce cas particulier, 
  la formule de la proposition~\ref{prop.cercle.liberte}
  donne
  \[l(x)\geq 1+\min_F\left(\frac{-1-n\dega(F)/h(x)}{n+1-\dim(F)}\right)
    +O\left(\frac 1{h(x)}\right).\]
  Soit $W\subset \Aff_\QQ^{n+1}\setminus\{0\}$ le c\^one \'epoint\'e au-dessus
  de la quadrique~$Q$.
  Soit $y$ un repr\'esentant de~$x$ dans $\QQ^{n+1}$.
  La condition $l(x)<\varepsilon(B)$ implique donc l'existence
  d'un sous-espace vectoriel $F$ de $T_yW$
  de codimension $1$ dans cet espace tel que
  \[-\dega(F)\leq C+\frac 1n\epsilon(B)\log(B).\]
  En reprenant le raisonnement fait pour d\'emontrer la
  proposition~\ref{lemm.proj.variante}, le nombre de tels sous-espaces
  est major\'e \`a une constante pr\`es
  par $B^{\frac{n+2}n\varepsilon(B)}$ et chacun d'entre eux contient
  au plus $C'B^{\frac{n}{n+1}}$ points~$x$ avec $H(x)<B$, ce qui
  permet de conclure.
\end{proof}
\begin{rema}
  Malheureusement, cette preuve ne s'\'etend pas directement
  au cas du degr\'e $d\geq 3$.
\end{rema}
%% End of file 07-03_cercle.tex
%% Including file 08-01_accumule.tex
\section{Compatibilit\'e avec les contre-exemples}
L'objectif de cette partie de l'article est de v\'erifier que
la condition sur la libert\'e d\'etecte bien les mauvais points.
Nous allons donc passer en revue un certain nombre de contre-exemples
connus \`a la question initiale de Batyrev et Manin et analyser la libert\'e
des points rationnels des vari\'et\'es faiblement accumulatrices dans
chacun de ces cas.

\subsection{Rappels sur les parties accumulatrices}
Commen\c cons par rappeler quelques notions concernant
les sous-ensembles accumulateurs.
Pour les en\-sem\-bles minces,
on \'etend la d\'efinition de Serre \cite[\S9.1]{serre:mordellweil}
de la fa\c con suivante:
\begin{defi}
  Soit~$V$ une bonne vari\'et\'e sur le corps de nombres~$\KK$,
  une partie \emph{mince}
  de $V(\KK)$ est une partie~$M$ telle qu'il existe un
  morphismes de vari\'et\'es $\pi:X\to V$ qui v\'erifie
  les deux conditions suivantes:
  \begin{conditions}
  \item La partie~$M$ est contenue dans l'image $\pi(X(\KK))$;
  \item La fibre de~$\pi$ au point g\'en\'erique est finie et
    l'application~$\pi$ n'a pas de section rationnelle.
  \end{conditions}
\end{defi}
\begin{rema}
  Il convient de noter
  qu'avec cette d\'efinition, l'ensemble des points rationnels
  d'une courbe elliptique est mince. En effet en choisissant un
  syst\`eme de repr\'esentants $(P_1,\dots,P_k)$
  du quotient fini $E(\KK)/2E(\KK)$,
  on d\'efinit l'application $\pi:\coprod_{i=1}^kE\to E$
  qui envoie un point~$P$ de la $i$-\`eme copie de~$E$
  sur $2P+P_i$.
\end{rema}
  D'apr\`es
  \cite[\S13.1, th\'eor\`eme 3]{serre:mordellweil},
  dans l'espace projectif, la contribution du nombre de points
  de hauteur born\'ee
  d'un ensemble mince est n\'egligeable. Du point de vue
  du programme de Batyrev et Manin, un ensemble mince qui ne
  v\'erifie pas cela est pathologique. D\'efinissons cela plus
  pr\'ecis\'ement.
\begin{defi}
  Soit $V$~une bonne vari\'et\'e sur~$\KK$ munie d'une
  m\'etrique ad\'elique
  et soit~$T$ une partie mince non vide de $V(\KK)$. On dit que~$T$ est
  \emph{faiblement accumulatrice} si pour tout ouvert
  $U$ de~$V$ pour la topologie de Zariski
  qui rencontre l'adh\'erence de $T$, il existe un
  ouvert de Zariski non vide~$W$ de $V(\KK)$ tel
  que
  \[\mathop{\overline \lim}_{B\to+\infty}
  \frac{\card(T\cap U(\KK))_{H\leq B}}{\card(W(\KK))_{H\leq B}}>0,\]
  la limite sup\'erieure \'etant consid\'er\'ee dans $\RR\cup\{+\infty\}$.
\end{defi}
Les contre-exemples connus \`a la question initiale de Batyrev
et Manin proviennent d'ensembles
minces faiblement accumulateurs qui sont denses pour la topologie
de Zariski.
%% End of file 08-01_accumule.tex
%% Including file 08-02_surfaces.tex
\subsection{Les surfaces}
Dans le cas des surfaces, les ensembles accumulateurs connus sont
donn\'es par des \emph{courbes exceptionnelles},
que nous d\'efinissons ici comme les courbes rationnelles
lisses d'auto-intersection
n\'egative. Nous parlerons de \emph{belle surface} pour une belle vari\'et\'e
de dimension~$2$.

\begin{prop}
  Soit~$S$ une belle surface sur $\KK$ et soit $L$ une courbe
  exceptionnelle de~$S$. Alors $l(x)=0$ pour tout
  point $x$ de $L(\KK)$ en dehors d'un nombre fini.
\end{prop}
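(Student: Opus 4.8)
The plan is to reduce the statement to an assertion about the restriction of the tangent bundle $T_S$ to the exceptional curve $L$, and then to apply Proposition~\ref{prop.courbes}. Let $\varphi:\PP^1_\KK\to S$ be a parametrisation of $L$ (after a finite extension, which is harmless by the stability of freedom under field extension, or directly since $L$ is a smooth rational curve over $\KK$ one may work with the normalisation). First I would compute $\varphi^*(T_S)$. There is a short exact sequence of vector bundles on $\PP^1_\KK$,
\[
0\longrightarrow T_{\PP^1_\KK}\longrightarrow \varphi^*(T_S)\longrightarrow N\longrightarrow 0,
\]
where $N$ is the normal bundle of $L$ in $S$; since we are on a surface, $N$ is a line bundle, and $\deg N = L\cdot L < 0$ because $L$ is exceptional (auto-intersection négative). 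On $\PP^1$ this sequence splits, so $\varphi^*(T_S)\cong \mathcal O_{\PP^1_\KK}(2)\oplus\mathcal O_{\PP^1_\KK}(L\cdot L)$. Hence, in the notation preceding Proposition~\ref{prop.courbes}, $\mu_1(\varphi)=2$ and $\mu_2(\varphi)=L\cdot L<0$, so in particular $\mu_n(\varphi)=\mu_{\min}(\varphi)<0$ (here $n=2$).

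**Concluding via Proposition~\ref{prop.courbes}.** Now apply assertion a) of Proposition~\ref{prop.courbes} to the non-constant morphism $\varphi:\PP^1_\KK\to S$: since $\mu_n(\varphi)<0$, we get $l(\varphi(x))=0$ for every $x$ in $\PP^1(\KK)$ outside a finite subset. Because $\varphi$ induces a bijection between $\PP^1(\KK)$ and $L(\KK)$ up to finitely many points (it is the normalisation of a smooth rational curve, hence an isomorphism onto $L$), this says exactly that $l(y)=0$ for all $y\in L(\KK)$ outside a finite set, which is the claim. The underlying mechanism, unwound, is Lemma~\ref{lemm.5.2} applied to the two adelic metrics on $\varphi^*(T_S)$ (the one pulled back from $S$ and the one coming from the splitting): it gives $\mu_{\min}(y)=\mu_{\min}^{\varphi^*(T_S)}(y)=(L\cdot L)\,h_1(y)+O(1)$, which is negative as soon as $h_1(y)$ is large, and freedom vanishes precisely when the minimal slope is negative (cf.\ the remark that $l(x)=0$ iff $\mu_{\min}(x)<0$).

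**Main obstacle.** The only genuinely delicate point is the identification $\varphi^*(T_S)\cong\mathcal O(2)\oplus\mathcal O(L\cdot L)$, i.e.\ checking that $L$ is a smooth rational \emph{curve} so that $\varphi$ can be taken an immersion (or at least that the normalisation is an isomorphism onto $L$ and $N_{L/S}$ is a genuine line bundle of the expected degree); this is where smoothness of $L$ and the adjunction/self-intersection computation on the surface enter, and it is essentially formal once one recalls $\deg\varphi^*(T_S)=\deg\varphi^*(\omega_S^{-1})=-\omega_S\cdot L=2-2g(L)-L\cdot L+\ldots$, more directly $\deg\varphi^*(T_S)=2+L\cdot L$ via the exact sequence above. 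A secondary point is the passage between $L(\KK)$ and $\PP^1(\KK)$, but since $L$ is smooth this is an isomorphism, so no points are lost beyond the finite exceptional set already produced by Proposition~\ref{prop.courbes}. Everything else is a direct invocation of the machinery (Proposition~\ref{prop.courbes}, itself resting on Lemma~\ref{lemm.5.2}) established earlier.
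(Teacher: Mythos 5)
Your argument is correct and takes essentially the same route as the paper: both reduce to showing $\mu_2(\varphi)<0$ for an isomorphism (or parametrisation) $\varphi:\PP^1_\KK\to L$ and then invoke Proposition~\ref{prop.courbes}, the degree identity $\deg\varphi^*(T_S)=2+L\cdot L<2$ being obtained by adjunction in the paper and by the normal bundle sequence in your write-up. The only cosmetic difference is that the paper never needs the splitting $\varphi^*(T_S)\cong\mathcal O_{\PP^1_\KK}(2)\oplus\mathcal O_{\PP^1_\KK}(L\cdot L)$ which you assert without justification (it does hold here, since $H^1(\PP^1_\KK,\mathcal O(2-L\cdot L))=0$ because $L\cdot L<0$): it only uses that $T\varphi$ yields a nonzero map $\mathcal O_{\PP^1_\KK}(2)\to\varphi^*(T_S)$, whence $\mu_1(\varphi)\geq 2$ and therefore $\mu_2(\varphi)<0$.
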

\begin{proof}
  Par la formule d'adjonction, on a la relation
  \[\deg(\omega_L)=L.L+L.\omega_S\]
  o\`u le point d\'esigne le degr\'e d'intersection.
  Comme $L$ est une courbe rationnelle exceptionnelle,
  on en d\'eduit que $L.\omega_S^{-1}<2$. Fixons un
  isomorphisme $\varphi:\PP^1_\KK\to L$.
  On obtient l'in\'egalit\'e
  $\mu_1(\varphi)+\mu_2(\varphi)\leq 2$. Mais l'application $T\varphi$
  fournit un morphisme non nul de $\mathcal O_{\PP^1_\KK}(2)$ dans
  $\varphi^*(TS)$, ce qui prouve que $\mu_1(\varphi)\geq 2$
  et donc $\mu_2(\varphi)<0$. On applique alors
  la proposition~\ref{prop.courbes}.
\end{proof}
\begin{rema}
  La proposition indique que pour toute application~$\varepsilon$
  appartenant \`a~$\mathcal D$,
  le cardinal de l'intersection de l'ensemble
  $S(\KK)_{H\leq B}^{\elibre}$ avec $L(\KK)$ est major\'e par
  une constante ind\'ependante de~$B$.
\end{rema}
%% End of file 08-02_surfaces.tex
%% Including file 08-03_fibrations.tex
\subsection{Les fibrations}
Rappelons que le contre-exemple de V.~V. Batyrev et Y.~Tschinkel
\cite{batyrevtschinkel:counter} repose sur le fait g\'eom\'etrique
que, dans une fibration, le rang du groupe de Picard d'une
fibre peut varier en \'etant \'eventuellement sup\'erieur
au rang du groupe de Picard de la fibre g\'en\'erique,
auquel cas le nombre de points sur chacune de ces fibres peut
faire appara\^\i tre une puissance de $\log(B)$ sup\'erieure
\`a celle attendue pour l'ensemble de la vari\'et\'e.
Nous allons maintenant voir que le fait d'imposer en outre
une minoration sur la libert\'e borne la hauteur des points consid\'er\'es
dans une fibre donn\'ee si bien qu'il n'y a plus de contradiction
entre le nombre de points esper\'e dans une fibre et
celui esp\'er\'e pour la vari\'et\'e.

\begin{prop}
  \label{prop.fibration}%
  Soit~$X$ et~$Y$ de belles vari\'et\'es de dimensions respectives~$m$
  et~$n$ avec $n<m$. Soit $\varphi:X\to Y$
  un morphisme dominant. Soit~$\varepsilon$ une application de $\mathcal D$.
  Il existe une constante~$C$ telle que,
  pour tout point rationnel~$y$ de~$Y$
  qui n'est pas une valeur critique pour~$\varphi$,
  tout point~$x$ de $X_y(\KK)_{H\leq B}^{\elibre}$ v\'erifie
  \[H(x)\leq \min(B,CH(y)^{\frac{m}{n\varepsilon(B)}}).\]
\end{prop}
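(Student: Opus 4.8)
The plan is to combine the inequality of Proposition~\ref{prop.inegalite.morph.var} applied to the restriction $\varphi|_{X_y}$—which is trivial since $X_y$ maps to the point $y$, so the ``morphism'' to use is rather the inclusion $X_y\hookrightarrow X$ together with $\varphi$—with the defining inequality $l(x)\geq\varepsilon(B)$ that characterises points of $X_y(\KK)^{\elibre}_{H\leq B}$. More precisely, for $y$ a regular value the fibre $X_y$ is a smooth closed subvariety of $X$ of dimension $m-n$, and the inclusion $\iota\colon X_y\to X$ induces at every point $x\in X_y$ an injection of tangent spaces $T_x\iota\colon T_xX_y\hookrightarrow T_xX$. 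Applying Lemma~\ref{lemm.maj.morph} to the dual surjection $T_xX\dual\twoheadrightarrow T_xX_y\dual$ (equivalently Proposition~\ref{prop.inegalite.morph.var} in the ``other direction'') yields a constant $C_0\geq0$, independent of $y$ and $x$, controlling $\mu_{\min}$ of $T_xX_y$ in terms of that of $T_xX$. In fact the cleanest route is: the composite $T_xX_y\hookrightarrow T_xX$ has, for almost all places, $1$-Lipschitz image, so $\dega(T_xX_y)\leq\dega(T_xX)+C_1$ with $C_1$ uniform; and since $h_{X_y}(x)=\dega(T_xX_y)$ while $H(x)$ in the statement is computed on $X$ via $\dega(T_xX)$, we get $h_{X_y}(x)\leq h_X(x)+C_1$.

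Next I would translate the freedom condition on $X$ into information usable on the fibre. The key point is that the freedom $l(x)$ appearing in $X_y(\KK)^{\elibre}_{H\leq B}$ is the freedom \emph{of $X$} at $x$ (the fibre inherits its adelic data by restriction, but the superscript $\elibre$ in the statement refers to the ambient metric on $X$—this must be read from the definitions). From $l(x)\geq\varepsilon(B)$ and $l(x)=n_X\mu_{\min}(x)/h_X(x)$ with $n_X=m$, we obtain
\[\mu_{\min}(T_xX)\geq\frac{\varepsilon(B)}{m}\,h_X(x).\]
On the other hand, write $TX|_{X_y}\cong TX_y\oplus N$ where $N$ is (adelically, up to a uniform bounded distortion by Lemma~\ref{lemm.maj.morph} applied to the canonical maps) the pullback $\iota^*\varphi^*TY$; this is the normal-bundle splitting. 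By Lemma~\ref{lemm.pente.produit} applied fibrewise, $\mu_{\min}(T_xX)\leq\mu_{\min}(T_xX_y)$ up to a uniform constant, \emph{and also} $\mu_{\min}(T_xX)\leq\mu_{\min}(N(x))=\mu_{\min}(T_{\varphi(x)}Y)$ up to a uniform constant; but $y$ is fixed, so $\mu_{\min}(T_yY)$ is a fixed real number, and if $h_X(x)$ is large this forces a contradiction unless $\varepsilon(B)h_X(x)/m$ stays bounded by something. The honest inequality is the reverse one: since $TX_y$ is a \emph{quotient-compatible} summand, $\mu_{\min}(T_xX)\le\mu_{\min}(T_xX_y)+C_2$, whence
\[\frac{\varepsilon(B)}{m}\,h_X(x)\;\le\;\mu_{\min}(T_xX)\;\le\;\mu_{\min}(T_xX_y)+C_2\;\le\;\mu(T_xX_y)+C_2\;=\;\frac{h_{X_y}(x)}{m-n}+C_2.\]
Combining with $h_{X_y}(x)\le h_X(x)+C_1$ gives
\[\frac{\varepsilon(B)}{m}\,h_X(x)\;\le\;\frac{h_X(x)+C_1}{m-n}+C_2,\]
which, for $h_X(x)$ large, is impossible once $\varepsilon(B)\,(m-n)/m>1$, but since $\varepsilon(B)\le 1<m/(m-n)$ this never happens—so the bound must come out as a genuine ceiling on $h_X(x)$, namely $h_X(x)\bigl(\varepsilon(B)/m-1/(m-n)\bigr)\le$ const, which is vacuous. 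Hence the argument as drafted is \emph{too lossy}, and the actual proof must instead exploit the normal-bundle summand $N(x)\cong T_yY(\ldots)$: its slopes are $O(1)$ (as $y$ is fixed), and by Lemma~\ref{lemm.pente.produit}, $\mu_{\min}(T_xX)=\min\bigl(\mu_{\min}(T_xX_y),\mu_{\min}(N(x))\bigr)+O(1)=\min\bigl(\mu_{\min}(T_xX_y),O(1)\bigr)+O(1)$. Thus $\mu_{\min}(T_xX)\le C_3$ for a uniform $C_3$, so $l(x)\ge\varepsilon(B)$ forces $h_X(x)\le mC_3/\varepsilon(B)$, i.e. $H(x)\le e^{mC_3/\varepsilon(B)}$; comparing with $H(x)\le B$ gives $H(x)\le\min(B,CH(y)^{m/(n\varepsilon(B))})$ once one re-expresses $C_3$ in terms of $\dega$ of the canonical bundle of $Y$ at $y$ (which is $(\,\text{up to }O(1)\,)$ proportional to $h(y)$ via the fixed $\mathcal O_Y$-line bundle relating $\omega_Y^{-1}$ and the chosen height on $Y$), yielding the $H(y)^{m/(n\varepsilon(B))}$ shape with exponent $m/(n\varepsilon(B))$.

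The main obstacle, and the step I would spend the most care on, is the \emph{uniformity in $y$} of every ``$O(1)$'' above: the constants coming from Lemma~\ref{lemm.maj.morph} and Proposition~\ref{prop.maj.morph} are uniform over $X(\KK)$ by construction, but I must check that the splitting $TX|_{X_y}\cong TX_y\oplus\iota^*\varphi^*TY$, which only holds after choosing a (local, non-canonical) splitting of the tangent sequence $0\to TX_y\to TX|_{X_y}\to\iota^*\varphi^*TY\to0$, can be made with adelic distortion bounded independently of $y$—this follows from spreading out $\varphi$ over a model and using that the locus of critical values is closed, so over a dense open $U\subseteq Y$ the sequence splits with uniformly bounded distortion, and the finitely many remaining fibres over $Y\setminus U$ are handled (or simply excluded, as $y$ is required to be a regular value). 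The second delicate point is pinning down the precise exponent $m/(n\varepsilon(B))$ rather than merely $C/\varepsilon(B)$: this requires tracking that $\mu_{\min}(N(x))$ equals, up to $O(1)$, the minimal slope of $T_{\varphi(x)}Y$, hence is $O(1)+\frac{1}{n}h_Y(y)\cdot(\text{something}\le 0?)$—in fact one uses $\mu_{\min}(T_yY)\le\mu(T_yY)=h_Y(y)/n$ and the Minkowski-type lower bound (Remark~\ref{rema.espaces.minima}) $\mu_{\min}(T_yY)\ge -\,$const, so $|\mu_{\min}(N(x))|$ is controlled by $h_Y(y)/n+$const; feeding this into $l(x)\ge\varepsilon(B)$ gives $\varepsilon(B)h_X(x)/m\le h_Y(y)/n+$const, hence $h_X(x)\le\frac{m}{n\varepsilon(B)}h_Y(y)+\frac{m\,\text{const}}{\varepsilon(B)}$, and exponentiating produces exactly the claimed $CH(y)^{m/(n\varepsilon(B))}$ (the additive constant becoming the multiplicative $C$, after absorbing $\varepsilon(B)\le1$). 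Taking the minimum with the standing hypothesis $H(x)\le B$ completes the proof.
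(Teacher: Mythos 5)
Your final chain of inequalities, namely $\frac{\varepsilon(B)}{m}h(x)\le\mu_{\min}(T_xX)\le\mu_{\min}(T_yY)+O(1)\le \frac{h(y)}{n}+O(1)$, hence $h(x)\le\frac{m}{n\varepsilon(B)}h(y)+O(1/\varepsilon(B))$, is in substance the paper's argument, which consists exactly of Proposition~\ref{prop.inegalite.morph.var} applied to $\varphi$ (legitimate at every $x\in X_y$ because $y$ is not a critical value, so $T_x\varphi$ is surjective there, and with a constant uniform over $X(\KK)$) together with the remark $l(y)\le 1$. But the route you actually take to the key uniform bound $\mu_{\min}(T_xX)\le\mu_{\min}(T_yY)+O(1)$ has a genuine gap: you derive it from a decomposition $TX|_{X_y}\cong TX_y\oplus\iota^*\varphi^*TY$ and Lemma~\ref{lemm.pente.produit}. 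The tangent sequence of a fibre does not split in general, and Lemma~\ref{lemm.pente.produit} computes the slopes of the \emph{orthogonal} direct sum of Example~\ref{exem.fibre.somme}, so the $O(1)$ distortion you invoke is precisely the uniformity in $(x,y)$ that you yourself flag as ``the main obstacle''. Your proposed repair does not close it: the open $U\subseteq Y$ over which a bounded splitting might be arranged has complement a proper closed subvariety, in general with infinitely many rational points, and lying over $Y\setminus U$ is not the same thing as $y$ being a critical value; those fibres are neither ``finitely many'' nor automatically excluded by the regular-value hypothesis. Moreover a uniform two-sided comparison via a splitting is genuinely doubtful, since a non-critical fibre may pass $w$-adically close to critical points of neighbouring fibres.

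The machinery is also unnecessary, because the inequality you need is a statement about the \emph{surjection} $T_x\varphi:T_xX\to T_yY$, not about a splitting: Lemma~\ref{lemm.maj.morph}, applied once and for all to $T\varphi:TX\to\varphi^*(TY)$ over $X$, gives a constant $C$ uniform over $X(\KK)$ such that $\mu_{\min}(T_xX)\le\mu_{\min}(T_{\varphi(x)}Y)+C$ at every point where $T_x\varphi$ is surjective; this is Proposition~\ref{prop.inegalite.morph.var}, which you cite at the outset and then set aside. Combined with $l(y)\le1$ and $l(x)\ge\varepsilon(B)$ it yields $h(x)\le\frac{m}{n\varepsilon(B)}\bigl(h(y)+nC\bigr)$, which is the whole proof. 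Note finally that your last step (``the additive constant becoming the multiplicative $C$, after absorbing $\varepsilon(B)\le1$'') is incorrect as written: the additive term is of size $\mathrm{const}/\varepsilon(B)$, and exponentiating it produces a factor $e^{\mathrm{const}/\varepsilon(B)}$ which is unbounded as $B\to+\infty$ since $\varepsilon(B)\to0$; the constant has to be placed inside the power, giving $H(x)\le\bigl(CH(y)\bigr)^{m/(n\varepsilon(B))}$, which is the form in which the bound is actually exploited afterwards.
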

\begin{proof}
  Cela d\'ecoule de la proposition~\ref{prop.inegalite.morph.var}
  et du fait que la libert\'e de $y$ est major\'ee par~$1$.
\end{proof}
\begin{rema}
  Le minimum est donn\'e par le deuxi\`eme terme d\`es que
  \[H(y)<\left(\frac BC\right)^{\frac{n\varepsilon(B)}m}.\]
\end{rema}
\begin{calc}
  Nous allons maintenant tenter d'expliquer comment cet\-te proposition
  apporte conjecturalement une r\'eponse au contre-exemple de V.~V. Batyrev
  et Y.~Tschinkel. Ce contre-exemple est donn\'e
  par l'hypersurface~$X$ de $\PP^3_\QQ\times\PP^3_\QQ$ d'\'equation
  $\sum_{i=0}^3Y_iX_i^3=0$. Notons $\varphi:V\to\PP^3_\QQ$ la
  seconde projection. Pour tout point ${y=(y_0:y_1:y_2:y_3)}$ de
  $\PP^3(\QQ)$ on d\'esigne
  par $X_y$ la fibre correspondante qui est une surface cubique non
  singuli\`ere si $\prod_{i=0}^3y_i\neq 0$.
  L'application $H:X(\QQ)\to\RR$ d\'efinie
  par
  \[H(x,y)=H_3(x)H_3(y)^3,\]
  o\`u $H_3$ est d\'efinie dans l'exemple~\ref{exem.proj.naif},
  est une hauteur sur~$V$ relative au fibr\'e anticanonique.
  On note $T$ l'ensemble des points $y\in\PP^3(\QQ)$
  en lesquels la fibre $X_y$ est non singuli\`ere et
  a un groupe de Picard de rang strictement sup\'erieur \`a $1$.
  Pour tout $y\in T$, on note $U_y$ le compl\'ementaire
  des $27$ droites de $X_y$. On fera l'\emph{hypoth\`ese}
  qu'il existe une constante $C_y>0$ v\'erifiant
  \[\card U_y(\QQ)_{H\leq B}\leq C_yB\log(B)^5\]
  pour $y\in T$ et $B\geq 2$ et qu'il existe un nombre
  $\delta>0$ tel que \[\sum_{\{y\in T\mid H(y)\geq B\}}C_y\leq B^{-\delta}.\]
  Cette hypoth\`ese est, \`a la connaissance de l'auteur,
  compatible avec le comportement attendu pour le nombre
  de points de hauteur born\'ee sur une surface cubique.
  Choisissons $0<\eta<\frac 35$.
  Sous les hypoth\`eses pr\'ec\'edentes, on obtient les majorations
  \[\begin{aligned}
  &\sum_{y\in T}\card U_y(\QQ)_{H\leq B}^{\elibre}\\
  &\leq\!
  \sum_{\{y\in T\mid H(y)\leq B^{\eta\varepsilon(B)}\}}
  C_y(CH(y)^{\frac5{3\varepsilon (B)}})\log(B)^5
  +\!\!\sum_{\{y\in T\mid H(y)\geq B^{\eta\varepsilon(B)}\}}\!\!C_yB\log(B)^5\\
  &\leq C' B^{\eta\frac53(1+\varepsilon(B))}\log(B)^5+
  C'B^{1-\eta\delta\varepsilon(B)}\log(B)^5,
  \end{aligned}\]
  pour une constante $C'$ convenable. Compte tenu de la
  condition (ii) introduite dans la d\'efinition~\ref{defis.epsilon.libre}
  pour l'ensemble~$\mathcal D$, le terme $B^{1-\eta\varepsilon(B)}\log(B)^5$
  est n\'egligeable devant~$B$. En prenant $\eta<\frac 35$, la condition
  (i) de cette d\'efinition assure que le premier terme est \'egalement
  n\'egligeable devant~$B$. Sous l'hypoth\`ese faite sur les fibres
  dont le groupe de Picard est grand, la condition de minoration
  de la libert\'e rendrait donc bien n\'egligeable la contribution
  de cet ensemble mince.
\end{calc}
\begin{rema}
  Il est important de noter que la libert\'e d'un point ne distingue
  pas les points dans les \og mauvaises\fg\ fibres, c'est-\`a-dire
  celles pour lesquelles le rang du groupe de Picard est strictement plus
  grand que~$1$. Du point de vue de la libert\'e, tout point de grande
  hauteur au-dessus d'un point de petite hauteur est consid\'er\'e comme
  \og mauvais\fg. De prime abord, on peut croire que c'est un
  d\'efaut de cet invariant. N\'eanmoins, l'appartenance \`a une
  fibre dont le groupe de Picard est grand n'est pas stable par
  extension de corps. En fait, de ce point de vue,
  tout point est potentiellement
  mauvais: il suffit de passer \`a une extension qui d\'eploie l'action
  du groupe de Galois sur le groupe de Picard de la fibre; a contrario,
  la libert\'e d'un point rationnel est stable par extension de corps.
\end{rema}
%% End of file 08-03_fibrations.tex
%% Including file 08-04_hilbert.tex
\subsection{Des exemples de C.~Le Rudulier}
Dans sa th\`ese, C.~Le Rudulier a construit de nouveaux contre-exemples
\`a la question initiale de V.~Batyrev et Y.~Manin \cite{rudulier:surfaces}.
Ces exemples sont des espaces de modules de Hilbert pour des surfaces.
\subsubsection{Le cas du produit de droites projectives}
Nous allons commencer par rappeler les d\'etails d'un de ces contre-exemple.
On consid\`ere la vari\'et\'e~$V$ d\'efinie comme le sch\'ema de Hilbert
des points de degr\'e deux sur la surface $S=\PP^1_\QQ\times\PP^1_\QQ$.
On note~$Y$ le produit sym\'etrique
$\Sym^2(S)$. La vari\'et\'e~$Y$
est singuli\`ere le long de l'image~$\Delta$ de la diagonale de $S^2$
et le morphisme de Hilbert-Chow $f:V\to Y$ est
une d\'esingularisation de~$Y$. On dispose \'egalement du morphisme
de projection $g:S^2\to Y$.
On note $Z=f^{-1}(\Delta)$.
L'ensemble $M=f^{-1}(g(S^2(\QQ)))-Z(\QQ)$ est une partie mince
dans~$V$ mais dense pour la topologie de Zariski.
D'autre part, on a un morphisme
\[p_1:\Sym^2(\PP^1_\QQ)\times \PP^1_\QQ\to V\]
provenant de l'application
\[(\PP^1_\QQ)^2\times\PP^1_\QQ\to(\PP^1_\QQ\times\PP^1_\QQ)^2\]
donn\'ee par $((x,y),z)\mapsto ((x,z),(y,z))$.
On obtient \'egalement un morphisme 
\[p_2:\PP^1_\QQ\times\Sym^2(\PP^1_\QQ)\to V\]
par sym\'etrie.
On notera~$Z'$ l'adh\'erence de la r\'eunion des
images de $p_1$ et $p_2$ dans~$V$.
L'ensemble $U_0=V\setminus Z\cup Z'$ est un
ouvert de Zariski non vide de~$V$.
Le th\'eor\`eme~5.1 de \cite{rudulier:surfaces}
contient le r\'esultat suivant:
\begin{theo}[C. Le Rudulier]
  Il existe un nombres r\'eel $c>0$ tel que
  pour tout ouvert non vide~$U$ de~$V$ contenu
  dans~$U_0$ on ait les \'equivalences
  \[\card(U(\QQ)\cap M)_{H\leq B}\sim cB\log(B)^3\]
  et
  \[\card(U(\QQ)\setminus M)_{H\leq B}\sim C(V)B\log^2(B),\]
  quand~$B$ tend vers~$+\infty$.
\end{theo}
\begin{rema}
  Comme le rang du groupe de Picard de~$V$ vaut~$3$,
  le terme de droite de la seconde \'equivalence correspond au
  comportement esp\'er\'e
  pour la version raffin\'ee du principe de Batyrev et Manin.
\end{rema}

Les points de l'ensemble mince~$M$ peuvent \^etres caract\'eris\'es de
la fa\c con suivante:
on consid\`ere le morphisme de vari\'et\'es
\[\Delta:\Sym^2(\PP^1_\QQ)=\PP^2_\QQ\longrightarrow \PP^2_\QQ\]
d\'efinie par $\Delta(a:b:c)=(a^2:b^2-4ac:c^2)$.
On obtient alors par composition un morphisme
qu'on note $\Delta_1$
\[V\longrightarrow\Sym^2(S)\longrightarrow(\Sym^2(\PP^1_\QQ))^2
\stackrel{\Delta\circ\pr_1}{\longrightarrow}\PP^2_\QQ.\]
On d\'efinit de m\^eme $\Delta_2:V\to \PP^2_\QQ$.
On note \'egalement $\square:\PP^2_\QQ\to\PP^2_\QQ$
l'application d\'efinie par ${(u:v:w)\mapsto(u^2:v^2:w^2)}$.
Les \'el\'ements de $M$
sont les \'el\'ements de $V(\QQ)$ dont
les images par $\Delta_1$ et $\Delta_2$ sont dans l'image de l'application
$\square$. Le calcul fait dans le paragraphe pr\'ec\'edent pour les
fibrations s'applique aussi bien \`a $\Delta_1$ qu'\`a $\Delta_2$,
si bien que la libert\'e d'un point d'une fibre fix\'ee de~$\Delta_1$
ou de $\Delta_2$ tend vers $0$.
\begin{rema}
  Cet argument repose de fa\c con cruciale sur le fait
  que le rang du groupe de Picard de $\PP^1_\QQ\times\PP^1_\QQ$ est
  strictement sup\'erieur \`a $1$. Cet argument peut \^etre
  \'etendu aux autres surfaces de Del Pezzo
  dont le groupe de Picard v\'erifie cette condition.
\end{rema}
\subsubsection{Le cas du plan projectif}
Dans ce paragraphe, la lettre~$V$ d\'esigne le sch\'ema de Hibert
des points de de degr\'e deux sur le plan projectif.
On note \'egalement~$Y$ le produit sym\'etrique $\Sym^2(\PP^2_\QQ)$
et~$f$ de~$V$ vers~$Y$ le morphisme de Hilbert-Chow qui est l'\'eclatement
de~$Y$ le long de l'image $\Delta$ de la diagonale. On d\'esigne
par $g$ la projection de $(\PP^2_\QQ)^2$ dans~$Y$
et par $U_0$ le compl\'ementaire de $f^{-1}(\Delta)$ dans~$V$.
Soit $M=f^{-1}(g(\PP^2(\QQ)^2))\cap U_0(\QQ)$.
L'ensemble $M$ est mince mais dense dans~$V$ pour
la topologie de Zariski.
D'apr\`es~\cite{rudulier:surfaces},
on peut choisir une hauteur~$H$ relative \`a $\omega_V^{-1}$ sur~$V$
de sorte qu'on ait la relation
\[H(P)=H_2(x)^3H_2(y)^3\]
pour tous $x,y\in\PP^2(\QQ)$ et tout $P\in V(\QQ)$ tel
que $f(P)=g(x,y)$. Rappelons l'enonc\'e du th\'eor\`eme~3.7
de~\cite{rudulier:surfaces}:
\begin{theo}[C.~Le Rudulier] \textup{a)} Pour tout ouvert~$U$ non vide
  de~$V$, on a l'\'equivalence
  \[\card(U(\QQ)\cap M)_{H\leq B}\sim \frac 8{\zeta(3)^2}B\log(B).\]
  lorsque $B$ tend vers $+\infty$.
  \par
  \textup{b)}
  On a l'\'equivalence
  \[\card(V(\QQ)\setminus M)_{H\leq B}\sim C(V)B\log(B).\]
\end{theo}
Nous allons en d\'emontrer un corollaire:
\begin{corr}\label{corr.rudulier.plan}
\textup{a)} Pour tout ferm\'e strict $F$ de~$V$ on a
  \[\card(F(\QQ)\cap M)_{H\leq B}=o(B\log(B)).\]
  \textup{b)}
  Pour tout ouvert non vide~$U$ de~$V$,
  il existe une hauteur $H$ relative \`a l'oppos\'e du fibr\'e canonique
  sur~$V$ telle que le quotient
  \[\frac{\card U(\QQ)_{H\leq B}}{C_H(V)B\log(B)}\]
  ne converge pas vers $1$ lorsque $B$ tend vers $+\infty$,
  o\`u $C_H(V)$ d\'esigne ici la constante empirique associ\'ee
  \`a la norme ad\'elique choisie sur le fibr\'e $\omega_V^{-1}$.
\end{corr}
\begin{listrems}
  \remarque La premi\`ere assertion signifie que la partie~$M$ n'est pas
  r\'eunion de sous-vari\'et\'es faiblement accumulatrices et ne
  peut donc pas \^etre d\'etect\'ee par une m\'ethode de r\'ecurrence
  sur la dimension.
  \remarque La seconde assertion implique que la formule (2.3.1)
  de \cite{peyre:fano} n'est pas v\'erifi\'ee, bien que la r\'eponse
  \`a la question initiale de Batyrev et Tschinkel soit positive dans ce cas.
\end{listrems}
\begin{proof}
  La premi\`ere assertion du corollaire est un cons\'e\-quen\-ce de l'assertion a)
  du th\'eor\`eme de C\'ecile Le Rudulier.
  Pour l'assertion b) nous allons nous inspirer d'une id\'ee de D. Loughran,
  en nous basant sur le lemme suivant:\noqed
\end{proof}
\begin{lemm}
  Il existe une partie ferm\'ee~$F$ de $V(\Adeles_\QQ)$ qui contient~$M$
  et qui est de mesure nulle pour toute mesure d\'efinie par une
  norme ad\'elique sur~$\omega^{-1}_V$.
\end{lemm}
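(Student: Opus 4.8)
The strategy is to build $F$ as a product of local conditions, one at each place, using the characterisation of $M$ via the morphisms $\Delta_1,\Delta_2\colon V\to\PP^2_\QQ$ and the squaring map $\square\colon\PP^2_\QQ\to\PP^2_\QQ$. Recall that a rational point $P$ lies in $M$ precisely when $\Delta_1(P)$ and $\Delta_2(P)$ both lie in $\square(\PP^2(\QQ))$, i.e.\ in the image of the degree-two morphism $\square$. First I would observe that for a single place $v$, the set $\square(\PP^2(\QQ_v))$ is a \emph{closed} subset of $\PP^2(\QQ_v)$ with empty interior: it is closed because $\square$ is a finite (hence proper) morphism and $\PP^2(\QQ_v)$ is compact, so the image of $\PP^2(\QQ_v)$ is compact, hence closed; and it has empty interior because the image of the squaring map is not all of $\PP^2(\QQ_v)$ — for $v=\infty$ one cannot have all three coordinates of the same sign unless\dots\ more robustly, $\QQ_v^\times/(\QQ_v^\times)^2$ is nontrivial at every place, so a Zariski-dense set of $\QQ_v$-points is missed. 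Since $\square$ is dominant, $\square(\PP^2(\QQ_v))$ is however \emph{Zariski}-dense, so it is not contained in any proper closed subvariety; but for the adelic measure we only need the local closed sets to have measure zero, which we get from the empty-interior property at archimedean places and from a congruence obstruction at a single well-chosen finite place.

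Concretely, I would proceed as follows. Fix an auxiliary odd prime $p$ (say $p=3$) and consider the reduction map $\pi_1\colon\PP^2(\QQ_p)\to\PP^2(\FF_p)$. The set $\square(\PP^2(\QQ_p))$ reduces into the set of $\FF_p$-points with square coordinates, which is a proper subset of $\PP^2(\FF_p)$; hence $\pi_1^{-1}$ of its complement is a nonempty open set disjoint from $\square(\PP^2(\QQ_p))$, and in particular $\square(\PP^2(\QQ_p))$ is contained in a closed set of $\oomega_{\PP^2_\QQ,p}$-measure at most $1-c_p<1$. Iterating over a finite set of primes, or more simply using that the $p$-adic density of squares in $\QQ_p$ is $<1$ for each $p$ and that these are independent conditions, one sees that
\[
F_{\PP^2}=\bigl\{\,z\in\PP^2(\Adeles_\QQ)\ \big|\ z \text{ is a square at every place}\,\bigr\}
\]
is closed and has $\oomega$-measure zero for any adelic metric on $\PP^2_\QQ$ (its $v$-component is closed with measure $<1$ at every place, and a product of infinitely many factors each bounded away from $1$ — in fact $\to 0$ along the finite places, by the Weil-type estimate recalled after Lemme~\ref{lemme.mesure.finie} — has measure zero). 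Then I would set
\[
F=\Delta_1^{-1}(F_{\PP^2})\cap\Delta_2^{-1}(F_{\PP^2})\subset V(\Adeles_\QQ),
\]
which is closed since $\Delta_1,\Delta_2$ are continuous on adelic points, and contains $M$ by the characterisation recalled above. Finally, $F$ has measure zero for any norm on $\omega_V^{-1}$: the morphisms $\Delta_i$ are dominant, so $\Delta_i^{-1}$ of a set whose $v$-component is closed with measure $<1$ again has $v$-component of measure $<1$ (a dominant morphism of smooth varieties does not concentrate the canonical measure onto the preimage of a measure-zero set at any place where it is submersive, and the critical locus is itself measure zero), and the infinite product over the finite places still forces the total measure to vanish.

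The main obstacle is controlling the behaviour at the finitely many "bad" places — the places of ramification of $\Delta_i$ and $\square$, and the archimedean place — where one must check that $\Delta_i^{-1}(F_{\PP^2,v})$ genuinely has local measure $<1$ rather than measure $1$ (it is a priori conceivable that $\Delta_i$ maps a full-measure subset of $V(\QQ_v)$ into the squares). Here I would use that $\Delta_i$ is dominant and that $F_{\PP^2,v}$ is contained in the preimage under reduction of a proper subset of the special fibre at a suitable place of good reduction for both $V$ and $\Delta_i$; away from the (measure-zero) ramification locus of $\Delta_i$ the map is smooth, so it pulls $\oomega_v$-measure-zero sets back to $\oomega_v$-measure-zero sets and, more quantitatively, pulls a closed set of measure $<1$ back to one of measure $<1$. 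This reduces everything to a single mod-$p$ computation at one good prime, together with the convergence of the relevant Euler product, which is precisely what Lemme~\ref{lemme.mesure.finie} and the Weil-conjecture estimate quoted after it provide; hence $\oomega_V(F)=0$ for every adelic metric, as claimed.
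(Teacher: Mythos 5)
Your starting point is borrowed from the wrong example: the characterisation of $M$ through $\Delta_1,\Delta_2\colon V\to\PP^2_\QQ$ and the squaring map $\square$ is the one the paper gives for $\Hilb^2(\PP^1_\QQ\times\PP^1_\QQ)$, where these maps are built from the two projections of $S=\PP^1_\QQ\times\PP^1_\QQ$ via $\Sym^2(S)\to(\Sym^2(\PP^1_\QQ))^2$. The lemma you are proving concerns $V=\Hilb^2(\PP^2_\QQ)$, for which no such pair of morphisms exists. The correct analogue here is that a point of $U_0(\QQ)$ lies in $M$ if and only if it lifts rationally through the degree-two cover $g\colon(\PP^2_\QQ)^2\to Y$, i.e.\ if and only if the discriminant of the binary quadratic form cutting out the pair on the line through it is a square in $\QQ$; your general scheme (impose the local ``square'' condition at every place and take the closed product set) can be transposed to that setting, but not with the maps you actually invoke.

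More seriously, the quantitative heart of the lemma is missing. Your argument rests on the principle that a dominant morphism \emph{pulls a closed set of local measure $<1$ back to a set of local measure $<1$}; this is false in general --- the squaring map $\PP^1\to\PP^1$ pulls the set of local squares, of measure $<1$, back to all of $\PP^1(\QQ_p)$ --- and it is precisely the danger you yourself flag (``a full-measure subset could land in the squares'') before dismissing it. What is needed, and what the paper actually proves, is that at each odd prime the set of $x\in V(\QQ_p)$ satisfying the local condition has density bounded away from $1$. The paper gets this by counting the three types of points of $V(\FF_p)$ (points over the diagonal, split pairs of $\FF_p$-points, irreducible degree-two points) and showing that the proportion of irreducible degree-two points tends to $\tfrac12$; since a point of $M$ never reduces to such a point, the closed set $F=V(\RR)\times V(\QQ_2)\times\prod_{p\neq 2}V_b(p)$ has local densities roughly $\tfrac12$, and the infinite product vanishes for any adelic norm on $\omega_V^{-1}$. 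Without this count (or an equivalent irreducibility/Lang--Weil argument for the double cover $(\PP^2_\QQ)^2\setminus\Delta\to V\setminus Z$), you do not know that your local factors are $<1$, let alone that their product is $0$. A smaller slip: the local densities do not tend to $0$ along the finite places, as your parenthesis claims via the Weil estimate; they tend to a constant strictly less than $1$, which is already enough to make the product diverge to zero.
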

\begin{proof}
  Le sch\'ema de Hilbert consid\'er\'e est d\'efini sur $\ZZ[\frac 12]$
  avec une bonne r\'eduction en tout nombre premier impair.
  Soit $p$ un nombre premier impair.
  Dans $V(\FF_p)$, il y a trois types de points:
  \begin{assertions}
  \item Ceux sur l'image inverse de $\Delta$;
  \item Ceux correspondant \`a des paires de points distincts
    de $\PP^2(\FF_q)$;
  \item Les points de degr\'e deux dans $\PP^2_{\FF_q}$.
  \end{assertions}
  On note $N_a(p)$, $N_b(p)$ et $N_c(p)$ le cardinal
  des trois ensembles correspondants. Ils sont donn\'es par
  \[N_a(p)=(1+p+p^2)(1+p),\quad
  N_b(p)=\frac{(1+p+p^2)(p+p^2)}2\]
  et
  \[N_c(p)=\frac{p^4-p}2.\]
  En particulier $N_b(p)/\card V(\FF_p)$ converge
  vers $1/2$ lorsque le nombre premier~$p$ tend
  vers $+\infty$. Notons $V_b(p)$, pour $p$ premier impair,
  l'ensemble des points de $V(\QQ_p)$ qui se r\'eduisent modulo~$p$
  en un point de type b). Alors l'ensemble
  \[F=V(\RR)\times V(\QQ_2)\times\prod_{p\not\in \{2,\infty\}}
  V_b(p).\]
  est une partie ferm\'ee de $V(\Adeles_\QQ)$ qui contient~$M$
  et de mesure nulle pour toute mesure induite par une norme ad\'elique
  sur $\omega_V^{-1}$.
\end{proof}
\begin{proof}[Fin de la preuve du corollaire~\ref{corr.rudulier.plan}]
  Soit~$U$ un ouvert non vide de~$V$. On raisonne par l'absurde
  en supposant que pour tout choix de norme le quotient converge vers~$1$.
  Par la d\'emonstration de l'assertion (c)
  de la  proposition (3.3) de \cite{peyre:fano},
  les points de $U(\QQ)$ v\'erifie la propri\'et\'e ($E_U$) de
  \cite[\S3]{peyre:fano}. En appliquant cela au compl\'ementaire
  du ferm\'e~$F$ donn\'e par le lemme pr\'ec\'edent, il en r\'esulte
  que le quotient
  \[\frac{\card(U(\QQ)\cap M)_{H\leq B}}{\card U(\QQ)_{H\leq B}}\]
  tend vers~$0$ quand $B$ tend vers~$+\infty$, ce qui contredit
  l'assertion a) du th\'eor\`eme de C.~Le Rudulier.
\end{proof}

En ce qui concerne les pentes, contrairement au cas pr\'ec\'edent,
on ne dispose pas d'un morphisme de vari\'et\'es $\varphi$ qui permette
d'utiliser la proposition~\ref{prop.fibration} et qui v\'erifie
en outre $\varphi(M)\neq\varphi(V(\QQ))$. Toutefois, l'application
qui \`a une paire de points distincts $\{P_1,P_2\}$ associe la droite
projective $(P_1P_2)$ d\'efinit un morphisme surjectif de vari\'et\'es
$V\to\PP^2_\QQ$ dont la fibre au-dessus d'une droite projective~$D$
est isomorphe \`a $\Hilb^2(D)$, c'est-\`a-dire \`a un plan projectif.
Compte tenu de la proposition~\ref{prop.fibration},
il existe une constante~$C$ de sorte que,
pour tout point~$x$ de~$M$ donn\'e par une paire $\{P_1,P_2\}$
contenue dans une droite $D$ du plan projectif,
la condition $l(x)>\varepsilon(B)$ implique
\[H(P_1)H(P_2)\leq CH(D)^{\frac 2{\varepsilon(B)}}.\]
Cette condition ne d\'ecoule pas d'une condition
de la forme $l(P_1,P_2)\geq \varepsilon'(B)$ o\`u $(P_1,P_2)$
est vu comme un point de $\PP^2(\QQ)^2$. Il n'y a donc plus
de contradiction directe entre le r\'esultat
connu pour $(\PP^2_\QQ)^2$
et celui esp\'er\'e pour $V$. Toutefois je n'ai pas r\'eussi
\`a d\'emontrer que le cardinal  $M_{H\leq B}^{\elibre}$ est
effectivement de la forme $o(B\log(B))$ ce qui prouverait que les pentes
permettent d'\'ecarter assez de mauvais points
dans ce cas particulier. Une \'etude plus approfondie de ce
cas m\'eriterait d'\^etre faite.
%% End of file 08-04_hilbert.tex
%% Including file 09_conclusion.tex
\section{Conclusion}
\`A l'issu de ce travail, l'auteur est persuad\'e que les
pentes de l'espace tangent donnent effectivement un indicateur
fid\`ele permettant de d\'etecter les points qui s'accumulent.
Toutefois il est encore difficile de pr\'edire avec
certitude quelle condition pr\'ecise se r\'ev\`elera la plus
efficace pour poursuivre le programme de Batyrev et Manin.
Deux crit\`eres de nature diff\'erente apparaissent ici:
la condition $l(x)>\epsilon(B)$, qui vient s'ajouter
\`a la condition de hauteur, et la condition
$\mu_{\max}(x)\leq\log(B)$.
Si le second point de vue am\`ene un changement de paradigme plus profond,
il peut \^etre plus naturel. Une troisi\`eme solution
consisterait \`a imposer une minoration sur la pente minimale
de la forme $\mu_{\min}(x)\geq\eta\log(\log(B))$. Cette
solution qui peut \^etre plus facile \`a v\'erifier
dans certains cas a l'inconv\'enient que l'ensemble des
\og bons\fg\ points diminue avec~$B$, elle n'a donc pas \'et\'e retenue
par l'auteur pour des raisons m\'etamath\'ematiques.
Seule l'\'etude d'autres cas permettra de trancher d\'efinitivement.
%% End of file 09_conclusion.tex
\let\bold\mathbf
\def\andname{et }
\def\comma{}
%% \bibliographystyle{amsext}
%% \bibliography{general}
%% Including file main.bbl
%% Unique keys chosen by makekeys
\ifx\undefined\bysame
\newcommand{\bysame}{\leavevmode\hbox to3em{\hrulefill}\,}
\fi
\ifx\undefined\numero
\newcommand{\numero}{$\hbox{n}^\circ$}
\fi
\ifx\undefined\andname
\newcommand{\andname}{and }
\fi
\ifx\undefined\comma
\newcommand{\comma}{,}
\fi

%% End of file main.bbl
\end{document}